\newtheorem{thm}{Theorem}[section]
\newtheorem{lem}[thm]{Lemma}
\newtheorem{prop}[thm]{Proposition}
\theoremstyle{definition}
\theoremstyle{remark}
\numberwithin{equation}{section}
\newcommand{\beas}{\begin{eqnarray*}}
\newcommand{\eeas}{\end{eqnarray*}}
\newcommand{\bes} {\begin{equation*}}
\newcommand{\ees} {\end{equation*}}
\newcommand{\be} {\begin{equation}}
\newcommand{\ee} {\end{equation}}
\newcommand{\bea} {\begin{eqnarray}}
\newcommand{\eea} {\end{eqnarray}}
\newcommand{\ra} {\rightarrow}
\newcommand{\txt} {\textmd}
\newcommand{\ds} {\displaystyle}
\newcommand{\la} {\lambda}
\begin{document}

\title[Segal-Bargmann transform and Paley-Wiener theorems on $\mathbb{HM}$]
 {Segal-Bargmann transform and Paley-Wiener theorems on Heisenberg motion groups}

\author{Suparna Sen}

\address{Department of Mathematics, Indian Institute of Science, Bangalore - 560012, India.}

\email{suparna@math.iisc.ernet.in.}

\thanks{The author was supported by Shyama Prasad Mukherjee Fellowship from Council of Scientific and Industrial Research, India.}


\begin{abstract}
We study the Segal-Bargmann transform on the Heisenberg motion
groups $\mathbb{H}^n \ltimes K,$ where $\mathbb{H}^n$ is the
Heisenberg group and $K$ is a compact subgroup of $U(n)$ such that
$(K,\mathbb{H}^n)$ is a Gelfand pair. The Poisson integrals
associated to the Laplacian for the Heisenberg motion group are
also characterized using Gutzmer's formulae. Explicitly realizing
certain unitary irreducible representations of $\mathbb{H}^n
\ltimes K,$ we prove the Plancherel theorem. A Paley-Wiener type
theorem is proved using complexified representations.
\vspace{0.1in}
\begin{flushleft}
MSC 2010 : Primary 22E30; Secondary 22E45. \\
\vspace*{0.1in}
Keywords : Segal-Bargmann transform, Poisson integrals, Paley-Wiener theorems. \\
\end{flushleft}
\end{abstract}

\maketitle

\section{Introduction}

For any $f \in L^2(\mathbb{R}^n),$ it is easy to see that
$f*\rho_t$ extends as an entire function to the whole of
$\mathbb{C}^n,$ where $\ds{\rho_t(x) = \frac{1}{(4\pi
t)^{\frac{n}{2}}} e^{\frac{-|x|^2}{4t}}}$ is the heat kernel on
$\mathbb{R}^n$ and the image of $L^2(\mathbb{R}^n)$ under the map
$\ds{f \rightarrow f * \rho_t}$ can be characterized as the
Hilbert space of entire functions on $\mathbb{C}^n$ which are
square integrable with respect to the positive weight
$\rho_{t/2}(y)dxdy.$

The mapping $\ds{f \rightarrow f * \rho_t}$ is called the
Segal-Bargmann transform, also known as the coherent state
transform or the heat kernel transform. Segal and Bargmann
independently proved in the 1960's in the context of quantum
mechanics that this transform is a unitary map from
$L^2(\mathbb{R}^n)$ onto $\ds{\mathcal{O}(\mathbb{C}^n)}$
$\ds{\bigcap L^2(\mathbb{C}^n, \mu)},$ where $\ds{d\mu(z) =
\frac{1}{(2\pi t)^{n/2}}}$ $\ds{e^{-\frac{|y|^2}{2t}} dx dy }$ and
$\mathcal{O}(\mathbb{C}^n)$ denotes the space of entire functions
on $\mathbb{C}^n.$

This transform has attracted a lot of attention in the recent
years mainly due to the work of Hall \cite{H} where a similar
result was established for an arbitrary compact connected Lie
group $K.$ He introduced a generalization of the Segal-Bargmann
transform on a compact connected Lie group. If $K$ is such a
group, this coherent state transform maps $L^2(K)$ isometrically
onto the space of holomorphic functions in $L^2(G,\mu_t),$ where
$G$ is the complexification of $K$ and $\mu_t$ is an appropriate
positive heat kernel measure on $G.$ The generalized coherent
state transform is defined in terms of the heat kernel on the
compact group $K$ and its analytic continuation to the complex
group $G.$ Similar results have been proved for compact symmetric
spaces by Stenzel in \cite{St}.

For the Heisenberg group $\mathbb{H}^n,$ Kr$\ddot{\txt{o}}$tz et
al proved in \cite{KTX} that the image of $L^2(\mathbb{H}^n)$
under the heat kernel transform is not a weighted Bergman space
with a non-negative weight, but can be considered as a direct
integral of twisted Bergman spaces. Similar results for non
compact symmetric spaces have been proved in \cite{HM} and
\cite{KOS}.

Next, consider the following result on $\mathbb{R}$ due to Paley
and Wiener. A function $f \in L^2(\mathbb{R})$ admits a
holomorphic extension to the strip $\{ x + iy : |y| < t \}$ such
that $$\ds{ \sup_{|y| \leq s} \int_\mathbb{R} |f(x+iy)|^2 dx <
\infty ~ \forall ~ s<t}$$ if and only if \bea \label{eq}
\int_{\mathbb{R}} e^{s|\xi|} |\widetilde{f}(\xi)|^2 d\xi < \infty
~ \forall ~ s < t\eea where $\widetilde{f}$ denotes the Fourier
transform of $f.$

The condition (\ref{eq}) is the same as $$ \int_{\mathbb{R}}
|\widetilde{e^{s{\Delta}^{\frac{1}{2}}}f}(\xi)|^2 d\xi < \infty ~~
\forall ~ s < t$$ where $\Delta$ is the Laplacian on $\mathbb{R}.$
This point of view was explored by R. Goodman in Theorem 2.1 of
\cite{G1}.

The condition (\ref{eq}) also equals $$ \int_{\mathbb{R}}
|e^{i(x+iy)\xi}|^2 |\widetilde{f}(\xi)|^2 d\xi < \infty ~~ \forall
~ |y| < t.$$  Here $\xi \mapsto e^{i(x+iy)\xi}$ may be seen as the
complexification of the parameters of the unitary irreducible
representations $\xi \mapsto e^{ix\xi}$ of $\mathbb{R}.$ The above
point of view was further developed by R. Goodman in \cite{G2}
(see Theorem 3.1). Similar results were established for the
Euclidean motion group $M(2)$ of the plane $\mathbb{R}^2$ in
\cite{NS} and in the context of general motion groups
$\mathbb{R}^n \ltimes K,$ where $K$ is a compact subgroup of
$SO(n)$ in \cite{S}. Aim of this paper is to prove results
analogous to the above three, for the Heisenberg motion groups $
\mathbb{HM} = \mathbb{H}^n \ltimes K,$ where $\mathbb{H}^n$ is the
Heisenberg group and $K$ is a compact subgroup of $U(n)$ such that
$(K,\mathbb{H}^n)$ is a Gelfand pair.

The plan of this paper is as follows : In the following section
the range of the Segal-Bargmann transform on $\mathbb{HM}$ is
characterized as a direct integral of weighted Bergman spaces. The
third section is devoted to a study of Poisson integrals on
$\mathbb{HM}$ by using a Gutzmer formula for compact Lie groups
established by Lassalle in 1978 (see \cite{L}) and a Gutzmer
formula on $\mathbb{C}^{2n}.$ This is modelled after the work of
Goodman \cite{G1}. In the final section we prove the Plancherel
theorem on $\mathbb{HM}$ thereby listing the unitary, irreducible
representations on which Plancherel measure rests. Then we prove a
Paley-Wiener type theorem which characterizes functions extending
holomorphically to the complexification of $\mathbb{HM}$ which is
an analogue of Theorem 3.1 of \cite{G2}.

\vspace{0.8 in}

\section{Segal-Bargmann transform}

In this section we want to study the Segal-Bargmann transform on
the Heisenberg motion group. We recall that, for the Heisenberg
group $\mathbb{H}^n,$ it was proved by Kr$\ddot{\txt{o}}$tz et al
in \cite{KTX} that the image of $L^2(\mathbb{H}^n)$ under the heat
kernel transform is not a weighted Bergman space with a
non-negative weight, but can be considered as a direct integral of
twisted Bergman spaces. Here we prove that a similar result is
true for Heisenberg motion groups as well.

\subsection{Segal-Bargmann transform for the special
Hermite semigroup}\hspace{0.2 in}

Let $\mathbb{H}^n = \mathbb{C}^n \times \mathbb{R}$ be the
Heisenberg group with the group operation defined by $$
(z,t)\cdot(w,s) = (z+w, t+s+\frac{1}{2}Im(z \cdot \overline{w}))
~~ \txt{ where } z, w \in \mathbb{C}^n, ~ t,s \in \mathbb{R}.$$
Alternatively, we can consider $\mathbb{H}^n$ as $\mathbb{R}^n
\times \mathbb{R}^n \times \mathbb{R}$ with the group law
$$(x,y,t)(u,v,s) = (x+u,y+v, t+s+\frac{1}{2}(u \cdot y - v \cdot
x)) \txt{ for } x,y,u,v \in \mathbb{R}^n, ~ t,s \in \mathbb{R}.$$
The center of $\mathbb{H}^n$ is $\{ (0,0,t) : t \in \mathbb{R}\}.$

Let $\lambda \in \mathbb{R},$ $\lambda \neq 0.$ For suitable
functions $f$ on $\mathbb{H}^n,$ let us define a function
$f^{\lambda}$ on $\mathbb{R}^{2n}$ by $$ f^{\lambda}(x,u) =
\int_{\mathbb{R}} f(x,u,t) e^{i \lambda t} dt .$$ For $f,g \in
L^1(\mathbb{R}^{2n}),$ the $\lambda$-twisted convolution of $f$
and $g$ is defined by $$ f*_{\lambda}g(x,u) =
\int_{\mathbb{R}^{2n}} f(x',u') g(x-x',u-u')
e^{-i\frac{\lambda}{2} (u \cdot x' - x \cdot u')} dx' du'.$$ Then,
we have for Schwartz functions $f,g \in \mathcal{S}(\mathbb{H}^n)
= \mathcal{S}(\mathbb{R}^{2n+1}),$ $$ (f*g)^{\lambda} =
f^{\lambda} *_{\lambda} g^{\lambda}$$ where $*$ denotes the group
convolution on $\mathbb{H}^n.$

Let $\mathcal{L}$ denote the sublaplacian on the Heisenberg group
defined by $$ \mathcal{L} = -\sum_{j=1}^n (X_j^2 + Y_j^2) $$ where
$X_j, Y_j , j = 1, 2, \cdots , n$ together with
$\ds{T=\frac{\partial}{\partial t}}$ forms a basis for the
Heisenberg Lie algebra. For the explicit expressions for these
vector fields we refer to \cite{T}. The heat kernel for
$\mathcal{L}$ is denoted by $p_t$ and its inverse Fourier
transform in the central variable is explicitly given by (see
\cite{KTX}) $$ p_t^{\lambda}(x,u) = (4\pi)^{-n} \left(
\frac{\lambda}{\sinh \lambda t} \right)^n
e^{-\frac{\lambda}{4}\coth (\lambda t) (|x|^2 + |u|^2)}.$$ It can
be shown that $p_t^{\la}$ is the heat kernel associated to the
special Hermite operator $L_{\la}.$ That is, $e^{-tL_{\la}}f = f
*_{\la} p_t^{\la},$ for $f \in L^2(\mathbb{R}^{2n}).$ Define a
positive weight function $\mathcal{W}_t^{\lambda}$ on
$\mathbb{C}^{2n}$ by
$$ \mathcal{W}_t^{\lambda}(x+iy,u+iv) = 4^n e^{\la(u \cdot y - v
\cdot x)} p_{2t}^{\la}(2y,2v) ~~ \txt{ where }~~ x,y,u,v \in
\mathbb{R}^n.$$ Let $\mathcal{B}_t^{\la}(\mathbb{C}^{2n})$ be the
weighted Bergman space defined as $$
\mathcal{B}_t^{\la}(\mathbb{C}^{2n}) = \{ F \txt{ entire on }
\mathbb{C}^{2n} :  \int_{\mathbb{C}^{2n}} |F(z,w)|^2
\mathcal{W}_t^{\lambda}(z,w) dz dw < \infty \}.$$

\begin{thm}
The map $e^{-tL_{\la}} : f \ra f * p_t^{\la},$ called the
$\la$-twisted heat kernel transform, is a unitary operator from
$L^2(\mathbb{R}^{2n})$ to $\mathcal{B}_t^{\la}(\mathbb{C}^{2n}).$
\end{thm}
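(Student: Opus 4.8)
The plan is to diagonalize the $\la$-twisted heat kernel transform on the special Hermite basis and then compute directly against the weight $\mathcal{W}_t^\la$. Throughout I may assume $\la > 0$, the case $\la < 0$ being entirely analogous. Let $\Phi_{\alpha\beta}^\la,$ $\alpha,\beta \in \mathbb{N}^n,$ denote the special Hermite functions on $\mathbb{R}^{2n}$ attached to $L_\la$ (see \cite{T}, \cite{KTX}); after a suitable normalization these form an orthonormal basis of $L^2(\mathbb{R}^{2n})$ consisting of eigenfunctions $L_\la \Phi_{\alpha\beta}^\la = (2|\beta|+n)\la\, \Phi_{\alpha\beta}^\la,$ and they satisfy the twisted convolution relations $\Phi_{\alpha\beta}^\la *_\la \Phi_{\mu\nu}^\la = c_\la\, \delta_{\beta\mu}\, \Phi_{\alpha\nu}^\la.$ Since $e^{-tL_\la}f = f *_\la p_t^\la,$ the eigenfunction property yields $\Phi_{\alpha\beta}^\la *_\la p_t^\la = e^{-(2|\beta|+n)\la t}\, \Phi_{\alpha\beta}^\la,$ so the transform acts diagonally on the basis.

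First I would establish that $f *_\la p_t^\la$ extends to an entire function on $\mathbb{C}^{2n}.$ Using the explicit Gaussian form of $p_t^\la,$ one replaces $(x,u)$ by $(z,w) = (x+iy, u+iv)$ and analytically continues the quadratic form in the exponent; the integrand retains Gaussian decay in the real integration variables, uniformly on compact subsets of $\mathbb{C}^{2n},$ so Morera's theorem (or differentiation under the integral sign) gives holomorphy in each variable. In particular the extended basis functions $\Phi_{\alpha\beta}^\la(z,w)$ are entire.

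The heart of the proof is the isometry. Writing $f = \sum_{\alpha,\beta} \langle f, \Phi_{\alpha\beta}^\la \rangle\, \Phi_{\alpha\beta}^\la,$ diagonality gives $e^{-tL_\la}f = \sum_{\alpha,\beta} e^{-(2|\beta|+n)\la t}\, \langle f, \Phi_{\alpha\beta}^\la \rangle\, \Phi_{\alpha\beta}^\la,$ so that $\|e^{-tL_\la}f\|_{\mathcal{B}_t^\la}^2$ reduces, upon expanding, to the weighted inner products $\int_{\mathbb{C}^{2n}} \Phi_{\alpha\beta}^\la(z,w)\, \overline{\Phi_{\mu\nu}^\la(z,w)}\, \mathcal{W}_t^\la(z,w)\, dz\, dw$ of the extended basis functions. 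The key computation is to show that these vanish unless $(\alpha,\beta) = (\mu,\nu)$ and equal $e^{2(2|\beta|+n)\la t}$ when they agree. This is a Gaussian integral in the imaginary variables $(y,v)$: the factor $4^n e^{\la(u \cdot y - v \cdot x)} p_{2t}^\la(2y,2v)$ defining $\mathcal{W}_t^\la$ is tailored so that integrating the product of two holomorphically extended special Hermite functions against it reproduces the $L^2(\mathbb{R}^{2n})$ orthogonality while contributing exactly the factor $e^{2(2|\beta|+n)\la t}$ that cancels the two semigroup factors $e^{-(2|\beta|+n)\la t} e^{-(2|\nu|+n)\la t}.$ Carrying this out gives $\|e^{-tL_\la}f\|_{\mathcal{B}_t^\la}^2 = \sum_{\alpha,\beta} |\langle f, \Phi_{\alpha\beta}^\la \rangle|^2 = \|f\|_{L^2(\mathbb{R}^{2n})}^2,$ so $e^{-tL_\la}$ is an isometry into $\mathcal{B}_t^\la(\mathbb{C}^{2n}).$ For surjectivity, the same computation shows $\{ e^{-(2|\beta|+n)\la t}\Phi_{\alpha\beta}^\la \}$ is an orthonormal system in $\mathcal{B}_t^\la(\mathbb{C}^{2n})$ spanning the image; completeness follows from a reproducing-kernel argument, since point evaluations on $\mathcal{B}_t^\la(\mathbb{C}^{2n})$ are continuous, its reproducing kernel is $\sum_{\alpha,\beta} e^{-2(2|\beta|+n)\la t}\, \Phi_{\alpha\beta}^\la(z,w)\, \overline{\Phi_{\alpha\beta}^\la(z',w')},$ and an entire function of $\mathcal{B}_t^\la(\mathbb{C}^{2n})$ orthogonal to every $\Phi_{\alpha\beta}^\la$ must vanish identically.

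The main obstacle I expect is the explicit Gaussian (Gutzmer-type) computation in the isometry step: matching the twisting phase $e^{\la(u \cdot y - v \cdot x)}$ and the Gaussian $p_{2t}^\la(2y,2v)$ in the weight against the holomorphic extension of $\Phi_{\alpha\beta}^\la\, \overline{\Phi_{\mu\nu}^\la}$ so that the exponential factors cancel precisely. Everything else, namely the holomorphic extension, the diagonalization, and the reproducing-kernel surjectivity argument, is comparatively routine once this identity is in hand.
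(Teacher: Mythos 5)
The paper itself gives no proof of this theorem, deferring entirely to Kr\"otz--Thangavelu--Xu \cite{KTX}, and your outline reproduces in substance exactly the argument given there: diagonalization of $e^{-tL_{\la}}$ on the special Hermite basis, the key orthogonality relation $\int_{\mathbb{C}^{2n}}\Phi^{\la}_{\alpha\beta}\,\overline{\Phi^{\la}_{\mu\nu}}\,\mathcal{W}_t^{\la}\,dz\,dw=\delta_{\alpha\mu}\delta_{\beta\nu}e^{2(2|\beta|+n)|\la|t}$ (obtained by first integrating in $(x,u)$ against the phase $e^{\la(u\cdot y-v\cdot x)}$, which is the same mechanism as the paper's own Theorem 3.4, and then in $(y,v)$ against $4^{n}p_{2t}^{\la}(2y,2v)$), and surjectivity via density of the analytically continued basis. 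The only step you treat more lightly than the source is the final completeness claim that an $F\in\mathcal{B}_t^{\la}(\mathbb{C}^{2n})$ orthogonal to all $\Phi^{\la}_{\alpha\beta}$ vanishes --- in \cite{KTX} this requires an actual argument (Fubini gives $F(\cdot+i(y,v))\in L^2(\mathbb{R}^{2n})$ for a.e.\ $(y,v)$, and one compares special Hermite coefficients across slices using holomorphy) --- but that is a matter of detail rather than of approach.
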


For more details and the proof see \cite{KTX}.

\subsection{Laplacian and heat kernel on Heisenberg motion groups}\hspace{0.5 in}

Let $K$ be a compact, connected Lie subgroup of
$Aut(\mathbb{H}^n),$ such that $(K,\mathbb{H}^n)$ is a Gelfand
pair. By this we mean that the convolution algebra of
$K$-invariant $L^1$-functions on $\mathbb{H}^n$ is commutative. A
maximal compact connected group of automorphisms of $\mathbb{H}^n$
is given by the unitary group $U(n)$ acting on $\mathbb{H}^n$ via
$k(z,t) = (kz,t).$ Conjugating by an automorphism of
$\mathbb{H}^n$ if necessary, we can always assume that $K \subset
U(n).$ It is well known that $(U(n),\mathbb{H}^n)$ is a Gelfand
pair and there are many proper subgroups $K$ of $U(n)$ for which
$(K,\mathbb{H}^n)$ form a Gelfand pair.

Let $\mathbb{HM}$ be the semidirect product of $\mathbb{H}^n$ and
$K$ with the group law $$(x,y,t,k)(u,v,s,h) = ((x,u,t) \cdot
(k\cdot(u,v),s),kh) \txt{ where } (x,y,t), (u,v,s) \in
\mathbb{H}^n; k, h \in K.$$ $\mathbb{HM}$ is called the Heisenberg
motion group. For $K=U(n),$ $\mathbb{HM}^n = \mathbb{H}^n \ltimes
U(n)$ is more commonly known as the Heisenberg motion group.
However, in this paper by a Heisenberg motion group $\mathbb{HM},$
we shall mean $\mathbb{H}^n \ltimes K.$ Points in $\mathbb{HM}$
will be denoted by $(x,y,t,k)$ where $(x,y,t) \in \mathbb{H}^n$
and $k \in K.$

Let $K_1, K_2, \cdots, K_N$ be an orthonormal basis of the Lie
algebra $\underline{k}$ of $K.$ In the Heisenberg motion group
$\mathbb{HM},$ we have $2n+1+N$ one parameter subgroups given by
\beas G_j &=& \{ (te_j,0,0,I) : t \in \mathbb{R} \} \\
G_{n+j} &=& \{ (0,te_j,0,I) : t \in \mathbb{R} \} \\
G_{2n+1} &=& \{ (0,0,t,I) : t \in \mathbb{R} \} \\
G_{2n+1+l} &=& \{ (0,0,0,\exp{(tK_l)}) : t \in \mathbb{R} \} \\
\eeas where $1 \leq j \leq n,$ $1 \leq l \leq N$ and $e_j$ are the
co-ordinate vectors in $\mathbb{R}^n.$ Corresponding to these one
parameter subgroups we have $2n+1+N$ left invariant vector fields
$X_1, X_2, \cdots, X_{2n+1+N},$ which form a basis of the Lie
algebra of $\mathbb{HM}.$ The Laplacian $\Delta$ on $\mathbb{HM}$
is given by $$\ds{\Delta = -(X_1^2 + X_2^2 + \cdots +
X_{2n+1+N}^2).}$$

Let $Sp(n,\mathbb{R})$ denote the symplectic group consisting of
order $2n$ real matrices with determinant one that preserve the
symplectic form $\ds{[(x,u),(y,v)] = u \cdot y - v \cdot x}.$ Let
$O(2n,\mathbb{R})$ be the orthogonal group of order $2n.$ Define
$M=Sp(n,\mathbb{R}) \bigcap O(2n,\mathbb{R}).$ Then there is a one
to one correspondence between $M$ and the unitary group $U(n).$
Let $k=a+ib$ be an $n \times n$ complex matrix with real and
imaginary parts $a$ and $b.$ Then $k$ is unitary if and only if
the matrix $\ds{\left ( \begin{matrix} a & -b \\ b & a \\
\end{matrix} \right) \in M.}$ A simple computation using the
above and the fact that $K \subset U(n)$ shows that
$$\ds{\Delta = - \Delta_{\mathbb{H}^n} - \Delta_K}$$ where
$\ds{\Delta_{\mathbb{H}^n}} = \sum_{j=1}^{2n+1} X_j^2$ and
$\ds{\Delta_{K}}$ are the Laplacians on $\mathbb{H}^n$ and $K$
respectively.

Since $\ds{\Delta_{\mathbb{H}^n}}$ and $\ds{\Delta_K}$ commute, it
follows that the heat kernel $\psi_t$ associated to $\Delta$ is
given by the product of the heat kernels $k_t$ on $\mathbb{H}^n$
and $q_t$ on $K.$ In other words \beas \psi_t(x,u,\xi,k) &=&
k_t(x,u,\xi) q_t(k) \\ &=& \left( (4\pi)^{-n} \int_{\mathbb{R}}
e^{-i \lambda \xi} e^{-t{\lambda}^2} p_t^{\lambda}(x,u)d\lambda
\right) \left( \sum_{\pi \in \widehat{K}} d_\pi
e^{-\frac{\lambda_{\pi}t}{2}} \chi_{\pi}(k) \right). \eeas Here,
for each unitary, irreducible representation $\pi$ of $K,$
$d_{\pi}$ is the degree of $\pi,$ $\lambda_{\pi}$ is such that
$\pi(\ds{\Delta_K}) = - \lambda_{\pi} I$ and $\chi_{\pi}(k) = tr
(\pi(k))$ is the character of $\pi.$ For more details, see
\cite{H}.

\subsection{Segal-Bargmann transform}\hspace{0.5in}

Denote by $G$ the complexification of $K.$ Let $\kappa_t$ be the
fundamental solution at the identity of the following equation on
$G :$ $$ \frac{du}{dt} = \frac{1}{4} \Delta_G u,$$ where
$\Delta_G$ is the Laplacian on $G,$ for details please see
\cite{H}. It should be noted that $\kappa_t$ is the real, positive
heat kernel on $G$ which is not the same as the analytic
continuation of $q_t$ on $K.$

Define $\mathcal{A}_t^{\la}(\mathbb{C}^{2n} \times G)$ to be the
weighted Bergman space $$\mathcal{A}_t^{\la}(\mathbb{C}^{2n}
\times G) = \{ F \txt{ entire on } \mathbb{C}^{2n} \times G :
\int_G \int_{\mathbb{C}^{2n}} |F(z,w,g)|^2
\mathcal{W}_t^{\lambda}(z,w) dz dw d\nu(g) < \infty \}$$ where $$
d\nu(g) = \int_K \kappa_t(xg)dx \textmd{ on } G.$$

We now introduce a measurable structure on $\ds{\bigsqcup_{\la
\neq 0} \mathcal{A}_t^{\la}(\mathbb{C}^{2n} \times G)}.$ By a
section $s$ of $\ds{\bigsqcup_{\la \neq 0}
\mathcal{A}_t^{\la}(\mathbb{C}^{2n} \times G)}$ we mean an
assignment \beas s : \mathbb{R}^* &\ra& \ds{\bigsqcup_{\la \neq 0}
\mathcal{A}_t^{\la}(\mathbb{C}^{2n} \times G)} \\
\la &\ra& s_{\la} \in \mathcal{A}_t^{\la}(\mathbb{C}^{2n} \times
G).\eeas Now we define a direct integral of Hilbert spaces by $$
\int_{\mathbb{R}^*}^{\oplus} \mathcal{A}_t^{\la}(\mathbb{C}^{2n}
\times G) e^{2t\la^2} d\la = \left\{ s : \mathbb{R}^* \ra
\ds{\bigsqcup_{\la \neq 0} \mathcal{A}_t^{\la}(\mathbb{C}^{2n}
\times G)} \txt{ such that } s \txt{ is measurable and } \right.$$
$$ \left. \|s\|^2 = \int_{\mathbb{R}^*} \|s_{\la}\|_{\la}^2 e^{2t\la^2}
d\la < \infty \right\}$$ where $\| \cdot \|_{\la}$ denotes the
norm in $\ds{\mathcal{A}_t^{\la}(\mathbb{C}^{2n} \times G)}.$
Clearly this is a Hilbert space.

For suitable functions $f$ on $\mathbb{HM},$ let us define a
function $f^{\lambda}$ on $\mathbb{R}^{2n} \times K$ by $$
f^{\lambda}(x,u,k) = \int_{\mathbb{R}} f(x,u,t,k) e^{i \lambda t}
dt.$$ Notice that this definition is consistent with the one used
earlier for functions on $\mathbb{H}^n$ (i.e. for right
$K$-invariant functions on $\mathbb{HM}$). Then we have the
following theorem :

\begin{thm}\label{sbt}
If $f \in L^2(\mathbb{HM}),$ then $f*\psi_t$ extends
holomorphically to $\mathbb{C}^{2n+1} \times G.$
\begin{itemize} \item [(a)] The image of $L^2(\mathbb{HM})$ under
the Segal-Bargmann transform $f \ra f * \psi_t$ cannot be
characterized as a weighted Bergman space with a non-negative
weight. \item [(b)] For every $t>0,$ the Segal-Bargmann transform
$\ds{e^{-t\Delta} : L^2(\mathbb{HM}) \ra }$ $\ds{\bigsqcup_{\la
\neq 0} \mathcal{A}_t^{\la}(\mathbb{C}^{2n}}$ $\ds{\times G),}$ $f
\ra (f*\psi_t)^{\la}$ is an isometric isomorphism.\end{itemize}
\end{thm}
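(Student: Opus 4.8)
The plan is to reduce everything to the partial Fourier transform in the central variable and then invoke the two unitarity results already on hand: Theorem 1.1 for the $\lambda$-twisted heat kernel transform on $\mathbb{R}^{2n}$, and Hall's theorem in its $K$-averaged form, whose target measure is exactly the $d\nu$ used to define $\mathcal{A}_t^{\lambda}(\mathbb{C}^{2n}\times G)$. First I would compute $(f*\psi_t)^{\lambda}$ directly from the group law. Using $(w,s,h)^{-1}(z,\xi,k)=(h^{-1}(z-w),\xi-s-\tfrac12\operatorname{Im}(w\cdot\bar z),h^{-1}k)$, the $K$-invariance of the Heisenberg heat kernel $k_t$ (so that $k_t(h^{-1}(z-w),\cdot)=k_t(z-w,\cdot)$), and the fact that $h\in K\subset U(n)$ preserves $\operatorname{Im}(w\cdot\bar z)$, the change of variable $\xi\mapsto\xi-s-\tfrac12\operatorname{Im}(w\cdot\bar z)$ followed by integration in $\xi$ and $s$ yields
\[
(f*\psi_t)^{\lambda}(z,k)=c\,e^{-t\lambda^2}\int_K\big(f^{\lambda}(\cdot,h)*_{\lambda}p_t^{\lambda}\big)(z)\,q_t(h^{-1}k)\,dh,
\]
where $c$ is the explicit positive constant arising from $k_t^{\lambda}=c\,e^{-t\lambda^2}p_t^{\lambda}$. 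This exhibits $(f*\psi_t)^{\lambda}=c\,e^{-t\lambda^2}(T_{\lambda}\otimes S)f^{\lambda}$ as a tensor product, where $T_{\lambda}=e^{-tL_{\lambda}}$ is twisted convolution by $p_t^{\lambda}$ in the $\mathbb{R}^{2n}$ variable and $S$ is convolution by $q_t$ in the $K$ variable. The holomorphic extension is then read off: $T_{\lambda}f^{\lambda}$ is entire in $z\in\mathbb{C}^{2n}$ by Theorem 1.1, and $Sf^{\lambda}$ extends holomorphically to $G$ by Hall's theorem; to extend $f*\psi_t$ itself to $\mathbb{C}^{2n+1}\times G$ I would invert the central Fourier transform, $f*\psi_t(z,\xi,g)=\tfrac{1}{2\pi}\int_{\mathbb{R}}(f*\psi_t)^{\lambda}(z,g)e^{-i\lambda\xi}\,d\lambda$, and use the Gaussian factor $e^{-t\lambda^2}$ together with the Bergman-norm growth bounds to justify convergence and holomorphy for complex $\xi$.

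For part (b) the key observation is that $\mathcal{A}_t^{\lambda}(\mathbb{C}^{2n}\times G)$ is the Hilbert space tensor product of $\mathcal{B}_t^{\lambda}(\mathbb{C}^{2n})$ and $\mathcal{H}L^2(G,\nu)$, i.e.\ the holomorphic functions square-integrable against the product measure $\mathcal{W}_t^{\lambda}\,dz\,dw\otimes d\nu$. Hence $T_{\lambda}\otimes S\colon L^2(\mathbb{R}^{2n}\times K)\to\mathcal{A}_t^{\lambda}(\mathbb{C}^{2n}\times G)$ is a tensor product of two unitaries, so it is unitary, and therefore $\|(f*\psi_t)^{\lambda}\|_{\mathcal{A}_t^{\lambda}}^2=|c|^2e^{-2t\lambda^2}\|f^{\lambda}\|_{L^2(\mathbb{R}^{2n}\times K)}^2$. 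Integrating against $e^{2t\lambda^2}\,d\lambda$ cancels the exponentials, and Plancherel in the central variable gives $\|f*\psi_t\|_{\oplus}^2=\|f\|_{L^2(\mathbb{HM})}^2$ up to the fixed normalizing constant; this is precisely what dictates the weight $e^{2t\lambda^2}$ in the direct integral. Surjectivity follows fibrewise: given a section $s$, put $f^{\lambda}=(T_{\lambda}\otimes S)^{-1}(c^{-1}e^{t\lambda^2}s_{\lambda})$, verify $\int_{\mathbb{R}^*}\|f^{\lambda}\|^2\,d\lambda=|c|^{-2}\|s\|_{\oplus}^2<\infty$, and recover $f\in L^2(\mathbb{HM})$ by the inverse central Fourier transform, the measurability of the section being routine.

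For part (a) I would argue by reduction to the Heisenberg case \cite{KTX}. Restricting the transform to right $K$-invariant $f$, i.e.\ $f(z,\xi,k)=F(z,\xi)$, the $h$-integral collapses since $\int_K q_t(h^{-1}k)\,dh=1$, so $f*\psi_t=F*k_t$ is independent of $g\in G$ and coincides with the Heisenberg heat kernel transform of $F\in L^2(\mathbb{H}^n)$. If the image of $L^2(\mathbb{HM})$ were a weighted Bergman space on $\mathbb{C}^{2n+1}\times G$ with nonnegative weight $W$, then integrating $W$ over $G$ would exhibit the image of $L^2(\mathbb{H}^n)$ as a weighted Bergman space on $\mathbb{C}^{2n+1}$ with nonnegative weight, contradicting \cite{KTX}; equivalently, one computes the candidate weight directly as an inverse Fourier transform in $\lambda$ and checks, exactly as in \cite{KTX}, that it fails to be pointwise nonnegative.

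The genuine mathematical content is packaged in Theorem 1.1 and Hall's theorem, so the real work is twofold. The first task is to carry out the convolution computation on the semidirect product carefully enough to see the clean tensor factorization; this is where the $K$-invariance of $k_t$ and the unitarity of $K$ are essential, and where the phase and sign bookkeeping in the twisted convolution can easily go wrong. The second, and I expect the most delicate, task is the analytic justification of the joint holomorphic extension into the central variable $\xi\in\mathbb{C}$: interchanging the $\lambda$-integral with $\bar\partial$ and dominating the integrand uniformly on compact sets, which is exactly where the decay $e^{-t\lambda^2}$ must be used against the growth of the fibre Bergman norms.
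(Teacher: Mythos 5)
Your proposal is correct and rests on the same two pillars as the paper's proof --- Theorem 1.1 for the $\lambda$-twisted heat kernel transform and Hall's $K$-averaged theorem --- but it organizes them differently. For the isometry and surjectivity in (b), your explicit factorization $(f*\psi_t)^{\lambda}=c\,e^{-t\lambda^2}(e^{-tL_{\lambda}}\otimes e^{-t\Delta_K})f^{\lambda}$ is exactly the structure the paper exploits, only the paper reaches it by two successive applications of Fubini (first Hall's Theorem 2 in the $K$-fibre, then Theorem 5.1 of \cite{KTX} in the Heisenberg fibre) rather than by naming the tensor product; the surjectivity arguments are identical, and both implicitly use the fact that $\mathcal{A}_t^{\lambda}(\mathbb{C}^{2n}\times G)$ is the holomorphic $L^2$-space of the product measure, i.e.\ the Hilbert tensor product of the two fibre Bergman spaces. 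Part (a) is the same reduction to the Heisenberg case. The genuine divergence is in the holomorphic extension to $\mathbb{C}^{2n+1}\times G$: the paper expands $f$ by Peter--Weyl in the $K$-variable, reduces each term to a Heisenberg heat kernel transform (whose entire extension and the continuity of $e^{-t\Delta_{\mathbb{H}^n}}L^2(\mathbb{H}^n)\hookrightarrow\mathcal{O}(\mathbb{C}^{2n+1})$ are quoted from \cite{KTX}), and controls the sum over $\pi\in\widehat{K}$ by Cauchy--Schwarz against the smoothness of the analytically continued heat kernel $q_t$ on $G$. You instead propose to work fibrewise in $\lambda$ and recover holomorphy in the complex central variable by inverting the central Fourier transform, dominating the integrand by the Gaussian $e^{-t\lambda^2}$ against ``Bergman-norm growth bounds.'' That route is viable --- it is essentially how \cite{KTX} themselves proceed on $\mathbb{H}^n$ --- but the step you flag as delicate is real and is left unexecuted: you need a pointwise bound $|F(z,w)|\le C_{L,\lambda}\|F\|_{\mathcal{B}_t^{\lambda}}$ on compacta with explicit, at most sub-Gaussian, growth of $C_{L,\lambda}$ in $\lambda$ (it grows like $e^{c|\lambda|}$ on compact sets with nonzero imaginary part), and a corresponding statement for the $G$-factor, before the $\lambda$-integral can be differentiated under the integral sign. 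The paper's Peter--Weyl route buys a shorter argument by outsourcing all $\lambda$-uniformity to the already-proved continuity statement of \cite{KTX}; yours buys a cleaner conceptual picture (one unitary per fibre, one weight $e^{2t\lambda^2}$ forced by the normalization) at the cost of redoing those reproducing-kernel estimates.
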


\begin{proof}

Let $f \in L^2(\mathbb{HM}).$ Expanding $f$ in the $K$-variable
using the Peter-Weyl theorem we obtain $$ f(x,u,t,k) = \sum_{\pi
\in \widehat{K}} d_{\pi} \sum_{i,j = 1}^{d_\pi}
f_{ij}^{\pi}(x,u,t) \phi_{ij}^{\pi}(k)$$ where for each $\pi \in
\widehat{K},$ $d_\pi$ is the degree of $\pi,$ $\phi_{ij}^{\pi}$'s
are the matrix coefficients of $\pi$ and $\ds{f_{ij}^{\pi}(x,u,t)
= \int_K f(x,u,t,k) \overline{\phi_{ij}^{\pi}(k)} dk}.$ Here, the
convergence is understood in the $L^2$-sense. Moreover, by the
universal property of the complexification of a compact Lie group
(see Section 3 of \cite{H}), all the representations of $K,$ and
hence all the matrix entries, extend holomorphically to $G.$

Since $\psi_t$ is $K$-invariant (as a function on $\mathbb{H}^n$)
a simple computation shows that $$ f * \psi_t (x,u,t,k) =
\sum_{\pi \in \widehat{K}} d_{\pi} e^{-\frac{\lambda_{\pi}t}{2}}
\sum_{i,j = 1}^{d_\pi} f_{ij}^{\pi} * k_t(x,u,t)
\phi_{ij}^{\pi}(k),$$ where the convolution on the right is the
one on $\mathbb{H}^n.$ It is easily seen that $f_{ij}^{\pi} \in
L^2(\mathbb{H}^n)$ for every $\pi \in \widehat{K}$ and $1 \leq i,j
\leq d_{\pi}.$ Hence $f_{ij}^{\pi} * k_t$ extends to a holomorphic
function on $\mathbb{C}^{2n+1}.$ We formally define the analytic
continuation of $f * \psi_t$ to $\mathbb{C}^{2n+1} \times G$ by $$
f * \psi_t (z,w,\zeta,g) = \sum_{\pi \in \widehat{K}} d_{\pi}
e^{-\frac{\lambda_{\pi}t}{2}} \sum_{i,j = 1}^{d_\pi} f_{ij}^{\pi}
* p_t(z,w,\zeta) \phi_{ij}^{\pi}(g)$$ where $(z,w,\zeta) \in
\mathbb{C}^{2n+1}$ and $g \in G.$

We claim that the above series converges uniformly on compact
subsets of $\mathbb{C}^{2n+1} \times G$ so that $f * \psi_t$
extends to an entire function on $\mathbb{C}^{2n+1} \times G.$ We
know from Section 4, Proposition 1 of \cite{H} that the
holomorphic extension of the heat kernel $q_t$ on $K$ is given by
$$q_t(g) = \sum_{\pi \in \widehat{K}} d_\pi e^{-\frac{\lambda_{\pi}t}{2}}
\chi_{\pi}(g).$$ For each $g \in G,$ define the function $q_t^g(k)
= q_t(gk).$ Then $q_t^g$ is a smooth function on $K$ and is given
by \beas q_t^g(k) &=& \sum_{\pi \in \widehat{K}} d_\pi
e^{-\frac{\lambda_{\pi}t}{2}} \chi_{\pi}(gk)\\
&=& \sum_{\pi \in \widehat{K}} d_\pi e^{-\frac{\lambda_{\pi}t}{2}}
\sum_{i,j=1}^{d_{\pi}} \phi_{ij}^{\pi}(g) \phi_{ji}^{\pi}(k).
\eeas Since $q_t^g$ is a smooth function on $K,$ we have for each
$g \in G,$ \bea \label{e1} \int_K |q_t^g(k)|^2 dk = \sum_{\pi \in
\widehat{K}} d_\pi e^{-\lambda_{\pi}t} \sum_{i,j=1}^{d_{\pi}}
|\phi_{ij}^{\pi}(g)|^2 < \infty .\eea Let $L$ be a compact set in
$\mathbb{C}^{2n+1} \times G.$ For $(z,w,\zeta,g) \in L$ we have,
\bea \label{e2} |f * \psi_t (z,w,\zeta, g)| \leq \sum_{\pi \in
\widehat{K}} d_{\pi} e^{-\frac{\lambda_{\pi}t}{2}} \sum_{i,j =
1}^{d_\pi} |f_{ij}^{\pi} * p_t(z,w,\zeta)| |\phi_{ij}^{\pi}(g)|.
\eea It is known that the inclusion
$\ds{e^{-t\Delta_{\mathbb{H}^n}} L^2(\mathbb{H}^n) \hookrightarrow
\mathcal{O}(\mathbb{C}^{2n+1})}$ is continuous (see Section 3 of
\cite{KTX}) i.e. there exists a constant $C_L$ depending on $L$
such that for any $h \in L^2(\mathbb{H}^n),$ $$ \sup_{(z,w,\zeta)
\in L'} |h*k_t(z,w,\zeta)| \leq C_L \|h\|_{L^2(\mathbb{H}^n)}$$
where $L'$ is the projection of $L$ to $\mathbb{C}^{2n+1}.$ Using
the above in (\ref{e2}) and applying Cauchy-Schwarz inequality we
get \beas |f
* \psi_t (z,w,\zeta,g)| &\leq& C_L \sum_{\pi \in \widehat{K}}
d_{\pi} \sum_{i,j = 1}^{d_\pi}
\|f_{ij}^{\pi}\|_{L^2(\mathbb{H}^n)}
e^{-\frac{\lambda_{\pi}t}{2}} |\phi_{ij}^{\pi}(g)| \\
&\leq& C_L \left( \sum_{\pi \in \widehat{K}} d_{\pi} \sum_{i,j =
1}^{d_\pi} \int_{\mathbb{H}^n} |f_{ij}^{\pi}(X)|^2 dX \right)
^{\frac{1}{2}} \left( \sum_{\pi \in \widehat{K}} d_{\pi} \sum_{i,j
= 1}^{d_\pi} e^{-\lambda_{\pi}t} |\phi_{ij}^{\pi}(g)|^2
\right)^{\frac{1}{2}} .\eeas Noting that $\ds{ \|f\|_2^2 =
\sum_{\pi \in \widehat{K}} d_{\pi} \sum_{i,j = 1}^{d_\pi}
\int_{\mathbb{H}^n} |f_{ij}^{\pi}(X)|^2 dX }$ and $q_t$ is a
smooth function on $G$ we prove the claim using (\ref{e1}). Hence
$f*\psi_t$ extends holomorphically to $\mathbb{C}^{2n+1} \times
G.$

Next, we want to prove the non-existence of a non-negative weight
function $W_t$ on $\mathbb{C}^{2n+1} \times G$ such that for every
$f \in L^2(\mathbb{HM}),$ we have
$$\int_K \int_{\mathbb{H}^n} |f(x,u,t,k)|^2 dx du dt dk = \int_{\mathbb{C}^{2n+1}
\times G} |f*\psi_t(z,w,\zeta,g)|^2 W_t(z,w,\zeta,g) dz dw d\zeta
dg.$$ If we take $f$ such that $f(x,u,t,k)=h(x,u,t)$ for $h \in
L^2(\mathbb{H}^n),$ then we get from the above relation that
$$ \int_{\mathbb{H}^n} |h(x,u,t)|^2 dx du dt =
\int_{\mathbb{C}^{2n+1}} |h*k_t(z,w,\zeta)|^2 \left( \int_G
W_t(z,w,\zeta,g) dg \right)dz dw d\zeta.$$ Since $W_t$ is assumed
to be non-negative, this clearly gives a contradiction to the fact
that the image of $L^2(\mathbb{H}^n)$ under the Segal-Bargmann
transform $h \ra h * k_t$ cannot be characterized as a weighted
Bergman space with a non-negative weight (see \cite{KTX} for
details).

For any $f \in L^2(\mathbb{HM}),$ define $f_X(k)= f(X,k)$ for $X
\in \mathbb{H}^n$ and $k \in K.$ Then using Theorem 2 in \cite{H}
we have \beas \|f\|_2^2 &=&
\int_{\mathbb{H}^n} \int_K |f(X,k)|^2 dk dX \\
&=& \int_{\mathbb{H}^n} \int_G |f_X * q_t(g)|^2 d\nu(g) dX.\eeas
Define $F_g(X) = f_X * q_t(g)$ for $g \in G.$ Then, using Theorem
5.1 of \cite{KTX} we get that \beas \|f\|_2^2 &=& \int_G
\int_{\mathbb{H}^n} |F_g(X)|^2 dX d\nu(g) \\
&=& \int_G \int_{\mathbb{R}^*} e^{2t\la^2} \|(F_g
*k_t)^{\la}\|^2_{\mathcal{B}_t^{\la}(\mathbb{C}^{2n})} d\la d\nu(g)\\
&=& \int_G \int_{\mathbb{R}^*} e^{2t\la^2}
\int_{\mathbb{C}^{2n}}|(F_g *k_t)^{\la}(z,w)|^2
\mathcal{W}_t^{\lambda}(z,w) dz dw d\la d\nu(g). \eeas It is easy
to see that $F_g *k_t(x,u)=f*\psi_t(x,u,g).$ Since the functions
on both the sides extend holomorphically to $\mathbb{C}^{2n},$ we
have $F_g *k_t(z,w)=f*\psi_t(z,w,g)$ for every $z,w \in
\mathbb{C}^n$ and $g \in G.$ Hence it follows that \beas \|f\|_2^2
&=& \int_{\mathbb{R}^*} \int_G e^{2t\la^2}
\int_{\mathbb{C}^{2n}}|(f*\psi_t)^{\la}(z,w,g)|^2
\mathcal{W}_t^{\lambda}(z,w) dz dw d\nu(g) d\la. \eeas

It remains to prove the surjectivity part. Let $\ds{s \in
\int_{\mathbb{R}^*}^{\oplus} \mathcal{A}_t^{\la}(\mathbb{C}^{2n}
\times G) e^{2t\la^2} d\la}.$ Then $s_{\la} \in
\mathcal{A}_t^{\la}(\mathbb{C}^{2n} \times G)$ and
$\ds{\int_{\mathbb{R}^*} \|s_{\la}\|_{\la}^2 e^{2t\la^2} d\la <
\infty.}$ Now $e^{-t\Delta_K} e^{-tL_{\la}}$ is a unitary map from
$L^2(\mathbb{R}^{2n} \times K)$ onto
$\mathcal{A}_t^{\la}(\mathbb{C}^{2n} \times G).$ So there exists
$g_{\la} \in L^2(\mathbb{R}^{2n} \times K)$ for each $\la \neq 0$
such that $e^{-t\Delta_K} e^{-tL_{\la}}g_{\la} = e^{t\la^2}
s_{\la}$ and $\ds{\int_{\mathbb{R}^*}
\|g_{\la}\|_{L^2(\mathbb{R}^{2n} \times K)}^2 d\la < \infty.}$
This implies that there exists a unique $f \in L^2(\mathbb{HM})$
such that $f^{\la}(x,u) = g_{\la}(x,u)$ almost everywhere. Finally
we have $(f*\psi_t)^{\la} = e^{-t\la^2} e^{-t\Delta_K}
e^{-tL_{\la}}g_{\la}$ in $L^2.$ Using the above equalities we have
$(f*\psi_t)^{\la} = s_{\la}.$ This proves the surjectivity and
hence the theorem.

\end{proof}

\vspace{0.8 in}

\section{Gutzmer's formula and Poisson Integrals}

In this section, we briefly recall Gutzmer's formula on compact,
connected Lie groups given by Lassalle in \cite{L}. Then we prove
a Gutzmer type formula for functions on $\mathbb{C}^{2n}$ with
respect to the $K$-action. With the help of the above Gutzmer's
formulae, we characterize Poisson integrals on the Heisenberg
motion groups. We also give a generalization of the Segal-Bargmann
transform on Heisenberg motion groups.

\subsection{Gutzmer's formula on compact, connected Lie
groups}\hspace{0.2in}

Let $\underline{k}$ and $\underline{g}$ be the Lie algebras of a
compact, connected Lie group $K$ and its complexification $G.$
Then we can write $\underline{g} = \underline{k} + \underline{p}$
where $\underline{p}=i\underline{k}$ and any element $g \in G$ can
be written in the form $g=k\exp{iH}$ for some $k \in K, ~ H \in
\underline{k}.$ If $\underline{h}$ is a maximal, abelian
subalgebra of $\underline{k}$ and $\underline{a}=i\underline{h}$
then every element of $\underline{p}$ is conjugate under $K$ to an
element of $\underline{a}.$ Thus each $g \in G$ can be written
(non-uniquely) in the form $g = k_1 \exp {(iH)} k_2 $ for $k_1,
k_2 \in K$ and $H \in \underline{h}.$ If $ k_1 \exp {(iH_1)} k_1'
= k_2 \exp {(iH_2)} k_2', $ then there exists $w \in W,$ the Weyl
group with respect to $\underline{h},$ such that $H_1 = w \cdot
H_2$ where $\cdot$ denotes the action of the Weyl group on
$\underline{h}.$ Since $K$ is compact, there exists an Ad$-
K$-invariant inner product on $\underline{k},$ and hence on
$\underline{g}.$ Let $|\cdot|$ denote the norm with respect to the
said inner product. Then we have the following Gutzmer's formula
by Lassalle (see \cite{L}).

\begin{thm}\label{lassalle}
Let $f$ be a holomorphic function in $K\exp(i\Omega_r)K \subseteq
G$ where $\Omega_r = \{ H \in \underline{k} : |H| <r \}.$ Then we
have $$ \int_K \int_K |f(k_1 \exp{iH} k_2)|^2 dk_1 dk_2 =
\sum_{\pi \in \widehat{K}} \|\widehat{f}(\pi)\|_{HS}^2
\chi_{\pi}(\exp{2iH})$$ where $H \in \Omega_r$ and
$\widehat{f}(\pi)$ is the operator-valued Fourier transform of $f$
at $\pi$ defined by $\ds{\widehat{f}(\pi) = \int_K f(k)
\pi(k^{-1}) dk.}$
\end{thm}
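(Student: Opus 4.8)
The plan is to combine the Peter--Weyl expansion of the restriction $f|_K$, analytic continuation to $G$, the Schur orthogonality relations, and one observation about the self-adjointness of $\pi(\exp iH)$. First I would restrict $f$ to the totally real submanifold $K \subset G$ and expand it by the Peter--Weyl theorem in terms of its operator-valued Fourier transform, namely $f(k) = \sum_{\pi \in \widehat K} d_\pi \, \mathrm{tr}(\widehat f(\pi)\, \pi(k))$ for $k \in K$, which is the inversion formula attached to $\widehat f(\pi) = \int_K f(k)\pi(k^{-1})\,dk$. Since every matrix coefficient $\phi_{ij}^\pi$ extends holomorphically to $G$ (by the universal property of the complexification already used in the proof of Theorem~\ref{sbt}), the right-hand side, continued term by term, defines a holomorphic function on $K\exp(i\Omega_r)K$; as it agrees with the holomorphic function $f$ on $K$, the identity theorem gives
$$ f(k_1 \exp(iH) k_2) = \sum_{\pi \in \widehat K} d_\pi \, \mathrm{tr}\bigl( \widehat f(\pi)\, \pi(k_1)\,\pi(\exp iH)\,\pi(k_2) \bigr) $$
for all $k_1, k_2 \in K$ and $H \in \Omega_r$, where $\pi(\exp iH) = \exp(i\,d\pi(H))$ is the holomorphic continuation of $\pi$.

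Next I would compute $\int_K\int_K |f(k_1\exp(iH)k_2)|^2\,dk_1\,dk_2$ by inserting this expansion together with its conjugate, writing the traces out in the matrix coefficients $\phi_{ij}^\pi$, and integrating in $k_1$ and $k_2$ separately. The Schur orthogonality relations $\int_K \phi_{ij}^\pi \,\overline{\phi_{kl}^{\pi'}}\,dk = d_\pi^{-1}\delta_{\pi\pi'}\delta_{ik}\delta_{jl}$ force the two representations to coincide and annihilate all cross terms, while the factor $d_\pi^2$ coming from the two expansions cancels against the two factors $d_\pi^{-1}$ produced by orthogonality. After collecting the surviving indices the double integral collapses to
$$ \int_K\int_K |f(k_1\exp(iH)k_2)|^2\,dk_1\,dk_2 = \sum_{\pi \in \widehat K} \|\widehat f(\pi)\|_{HS}^2 \; \|\pi(\exp iH)\|_{HS}^2. $$

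The final step converts $\|\pi(\exp iH)\|_{HS}^2$ into a character value. Because $\pi$ is unitary on $K$, the operator $d\pi(H)$ is skew-Hermitian for $H \in \underline k$, so $i\,d\pi(H)$ is Hermitian and hence $\pi(\exp iH) = \exp(i\,d\pi(H))$ is a positive self-adjoint operator. Therefore $\pi(\exp iH)^* \pi(\exp iH) = \pi(\exp iH)^2 = \exp(2i\,d\pi(H)) = \pi(\exp 2iH)$, which gives $\|\pi(\exp iH)\|_{HS}^2 = \mathrm{tr}\,\pi(\exp 2iH) = \chi_\pi(\exp 2iH)$ and completes the formula.

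I expect the main obstacle to be the analytic step rather than the algebra: justifying that the term-by-term continuation of the Peter--Weyl series converges on $K\exp(i\Omega_r)K$ and that the interchange of summation with the double integral is legitimate. This is exactly where the radius $r$ enters, since the Fourier coefficients $\widehat f(\pi)$ decay according to the holomorphy of $f$ on the tube while $\|\pi(\exp iH)\|$ grows at a rate governed by $|H|$ and the highest weight of $\pi$ (controlled by $\lambda_\pi$); convergence holds precisely when $|H| < r$, so that the decay dominates the growth. I would make this rigorous by Cauchy estimates on a slightly smaller tube $K\exp(i\Omega_{r'})K$ with $|H| < r' < r$, obtaining uniform bounds that permit dominated convergence, after which the orthogonality computation and the self-adjointness observation yield the stated identity.
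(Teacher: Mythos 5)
The paper does not actually prove this theorem: it is quoted from Lassalle and the reader is simply referred to \cite{L}, so there is no in-paper argument to compare against. Your outline is, in substance, the standard proof of Gutzmer's formula, and its algebraic core is correct: Peter--Weyl inversion $f(k)=\sum_{\pi}d_\pi\,\mathrm{tr}(\widehat f(\pi)\pi(k))$, Schur orthogonality applied separately in $k_1$ and $k_2$ (which kills the cross terms and cancels the factors of $d_\pi$), and the observation that $\pi(\exp iH)$ is positive self-adjoint, so that $\|\pi(\exp iH)\|_{HS}^2=\mathrm{tr}\,\pi(\exp 2iH)=\chi_\pi(\exp 2iH)$ --- the same computation the paper itself carries out later inside the proof of Theorem \ref{poisson}.

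The one soft spot is the analytic step you yourself flag, and your proposed fix is a little too optimistic as stated. A uniform Cauchy-type bound $\|\widehat f(\pi)\|_{HS}\lesssim e^{-r'|\mu|}$ is not what holomorphy on $K\exp(i\Omega_r)K$ gives you: shifting the contour yields $\int_K f(k\exp iH')\pi(k^{-1})\,dk=\pi(\exp iH')\widehat f(\pi)$, and since $\pi(\exp iH')$ acts on the $\nu$-weight space by $e^{-\langle\nu,H'\rangle}$, the decay of $\widehat f(\pi)$ you can extract is anisotropic --- roughly $e^{-r'|\nu|}$ on the $\nu$-weight space after optimizing over the direction of $H'$, not $e^{-r'|\mu|}$ across the whole matrix. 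This is exactly matched, weight by weight, by the growth $\chi_\pi(\exp 2iH)=\sum_\nu e^{-2\langle\nu,H\rangle}$, so the bookkeeping does close for $|H|<r$, but ``decay dominates growth'' has to be run per weight space, not per representation. Alternatively you can sidestep pointwise convergence entirely: for fixed $H$ the function $(k_1,k_2)\mapsto f(k_1\exp(iH)k_2)$ is continuous, hence lies in $L^2(K\times K)$; apply Parseval on $K\times K$ and identify its Fourier coefficient at $\pi\boxtimes\pi'$ using the analytically continued translation identity above (it vanishes unless $\pi'$ is dual to $\pi$ and otherwise reduces to $\widehat f(\pi)$ and $\pi(\exp iH)$). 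The formula then becomes an instance of Plancherel on $K\times K$ and no interchange of summation and integration is ever required; this is essentially Lassalle's route.
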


For the proof of above, see \cite{L}.

\subsection{The Hermite and special Hermite
functions}\hspace{0.1in}\label{hermite}

Here we collect relevant information about Hermite and special
Hermite functions. We closely follow the notation used in \cite{T}
and we refer the reader to the same for more details.

For every $\la \neq 0,$ the Schr$\ddot{\txt{o}}$dinger
representation $\pi_{\la}$ of the Heisenberg group $\mathbb{H}^n$
on $L^2(\mathbb{R}^n)$ is defined by
$$\pi_{\la}(x,u,t)f(\xi) = e^{i\la t} e^{i\la (x \cdot \xi +
\frac{1}{2} x \cdot u)} f(\xi + u)$$ where $f \in
L^2(\mathbb{R}^n).$ A celebrated theorem of Stone-von Neumann says
that up to unitary equivalence these are all the irreducible
unitary representations of $\mathbb{H}^n$ that are nontrivial at
the center.

\begin{thm}
The representations $\pi_{\la},$ $\la \neq 0$ are unitary and
irreducible. If $\rho$ is an irreducible unitary representation of
$\mathbb{H}^n$ on a Hilbert space $\mathcal{H}$ such that
$\rho(0,0,t)= e^{i\la t}I$ for some $\la \neq 0,$ then $\rho$ is
unitarily equivalent to $\pi_{\la}.$
\end{thm}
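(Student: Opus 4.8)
The plan is to prove the two assertions separately: first that each $\pi_\la$ is a well-defined irreducible unitary representation, and then the uniqueness statement, which is the substantive part.

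For the first assertion, I would begin by verifying directly from the defining formula $\pi_\la(x,u,t)f(\xi) = e^{i\la t}e^{i\la(x\cdot\xi + \frac{1}{2}x\cdot u)}f(\xi+u)$ that $\pi_\la$ is a homomorphism into the unitary group of $L^2(\mathbb{R}^n)$. Unitarity is immediate since the factor multiplying $f(\xi+u)$ is a unimodular scalar and translation by $u$ is isometric. Checking the homomorphism property is a routine computation: one multiplies $\pi_\la(x,u,t)\pi_\la(x',u',s)$ and verifies the phases combine to reproduce $\pi_\la$ applied to the product $(x,u,t)\cdot(x',u',s)$ with the Heisenberg group law; the term $\tfrac{1}{2}(u\cdot x' - x\cdot u')$ in the central coordinate is exactly what is needed to match. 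For irreducibility, the standard argument is to suppose $A$ is a bounded operator commuting with all $\pi_\la(x,u,t)$ and show $A$ is scalar. Commuting with the modulation operators $\pi_\la(x,0,0)$ forces $A$ to be a multiplication operator (via the structure of operators commuting with all multiplications by $e^{i\la x\cdot\xi}$), while commuting with the translation operators $\pi_\la(0,u,0)$ forces the multiplier to be translation-invariant, hence constant; by Schur's lemma $\pi_\la$ is irreducible.

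For the uniqueness assertion, suppose $\rho$ is an irreducible unitary representation on $\mathcal{H}$ with $\rho(0,0,t) = e^{i\la t}I$. The cleanest approach is the integrated-representation method. I would integrate $\rho$ against Schwartz functions supported appropriately in the central variable, reducing the problem to the twisted convolution algebra on $\mathbb{R}^{2n}$: because $\rho$ acts by the scalar $e^{i\la t}$ on the center, the relevant operators $\rho(\phi) = \int \phi(x,u,t)\,\rho(x,u,t)\,dx\,du\,dt$ factor through the $\la$-twisted convolution structure introduced earlier in the excerpt. One then shows that the Weyl-type operators built from the special Hermite functions give a resolution of the identity: the special Hermite functions $\Phi_{\alpha\beta}^\la$ yield, under $\rho$, a family of rank-one-type projections (the matrix coefficients reproduce themselves under $\la$-twisted convolution), and these exhaust $\mathcal{H}$ by irreducibility. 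Picking out a cyclic vector associated to the ground-state Hermite function produces an explicit intertwining isometry from $L^2(\mathbb{R}^n)$ onto $\mathcal{H}$ conjugating $\pi_\la$ to $\rho$.

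The main obstacle is the uniqueness half, and specifically setting up the integrated representation so that the twisted-convolution algebra acts irreducibly and the special Hermite functions furnish the required system of matrix units. The delicate point is verifying that the projection built from the ground state is nonzero and that the resulting intertwiner is onto — the latter uses irreducibility of $\rho$ crucially. Since this is a classical result, I expect that in the actual proof the author simply cites the standard references (as flagged by the phrase "A celebrated theorem of Stone–von Neumann"), so rather than reconstructing the full twisted-convolution machinery I would present the structure above and defer the technical resolution-of-identity computation to the literature on the Heisenberg group, e.g. the reference \cite{T} already in use for the special Hermite function formalism.
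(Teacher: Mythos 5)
The paper gives no proof of this statement: it is quoted as the classical Stone--von Neumann theorem, with the reader referred to \cite{T} for details. Your sketch is a correct outline of the standard argument found in that reference --- unitarity and the homomorphism property by direct computation (the cross term $\tfrac{1}{2}(u\cdot x'-x\cdot u')$ in the central coordinate does match the phases), irreducibility via the commutant of the modulation and translation operators, and uniqueness via the integrated representation acting through the $\lambda$-twisted convolution algebra with the special Hermite functions as matrix units --- so it is consistent with what the paper relies on, and the delicate points you flag (nonvanishing of the ground-state projection and surjectivity of the intertwiner) are exactly where irreducibility of $\rho$ enters in the standard treatment.
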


Note that $\ds{\pi_{\la}(x,u,t) = e^{i \la t} \pi_{\la}(x,u,0)}.$
We shall write $\pi_{\la}(x,u,0)$ as $\pi_{\la}(x,u).$ Let
$\phi_{\alpha},$ $\alpha \in \mathbb{N}^n$ be the Hermite
functions on $\mathbb{R}^n$ normalized so that their $L^2$ norms
are one. The family $\{\phi_{\alpha},$ $\alpha \in \mathbb{N}^n\}$
is an orthonormal basis of $L^2(\mathbb{R}^n).$ For $\la \neq 0,$
we define the scaled Hermite functions $$ \phi_{\alpha}^{\la}(x) =
|\la|^{\frac{n}{4}} \phi_{\alpha}(|\la|^{\frac{1}{2}} x).$$ We
also consider $$ \phi_{\alpha \beta}^{\la}(x,u) =
(2\pi)^{-\frac{n}{2}} |\la|^{\frac{n}{2}} \langle \pi_{\la}(x,u,0)
\phi_{\alpha}^{\la}, \phi_{\beta}^{\la}\rangle, ~~ \txt{ for }
\alpha, \beta \in \mathbb{N}^n$$ which are essentially the matrix
coefficients of $\pi_{\la}$ at $(x,u,0) \in \mathbb{H}^n.$ They
are the so called special Hermite functions and  $\{\phi_{\alpha
\beta}^{\la} : \alpha, \beta \in \mathbb{N}^n\}$ is a complete
orthonormal system in $L^2(\mathbb{R}^{2n}).$

Note that $\phi_{\alpha}^{\la}(\xi) = H_{\alpha}^{\la}(\xi)
e^{-\frac{\la}{2} |\xi|^2}$ where $H_{\alpha}^{\la}$ is a
polynomial on $\mathbb{R}^n.$ For $z \in \mathbb{C}^n,$ we define
$\phi_{\alpha}^{\la}(z)$ to be $H_{\alpha}^{\la}(z)
e^{-\frac{\la}{2} z^2}$ where $z^2 = z \cdot z.$ Then for $z,w \in
\mathbb{C}^n$ we can define
$$\pi_{\la}(z,w,t)\phi_{\alpha}^{\la}(\xi) = e^{i\la t} e^{i\la (z \cdot
\xi + \frac{1}{2} z \cdot w)} \phi_{\alpha}^{\la}(\xi + w).$$
Hence we have $$ \phi_{\alpha \beta}^{\la}(z,w) =
(2\pi)^{-\frac{n}{2}} |\la|^{\frac{n}{2}} \langle \pi_{\la}(z,w)
\phi_{\alpha}^{\la}, \phi_{\beta}^{\la}\rangle$$ for $z,w \in
\mathbb{C}^n.$ An easy calculation shows that \bea\label{schr}
\langle \pi_{\la}(z,w) \phi_{\alpha}^{\la},
\phi_{\beta}^{\la}\rangle = \langle \phi_{\alpha}^{\la},
\pi_{\la}(-\bar{z},-\bar{w}) \phi_{\beta}^{\la}\rangle.\eea Notice
that both $\phi_{\alpha}^{\la}(z),$ $\phi_{\alpha
\beta}^{\la}(z,w)$ are holomorphic on $\mathbb{C}^n$ and
$\mathbb{C}^{2n}$ respectively.

\subsection{Gelfand pairs and the Heisenberg group}\hspace{0.1in}
\label{s1}

For each $k \in K \subseteq U(n),$ $(x,u,t) \ra (k \cdot (x,u),t)$
is an automorphism of $\mathbb{H}^n,$ because $U(n)$ preserves the
symplectic form $x \cdot u - y \cdot v.$ If $\rho$ is a
representation of $\mathbb{H}^n,$ then using this automorphism we
can define another representation $\rho_k$ by $\rho_k(x,u,t) =
\rho(k \cdot (x,u),t)$ which coincides with $\rho$ at the center.
If we take $\rho$ to be the Schr$\ddot{\txt{o}}$dinger
representation $\pi_{\la},$ then by Stone-von Neumann theorem
$(\pi_{\la})_k$ is unitarily equivalent to $\pi_{\la}$ and we have
the unitary intertwining operator $\mu_{\la}$ such that \bea
\label{meta} \pi_{\la}(k \cdot (x,u),t) = \mu_{\la}(k)
\pi_{\la}(x,u,t) \mu_{\la}(k)^*.\eea

The operator valued function $\mu_{\la}$ can be chosen so that it
becomes a unitary representation of $K$ on $L^2(\mathbb{R}^n)$ and
is called the metaplectic representation. For each $m>0,$ let
$\mathcal{P}_m$ be the linear span of $\{ \phi_{\alpha} : |\alpha|
= m \}.$ Then each such $\mathcal{P}_m$ is invariant under the
action of $\mu_{\la}(k)$ for every $k \in K \subseteq U(n).$ When
$K=U(n),$ $\mu_{\la}|_{\mathcal{P}_m}$ is irreducible. When $K$ is
a proper compact subgroup of $U(n),$ $\mathcal{P}_m$ need not be
irreducible under the action of $\mu_{\la}.$ So it further
decomposes into irreducible subspaces. It is known that
$(K,\mathbb{H}^n)$ is a Gelfand pair if and only if this action of
$K$ on $L^2(\mathbb{R}^n)$ decomposes into irreducible components
of multiplicity one (see \cite{BJR}).

Let $L_m^{n-1}$ be the Laguerre polynomials of type $(n-1)$ and
define Laguerre functions by $$\varphi_m^{\la}(x,u) =
L_m^{n-1}\left(\frac{|\la|}{2}(|x|^2 + |u|^2)\right)
e^{-\frac{|\la|}{4}(|x|^2+|u|^2)}.$$ Then it is known that
$$\varphi_m^{\la}(x,u)=\sum_{|\alpha|=m} \phi_{\alpha
\alpha}^{\la}(x,u)$$ and $\ds{e_m^{\la}(x,u,t) = }\frac{1}{\dim
\mathcal{P}_m} e^{i \la t} \varphi_m^{\la}(x,u)$ is a
$U(n)$-spherical function. Let $\ds{\mathcal{P}_m =
\bigoplus_{a=1}^{A_m} \mathcal{P}_{ma}}$ be the decomposition of
$\mathcal{P}_m$ into $K$-irreducible subspaces. Then
$$e_{ma}^{\la}(x,u,t) = \frac{1}{\dim
\mathcal{P}_{ma}} e^{i \la t} \varphi_{ma}^{\la}(x,u) =
\frac{1}{\dim \mathcal{P}_{ma}} \sum_{b=1}^{B_a} \langle
\pi_{\la}(x,u,t) \phi_{ma}^b , \phi_{ma}^b \rangle $$ is a
$K$-spherical function where $\{ \phi_{ma}^b : b = 1 \cdots B_a
\}$ is an orthonormal basis for $\mathcal{P}_{ma}$ such that $\{
\phi_{ma}^b : b = 1 \cdots B_a, a = 1 \cdots A_m \}$ is an
orthonormal basis for $\mathcal{P}_m.$ For more on Gelfand pairs
and spherical functions on $\mathbb{H}^n$ see \cite{BJR}.

\subsection{Gutzmer's formula for $K$-special Hermite
functions}\hspace{0.1in}\label{kfns}

Let us write $\{\phi_{ma}^b : b = 1 \cdots B_a, a = 1 \cdots
A_m\}$ as $\{\psi_{\alpha} : \alpha \in \mathbb{N}^n \}$ such that
for each $m,$ $\{\phi_{ma}^b : b = 1 \cdots B_a, a = 1 \cdots
A_m\}$ are the ones which occur as $\psi_{\alpha}$ for
$|\alpha|=m.$ For $\la \neq 0,$ we define
$$ \psi_{\alpha}^{\la}(x) = |\la|^{\frac{n}{4}}
\psi_{\alpha}(|\la|^{\frac{1}{2}} x).$$ Consider $$ \psi_{\alpha
\beta}^{\la}(x,u) = (2\pi)^{-\frac{n}{2}} |\la|^{\frac{n}{2}}
\langle \pi_{\la}(x,u) \psi_{\alpha}^{\la},
\psi_{\beta}^{\la}\rangle$$ and we call them $K$-special Hermite
functions. It is easy to see that $\{\psi_{\alpha \beta}^{\la} :
\alpha, \beta \in \mathbb{N}^n\}$ is a complete orthonormal system
in $L^2(\mathbb{R}^{2n}).$ Since each $\psi_{\alpha}$ is a finite
linear combination of $\phi_{\alpha}$'s, both
$\psi_{\alpha}^{\la}$ and $\psi_{\alpha \beta}^{\la}$ extend as
holomorphic functions to $\mathbb{C}^n$ and $\mathbb{C}^{2n}$
respectively for each $\alpha, \beta \in \mathbb{N}^n.$ We also
note that the action of $K \subseteq U(n)$ on $\mathbb{R}^{2n}$
naturally extends to an action of $G$ on $\mathbb{C}^{2n}.$

\begin{thm}\label{Gutzmer}
For a function $F \in L^2(\mathbb{R}^{2n})$ having a holomorphic
extension to $\mathbb{C}^{2n},$ we have \beas && \int_K
\int_{\mathbb{R}^{2n}} |F(k \cdot (x+iy,u+iv))|^2 e^{\la(u \cdot y
- v \cdot x)} dx du dk \\ &=& \sum_{m=0}^{\infty} \sum_{a=1}^{A_m}
(\dim \mathcal{P}_{ma})^{-1} \varphi_{ma}^{\la}(2iy,2iv) \| F
*_{\la} \varphi_{ma}^{\la} \|^2, \eeas whenever either of them is
finite.
\end{thm}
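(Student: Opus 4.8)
The plan is to reduce the $K$-Gutzmer formula to a Plancherel-type expansion in the $K$-special Hermite basis $\{\psi_{\alpha\beta}^{\la}\}$, exploiting the fact that the weight $e^{\la(u\cdot y - v\cdot x)}$ together with the holomorphic extension is exactly what converts inner products into twisted convolutions. First I would expand the holomorphic function $F$ in the orthonormal basis $\{\psi_{\alpha\beta}^{\la}\}$ of $L^2(\mathbb{R}^{2n})$, writing $F = \sum_{\alpha,\beta} c_{\alpha\beta}\, \psi_{\alpha\beta}^{\la}$ with $c_{\alpha\beta} = \langle F, \psi_{\alpha\beta}^{\la}\rangle$. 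Because each $\psi_{\alpha\beta}^{\la}$ extends holomorphically to $\mathbb{C}^{2n}$, this expansion propagates to the complexified arguments, and the combination $F(k\cdot(x+iy,u+iv))\,e^{\frac{\la}{2}(u\cdot y - v\cdot x)}$ should be recognizable (after using (\ref{schr}) and the relation $\pi_\la(k\cdot(x,u)) = \mu_\la(k)\pi_\la(x,u)\mu_\la(k)^*$ from (\ref{meta})) as a matrix coefficient of $\pi_\la$ evaluated at a point of $\mathbb{C}^{2n}$, with the $K$-action absorbed into the metaplectic representation.

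The heart of the argument is the integration over $k\in K$ together with $(x,u)\in\mathbb{R}^{2n}$. I would carry out the $\mathbb{R}^{2n}$-integration first, using the orthogonality relations for the special Hermite functions to collapse the double sum: the inner integral $\int_{\mathbb{R}^{2n}}$ of the product of two (complexified) matrix coefficients produces, via the holomorphy and the weight $e^{\la(u\cdot y - v\cdot x)}$, a single matrix coefficient evaluated at the purely imaginary point $(2iy,2iv)$. This is the analogue, at the level of $\mathbb{C}^{2n}$, of the Gutzmer computation in Theorem \ref{lassalle}, where the diagonal shift by $\exp(2iH)$ arises. Then the integration over $k\in K$, combined with the decomposition $\mathcal{P}_m = \bigoplus_a \mathcal{P}_{ma}$ and the multiplicity-one property of the Gelfand pair $(K,\mathbb{H}^n)$, forces the cross terms between distinct irreducible blocks $\mathcal{P}_{ma}$ to vanish by Schur orthogonality, leaving only the diagonal contributions indexed by $(m,a)$. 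Grouping the surviving terms in each block reconstitutes $\|F*_\la \varphi_{ma}^\la\|^2$ as the squared norm of the projection of $F$ onto the $(m,a)$-isotypic component, and the factor $(\dim\mathcal{P}_{ma})^{-1}\varphi_{ma}^\la(2iy,2iv)$ emerges as the trace of the relevant block evaluated at the imaginary argument.

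I expect the main obstacle to be the justification of the term-by-term manipulations and the interchange of summation with the two integrations, since the series converges only in $L^2$ a priori while we are evaluating at complex arguments where no uniform control is automatic. The clean way to handle this is to prove the identity first for $F$ a finite linear combination of the $\psi_{\alpha\beta}^{\la}$ (where everything is a finite sum and all integrals are absolutely convergent, so Fubini and the orthogonality relations apply directly), and then pass to the limit: the hypothesis that \emph{either side is finite} is precisely what licenses a monotone/dominated convergence argument on the right-hand positive series to extend the equality to general $F$ in the stated domain. A secondary technical point is verifying that $\varphi_{ma}^\la(2iy,2iv)\geq 0$ (so the right-hand side is a genuine sum of nonnegative terms), which follows from writing $\varphi_{ma}^\la = \sum_{b} \langle \pi_\la(\cdot)\phi_{ma}^b,\phi_{ma}^b\rangle$ and using (\ref{schr}) to express the value at $(2iy,2iv)$ as a sum of squared holomorphic norms.
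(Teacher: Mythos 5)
Your proposal is correct and follows essentially the same route as the paper: expand $F$ in the $K$-special Hermite basis $\{\psi_{\alpha\beta}^{\la}\}$, show these are orthogonal under the complexified inner product by integrating first over $\mathbb{R}^{2n}$ (using (\ref{schr}) and the special Hermite orthogonality to produce a matrix coefficient at $(2iy,2iv)$) and then over $K$ (using (\ref{meta}) and Schur orthogonality to isolate the blocks $\mathcal{P}_{ma}$ and produce $(\dim\mathcal{P}_{ma})^{-1}\varphi_{ma}^{\la}(2iy,2iv)$), and finally identify $\sum_{\alpha}\sum_{\beta\in\mathcal{P}_{ma}}|\langle F,\psi_{\alpha\beta}^{\la}\rangle|^2$ with $\|F*_{\la}\varphi_{ma}^{\la}\|^2$. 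Your extra care with the finite-linear-combination reduction and the limiting argument is a reasonable supplement to what the paper leaves implicit.
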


\begin{proof}

First we want to prove that $\psi_{\alpha \beta}^{\la}$'s are
orthogonal under the inner product $$ \langle F, G \rangle =
\int_K \int_{\mathbb{R}^{2n}} F(k \cdot (x+iy,u+iv)) ~
\overline{G(k \cdot (x+iy,u+iv))} ~ e^{\la(u \cdot y - v \cdot x)}
~ dx ~ du ~ dk $$ for $F, ~ G \in L^2(\mathbb{R}^n)$ which have a
holomorphic extension to $\mathbb{C}^{2n}.$ So, we consider \beas
&& \int_K \int_{\mathbb{R}^{2n}} \psi_{\alpha \beta}^{\la}(k \cdot
(x+iy,u+iv)) ~ \overline{\psi_{\mu \nu}^{\la}(k \cdot
(x+iy,u+iv))} ~ e^{\la(u \cdot y - v \cdot x)} ~ dx ~ du ~ dk \\
&=& (2\pi)^{-n} |\la|^{n} \int_K \int_{\mathbb{R}^{2n}} \left
\langle \pi_{\la}(k \cdot(x,u)) \psi_{\alpha}^{\la}, \pi_{\la}(k
\cdot(iy,iv)) \psi_{\beta}^{\la} \right \rangle ~ \overline{\left
\langle \pi_{\la}(k \cdot(x,u)) \psi_{\mu}^{\la}, \right.}  \\ &&
\overline{ \left. \pi_{\la}(k \cdot(iy,iv)) \psi_{\nu}^{\la}
\right \rangle} ~ dx ~ du ~ dk \eeas by (\ref{schr}). Expanding
$\pi_{\la}(k \cdot(x,u)) \psi_{\alpha}^{\la}$ in terms of
$\psi_{\rho}^{\la},$ $\pi_{\la}(k \cdot(x,u)) \psi_{\mu}^{\la}$ in
terms of $\psi_{\sigma}^{\la}$ and using the self-adjointness of
$\pi_{\la}(k \cdot(iy,iv))$ the above equals \beas && \sum_{\rho,
\sigma \in \mathbb{N}^n} \int_K \left \langle \pi_{\la}(k
\cdot(iy,iv)) \psi_{\rho}^{\la}, \psi_{\beta}^{\la} \right \rangle
~ \overline{\left \langle \pi_{\la}(k \cdot(iy,iv))
\psi_{\sigma}^{\la}, \psi_{\nu}^{\la} \right \rangle} \left(
\int_{\mathbb{R}^{2n}} \psi_{\alpha \rho}^{\la} (k \cdot (x,u))
\right.\\ && \left. \overline{\psi_{\mu \sigma}^{\la} (k \cdot
(x,u))}~ dx ~ du \right ) ~ dk \\
&=& \delta_{\alpha,\mu} \int_K \left \langle \pi_{\la}(k
\cdot(2iy,2iv)) \psi_{\nu}^{\la}, \psi_{\beta}^{\la} \right
\rangle ~ dk, \eeas $\delta$ being the Kronecker delta. Then using
(\ref{meta}), expanding $\mu_{\la}(k^{-1}) \psi_{\nu}^{\la}$ in
terms of $\psi_{\gamma}^{\la},$ for $\gamma \in \mathcal{P}_{ma},$
$\mu_{\la}(k^{-1}) \psi_{\beta}^{\la}$ in terms of
$\psi_{\delta}^{\la}$ for $\delta \in \mathcal{P}_{lb}$ and using
Schur's orthogonality relations we get that the above equals \beas
&& \delta_{\alpha,\mu} \int_K \left \langle \pi_{\la}(2iy,2iv)
\mu_{\la}(k^{-1}) \psi_{\nu}^{\la},
\mu_{\la}(k^{-1}) \psi_{\beta}^{\la} \right \rangle ~ dk \\
&=& \delta_{\alpha,\mu} \sum_{\gamma \in \mathcal{P}_{ma}}
\sum_{\delta \in \mathcal{P}_{lb}} \left( \int_K \eta_{\gamma
\nu}(k^{-1}) \overline{\eta_{\delta \beta}(k^{-1})} dk \right)
\left \langle \pi_{\la}(2iy,2iv) \psi_{\gamma}^{\la}, \psi_{\delta}^{\la}
\right \rangle \\
&=& \delta_{\alpha,\mu} ~ \delta_{\beta,\nu} ~ \dim
\mathcal{P}_{ma}^{-1} ~ \varphi_{ma}^{\la}(2iy,2iv) \eeas where
$\mathcal{P}_{ma}$ and $\mathcal{P}_{lb}$ are the ones which
contain $\psi_{\nu}^{\la}$ and $\psi_{\beta}^{\la}$ respectively
and $\eta_{\gamma \nu}$'s are the matrix coefficients of
$\mu_{\la}.$

Then for a function $F$ as in the statement of the theorem we have
\beas && \int_K \int_{\mathbb{R}^{2n}} |F(k \cdot (x+iy,u+iv))|^2
e^{\la(u \cdot y - v \cdot x)} dx du dk \\ &=& \sum_{m=0}^{\infty}
\sum_{a=1}^{A_m} (\dim \mathcal{P}_{ma})^{-1}
\varphi_{ma}^{\la}(2iy,2iv) \left( \sum_{\alpha \in \mathbb{N}^n}
\sum_{\beta \in \mathcal{P}_{ma}} |\langle F , \phi_{\alpha
\beta}^{\la} \rangle|^2 \right).\eeas Now, it is easy to see from
standard arguments (see \cite{T} for details) that $\ds{ \| F
*_{\la} \varphi_{ma}^{\la} \|^2}$ $\ds{= \sum_{\alpha \in
\mathbb{N}^n} \sum_{\beta \in \mathcal{P}_{ma}} |\langle F ,
\psi_{\alpha \beta}^{\la} \rangle|^2} $ and hence the theorem
follows.

\end{proof}

\subsection{Poisson integrals for the Laplacian on Heisenberg motion
groups} \hspace{0.1in}

Proposition 4.1 of \cite{SR} gives that for $f \in
L^2(\mathbb{H}^n),$ $$ f(x,u,t) = (2\pi)^{-n} \sum_{m=0}^{\infty}
\sum_{a=1}^{A_m} \int_{\mathbb{R}} f * e_{ma}^{\la}(x,u,t) |\la|^n
d\la. $$ Recall that the Laplacian $\Delta$ on $\mathbb{HM}$ is
given by $\ds{\Delta = -\Delta_{\mathbb{H}^n} - \Delta_K}$ and for
suitable functions $f$ on $\mathbb{HM},$ a function $f^{\lambda}$
is defined on $\mathbb{R}^{2n} \times K$ by
$$ f^{\lambda}(x,u,k) = \int_{\mathbb{R}} f(x,u,t,k) e^{i \lambda
t} dt.$$ For $f \in L^2(\mathbb{HM}),$ we have the expansion
$$ f(x,u,t,k) = \sum_{\pi \in \widehat{K}} d_{\pi} \sum_{i,j =
1}^{d_\pi} f_{ij}^{\pi}(x,u,t) \phi_{ij}^{\pi}(k)$$ where
$\ds{f_{ij}^{\pi}(x,u,t) = \int_K f(x,u,t,k)
\overline{\phi_{ij}^{\pi}(k)} dk}.$ Then it is easy to see that
\beas && e^{-q\Delta^{\frac{1}{2}}} f(x,u,t,k) \\ &=& (2\pi)^{-n}
\sum_{\pi \in \widehat{K}} d_{\pi} \sum_{i,j = 1}^{d_\pi}
\left(\int_{\mathbb{R}} \sum_{m=0}^{\infty} \sum_{a=1}^{A_m}
e^{-q((2m+n)|\la| + \la^2 + \lambda_{\pi})^{\frac{1}{2}}}
(f_{ij}^{\pi})^{\la} *_{\la} \varphi_{ma}^{\la}(x,u) e^{i \la
t}|\la|^n d\la \right) \\ && \phi_{ij}^{\pi}(k).\eeas We have the
following (almost) characterization of the Poisson integrals. Let
$\ds{\Omega_{p,p'}}$ be the domain in $\ds{\mathbb{C}^{2n+1}
\times G}$ defined by $\ds{\{ (z,w,\tau,g) \in \mathbb{C}^{n}
\times \mathbb{C}^n \times \mathbb{C} \times G : |Im (z,w)|<p,}$
$\ds{|H|<p' \txt{ where } g = k_1 (\exp{iH}) k_2, k_1, k_2}$
$\ds{\in K, H \in \underline{h} \}}.$ Notice that the domain
$\Omega_{p,p'}$ is well defined since $|\cdot|$ is invariant under
the Weyl group action.

\begin{thm}\label{poisson}
Let $f \in L^1 \bigcap L^2(\mathbb{HM})$ be such that $f^{\la}$ is
compactly supported as a function of $\la.$ Then there exists a
constant $N$ such that for each $0<p<q,$ $h =
e^{-q\Delta^{\frac{1}{2}}} f$ extends to a holomorphic function on
the domain $\Omega_{\frac{p}{2},\frac{p}{N\sqrt{2}}}$ and \beas &&
\int_K \int_K \int_{|Im (z,w)|=r} \int_{\mathbb{H}^n}
|h(X \cdot (z,w,\tau,k_1\exp{(iH)}k_2))|^2 dX d\mu_r dk_1 dk_2 \\
&=& \sum_{\pi \in \widehat{K}} d_{\pi} \chi_{\pi}(\exp 2iH)
\sum_{m=0}^{\infty} \sum_{a=1}^{A_m} (\dim \mathcal{P}_{m})^{-1}
\int_{\mathbb{R}} L_m^{n-1}(-2\la r^2) e^{\la r^2} e^{2 \la Im \tau} \\
&& e^{-2q((2m+n)|\la| + \la^2 + \lambda_{\pi})^{\frac{1}{2}}}
\sum_{i,j = 1}^{d_\pi} \| (f_{ij}^{\pi})^{\la} *_{\la}
\varphi_{ma}^{\la} \|^2 d\la \eeas where $\mu_r$ is the normalized
surface area measure on the sphere $\{ |Im (z,w)| = r \} \subseteq
\mathbb{R}^{2n}$ for $r<\frac{p}{2}$ and $L_m^{n-1}$ are the
Laguerre polynomials of type $(n-1).$

Conversely, there exists a fixed constant $V$ such that if $h$ is
a holomorphic function on the domain
$\ds{\Omega_{q,\frac{2q}{V}}},$ $h^{\la}$ is compactly supported
as a function of $\la$ and for each $r<q$ $$\int_K \int_K
\int_{|Im (z,w)|=r} \int_{\mathbb{H}^n} |h(X \cdot
(z,w,\tau,k_1\exp{(iH)}k_2))|^2 dX d\mu_r dk_1 dk_2 < \infty,$$
then for every $p<q,$ there exists $f \in L^2(\mathbb{HM})$ such
that $h = e^{-p\Delta^{\frac{1}{2}}} f.$
\end{thm}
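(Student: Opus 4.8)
The plan is to prove the forward (norm-equality) direction by reducing the integral on $\mathbb{HM}$ to a product of two Gutzmer-type computations: one on $\mathbb{C}^{2n}$ with respect to the $K$-action, governed by Theorem~\ref{Gutzmer}, and one on the complexification $G$, governed by Lassalle's formula (Theorem~\ref{lassalle}). First I would start from the explicit expansion of $h = e^{-q\Delta^{\frac{1}{2}}}f$ given just before the statement. Because $\Delta = -\Delta_{\mathbb{H}^n} - \Delta_K$ and the two Laplacians commute, the semigroup $e^{-q\Delta^{\frac{1}{2}}}$ acts diagonally across the Peter--Weyl decomposition in the $K$-variable and across the special Hermite decomposition in the $\mathbb{H}^n$-variable, producing the scalar factor $e^{-q((2m+n)|\la| + \la^2 + \lambda_{\pi})^{\frac{1}{2}}}$. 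The left-hand side of the asserted identity integrates $|h|^2$ over four blocks of variables: the Heisenberg variable $X$, the sphere $\{|Im(z,w)|=r\}$ in $\mathbb{R}^{2n}$, and the two $K$-copies $k_1,k_2$ coming from the $G$-factor written as $k_1(\exp iH)k_2$. I would integrate in stages, disentangling the $K\times K$ integration from the $\mathbb{R}^{2n}$ integration.

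\textbf{Applying the two Gutzmer formulae.} The key observation is that $h$ factors (via the series) into a product of a holomorphic function on $\mathbb{C}^{2n+1}$ and a holomorphic function on $G$; on the $K\times K$ directions the character coefficients $\phi_{ij}^{\pi}(k_1\exp(iH)k_2)$ are exactly what Theorem~\ref{lassalle} is designed to handle, yielding the factor $\sum_{\pi}d_\pi\chi_\pi(\exp 2iH)$ together with Hilbert--Schmidt norms of the Fourier coefficients. Simultaneously, on the $\mathbb{C}^{2n}$ directions the inner $\int_{\mathbb{R}^{2n}}$ over the real slice, combined with the weight $e^{\la(u\cdot y - v\cdot x)}$ that is implicit in the holomorphic structure of the twisted heat kernel, is precisely the left-hand side of Theorem~\ref{Gutzmer}. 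Applying Theorem~\ref{Gutzmer} converts that integral into $\sum_{m,a}(\dim\mathcal{P}_{ma})^{-1}\varphi_{ma}^{\la}(2iy,2iv)\,\|(f_{ij}^\pi)^\la *_\la \varphi_{ma}^\la\|^2$. The remaining step is to integrate $\varphi_{ma}^{\la}(2iy,2iv)$ over the sphere $\{|Im(z,w)|=r\}$ against $d\mu_r$; since $\varphi_{ma}^{\la}$ is built from the Laguerre function $L_m^{n-1}$ evaluated on $|x|^2+|u|^2$, its holomorphic extension evaluated at $(2iy,2iv)$ and averaged over the sphere of radius $r$ produces the factor $L_m^{n-1}(-2\la r^2)e^{\la r^2}$. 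The factor $e^{2\la\,Im\,\tau}$ arises from holomorphically extending the central variable $t\mapsto\tau$ and collecting the $e^{i\la t}$ term.

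\textbf{The converse and the domain bookkeeping.} For the converse, I would run the argument backwards: given a holomorphic $h$ on $\Omega_{q,2q/V}$ with $h^\la$ compactly supported in $\la$ and finite sliced $L^2$-norms for all $r<q$, the same two Gutzmer identities show that the double series in $(m,a)$ and $\pi$ converges, which lets one read off coefficients $(f_{ij}^\pi)^\la *_\la \varphi_{ma}^\la$ that assemble (via the inverse special Hermite expansion and Peter--Weyl synthesis) into an $f\in L^2(\mathbb{HM})$ with $h = e^{-p\Delta^{\frac{1}{2}}}f$ for every $p<q$; the gap between $p$ and $q$ absorbs the convergence loss, exactly as in Goodman's scheme \cite{G1,G2}. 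The technical heart, and the step I expect to be the main obstacle, is the \emph{domain and constant bookkeeping}: controlling the radii of holomorphic extension simultaneously in the Heisenberg directions (radius $p/2$) and in the $\exp(iH)$ directions (radius $p/(N\sqrt 2)$), and matching the constants $N$ and $V$ so that Lassalle's formula applies on $K\exp(i\Omega_r)K$ while Theorem~\ref{Gutzmer} applies on the corresponding polydisc in $\mathbb{C}^{2n}$. One must verify that the growth of $\chi_\pi(\exp 2iH)$ (exponential in $|H|$ via the highest weight) is dominated by the decay $e^{-2q\lambda_\pi^{1/2}}$ uniformly for $|H|$ below the stated bound, and likewise that the Laguerre factor $L_m^{n-1}(-2\la r^2)e^{\la r^2}$ does not outpace $e^{-2q(2m+n)^{1/2}|\la|^{1/2}}$; the compact support of $f^\la$ in $\la$ is what makes these uniform estimates tractable, and keeping the two estimates consistent through a single pair of constants is the delicate part.
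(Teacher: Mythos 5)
Your outline of the forward direction matches the paper's proof in structure: the identity is obtained by combining Lassalle's formula on $K$ (for the $k_1,k_2$ integrations) with Theorem \ref{Gutzmer} on $\mathbb{C}^{2n}$, and the spherical average of $\varphi_{ma}^{\la}(2iy,2iv)$ over $|Im(z,w)|=r$ does produce the factor $L_m^{n-1}(-2\la r^2)e^{\la r^2}$ via the $U(n)$-spherical function identity, while the shift of the central variable under left translation accounts for the weight $e^{\la(u\cdot y-v\cdot x)}$ and the factor $e^{2\la\,Im\,\tau}$. For the holomorphicity and the constant $N$ you correctly identify the needed comparison between the growth of $\chi_\pi(\exp 2iH)$ and the decay in $\sqrt{\lambda_\pi}$; the paper realizes this through Hall's estimates $\lambda_\pi\ge A|\mu|^2$ and $|\chi_\pi(\exp iY)|\le d_\pi e^{M|Y||\mu|}$, giving $N=M/\sqrt{A}$, together with Perron's asymptotics to sum the Laguerre series.

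The genuine gap is in the converse. Saying that one ``runs the argument backwards'' and that the gap between $p$ and $q$ absorbs the convergence loss skips the step that actually carries the proof. The Gutzmer identity only tells you that the weighted sum $\sum_\pi d_\pi\chi_\pi(\exp 2iH)\sum_{m,a}(\dim\mathcal{P}_m)^{-1}\int L_m^{n-1}(-2\la r^2)e^{\la r^2}e^{2\la s}\sum_{i,j}\|(h_{ij}^\pi)^\la *_\la\varphi_{ma}^\la\|^2\,d\la$ is finite; to conclude that $(f_{ij}^\pi)^\la := e^{2p((2m+n)|\la|+\la^2+\la_\pi)^{1/2}}(h_{ij}^\pi)^\la$ assembles into an $L^2$ function you need \emph{lower} bounds on these weights, not upper bounds. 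The paper supplies two: (i) the spherical average $\int_{|H|=\vartheta}\chi_\pi(\exp 2iH)\,d\sigma_\vartheta(H)\ge U e^{V\vartheta\sqrt{\lambda_\pi}}$, proved by diagonalizing $\pi$ on $\underline{a}$, keeping only the highest-weight exponential $e^{-2\langle\mu,H\rangle}$ from the (positive) sum, and evaluating its spherical average as a Bessel function --- this is precisely where the constant $V$, and hence the domain $\Omega_{q,\frac{2q}{V}}$, comes from; and (ii) Perron's formula for $L_m^{n-1}$ on the negative real axis, which gives $L_m^{n-1}(-2\la r^2)e^{\la r^2}\gtrsim e^{2\varsigma((2m+n)|\la|)^{1/2}}$ for $\varsigma<r$. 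Without these two lower bounds the finiteness hypothesis yields no decay of the coefficients in $\pi$ or in $m$, and the candidate $f$ cannot be shown to lie in $L^2(\mathbb{HM})$. Your proposal treats this as ``constant bookkeeping,'' but it is the substantive content of the converse and is not recoverable from the forward estimates, which go in the opposite direction.
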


\begin{proof}
First, we prove the holomorphicity of
$e^{-q\Delta^{\frac{1}{2}}}f$ on
$\Omega_{\frac{p}{2},\frac{p}{N\sqrt{2}}}$ for $0<p<q$ by proving
uniform convergence of the same on compact subsets. So, we
consider a compact subset $M \subseteq
\Omega_{\frac{p}{2},\frac{p}{N\sqrt{2}}}.$

Since \beas e^{-q((2m+n)|\la| + \la^2 +
\lambda_{\pi})^{\frac{1}{2}}} &\leq& e^{-\frac{q
\sqrt{\lambda_{\pi}}}{\sqrt{2}}}
e^{-\frac{q}{\sqrt{2}}((2m+n)|\la| + \la^2)^{\frac{1}{2}}} \\
&\leq& e^{-\frac{q \sqrt{\lambda_{\pi}}}{\sqrt{2}}}
e^{-\frac{q}{2}((2m+n)|\la|)^{\frac{1}{2}}} e^{-\frac{q
|\la|}{2}},\eeas for $(z,w,\tau,g) = (x+iy,u+iv,t+is,ke^{iH}) \in
M \subset \mathbb{C}^{n} \times \mathbb{C}^n \times \mathbb{C}
\times G,$ we have \beas && \left|
e^{-q\Delta^{\frac{1}{2}}}f(z,w,\tau,g) \right| \\
&\leq& C \sum_{\pi \in \widehat{K}} d_{\pi} \sum_{i,j = 1}^{d_\pi}
|\phi_{ij}^{\pi}(g)| e^{-\frac{q \sqrt{\lambda_{\pi}}}{\sqrt{2}}}
\int_{\mathbb{R}} \left( \sum_{m=0}^{\infty} \sum_{a=1}^{A_m}
e^{-\frac{q}{2}((2m+n)|\la|)^{\frac{1}{2}}}
\left|(f_{ij}^{\pi})^{\la} *_{\la} \varphi_{ma}^{\la}(z,w) \right| \right) \\
&& e^{|\la|(|s|-\frac{q}{2})} |\la|^n d\la . \eeas Now, it can be
seen that for a fixed $\la,$ $$ |\varphi_{ma}^{\la}(z,w)|^2 \leq C
\dim \mathcal{P}_{ma} e^{\la(u \cdot y - v \cdot x)}
\varphi_{ma}^{\la}(2iy,2iv).$$ It follows that $$
\left|(f_{ij}^{\pi})^{\la} *_{\la} \varphi_{ma}^{\la}(z,w) \right|
\leq e^{\frac{\la}{2}(u \cdot y - v \cdot x)} \|
(f_{ij}^{\pi})^{\la}\|_1 \left( \dim \mathcal{P}_{ma}
\right)^{\frac{1}{2}} \left( \varphi_{ma}^{\la}(2iy,2iv)
\right)^{\frac{1}{2}}.$$ So, we get that \beas &&
\sum_{m=0}^{\infty} \sum_{a=1}^{A_m}
e^{-\frac{q}{2}((2m+n)|\la|)^{\frac{1}{2}}}
\left|(f_{ij}^{\pi})^{\la} *_{\la} \varphi_{ma}^{\la}(z,w) \right| \\
&\leq& e^{\frac{\la}{2}(u \cdot y - v \cdot x)} \|
(f_{ij}^{\pi})^{\la}\|_1 \sum_{m=0}^{\infty} \sum_{a=1}^{A_m}
e^{-\frac{q}{2}((2m+n)|\la|)^{\frac{1}{2}}} \left( \dim
\mathcal{P}_{ma} \right)^{\frac{1}{2}} \left(
\varphi_{ma}^{\la}(2iy,2iv) \right)^{\frac{1}{2}}. \eeas Applying
Cauchy-Schwarz inequality to the above and noting that
$\ds{\varphi_{m}^{\la}(2iy,2iv) = }$ $\ds{\sum_{a=1}^{A_m}
\varphi_{ma}^{\la}(2iy,2iv)}$ (see \cite{SR} for details) and
$\ds{\dim \mathcal{P}_{m} = }$ $\ds{\sum_{a=1}^{A_m} \dim
\mathcal{P}_{ma} = \frac{(m+n-1)!}{m! (n-1)!}}$ we get that \beas
&& \sum_{m=0}^{\infty} \sum_{a=1}^{A_m}
e^{-\frac{q}{2}((2m+n)|\la|)^{\frac{1}{2}}}
\left|(f_{ij}^{\pi})^{\la} *_{\la} \varphi_{ma}^{\la}(z,w) \right| \\
&\leq& e^{\frac{\la}{2}(u \cdot y - v \cdot x)} \|
(f_{ij}^{\pi})^{\la}\|_1 \sum_{m=0}^{\infty}
e^{-\frac{q}{2}((2m+n)|\la|)^{\frac{1}{2}}} \left(
\frac{(m+n-1)!}{m! (n-1)!} \right)^{\frac{1}{2}} \left(
\varphi_{m}^{\la}(2iy,2iv) \right)^{\frac{1}{2}}. \eeas As in the
proof of Theorem 5.1 of \cite{T1}, for any fixed $(y,v)$ with
$|y|^2 + |v|^2 \leq r^2 < \frac{p^2}{4} < \frac{q^2}{4},$ the
above series is bounded by a constant times $$ \sum_{m=0}^{\infty}
m^{\frac{n-1}{2}} m^{\frac{n-1}{4}-\frac{1}{8}}
e^{-((2m+n)|\la|)^{\frac{1}{2}}(\frac{q}{2}-r)}$$ which certainly
converges if $\ds{r<\frac{p}{2}<\frac{q}{2}.}$ Moreover, using the
fact that $\ds{\|(f_{ij}^{\pi})^{\la}\|_1 \leq \|f\|_1}$ and
$f^{\la}$ is compactly supported as a function of $\la,$ we can
conclude that \beas && \left|
e^{-q\Delta^{\frac{1}{2}}}f(z,w,\tau,g) \right| \\
&\leq& C \sum_{\pi \in \widehat{K}} d_{\pi} \sum_{i,j = 1}^{d_\pi}
|\phi_{ij}^{\pi}(g)| e^{-\frac{q \sqrt{\lambda_{\pi}}}{\sqrt{2}}}.
\eeas For $g = ke^{iH},$ we have $\ds{\phi_{ij}^{\pi}(ke^{iH})=
\sum_{l=1}^{d_{\pi}} \phi_{il}^{\pi}(k) \phi_{lj}^{\pi}(e^{iH}).}$
Since $\pi(k)$ is unitary for $k \in K$ and $\pi(e^{iH})$ is
self-adjoint for $H \in \underline{h},$ it follows that
$\ds{\sum_{l=1}^{d_{\pi}} |\phi_{il}^{\pi}(k)|^2=1}$ and \beas
\sum_{l,j=1}^{d_{\pi}} |\phi_{lj}^{\pi}(e^{iH})|^2 &=&
\sum_{l,j=1}^{d_{\pi}} \left\langle {\pi}(e^{iH}) e_l, e_j
\right\rangle
\overline{\left\langle {\pi}(e^{iH}) e_l, e_j \right\rangle} \\
&=& \sum_{l,j=1}^{d_{\pi}} \left\langle {\pi}(e^{iH}) e_l, e_j
\right\rangle \left\langle e_j,{\pi}(e^{iH}) e_l \right\rangle \\
&=& \sum_{l=1}^{d_{\pi}} \left\langle {\pi}(e^{iH}) e_l,
{\pi}(e^{iH}) e_l \right\rangle
\\ &=&  \chi_{\pi}(\exp 2iH)\eeas where $e_1, e_2, \cdots , e_{d_{\pi}}$
is an orthonormal basis of the Hilbert space $\mathcal{H}_{\pi}$
on which $\pi$ acts. Now, using Cauchy-Schwarz inequality we get
that \beas && \left|
e^{-q\Delta^{\frac{1}{2}}}f(z,w,\tau,g) \right| \\
&\leq& C \sum_{\pi \in \widehat{K}} d_{\pi}^{\frac{5}{2}} \left(
\chi_{\pi}(\exp {2iH}) \right)^{\frac{1}{2}} e^{-\frac{q
\sqrt{\lambda_{\pi}}}{\sqrt{2}}}.\eeas From Lemma 6 and 7 of
\cite{H} we know that there exist constants $A,$ $B,$ $C$ and $M$
such that $\lambda_{\pi} \geq A|\mu|^2,$ $d_{\pi} \leq
B(1+|\mu|^C)$ and $|\chi_{\pi}(\exp{iY})| \leq d_{\pi}
e^{M|Y||\mu|}$ where $\mu$ is the highest weight of $\pi.$ Hence
we have $$ |\chi_{\pi}(\exp 2iH)| \leq d_{\pi} e^{2M|H||\mu|} \leq
d_{\pi} e^{2N|H| \sqrt{\lambda_{\pi}}}$$ where
$N=\frac{M}{\sqrt{A}}.$ It follows that \beas && \left|
e^{-q\Delta^{\frac{1}{2}}}f(z,w,\tau,g) \right| \\
&\leq& C \sum_{\pi \in \widehat{K}} B^3 \left( 1+ \left(
\frac{\la_{\pi}}{A} \right)^{\frac{C}{2}} \right)^3
e^{\sqrt{\lambda_{\pi}}\left( N|H|-\frac{q}{\sqrt{2}} \right)
}\eeas which is finite as long as $\ds{|H|<\frac{q}{N\sqrt{2}}}.$
Hence we have proved that $e^{-q\Delta^{\frac{1}{2}}} f$ extends
to a holomorphic function on the domain
$\Omega_{\frac{p}{2},\frac{p}{N\sqrt{2}}}$ for $p<q.$

Now, we prove the equality in Theorem \ref{poisson}. It should be
noted that the domain $\Omega_{\frac{p}{2},\frac{p}{N\sqrt{2}}}$
is invariant under left translation by the Heisenberg motion group
$\mathbb{HM}.$ For $X=(x',u',t',k') \in \mathbb{HM},$
$(z,w,\tau,g) = (x+iy,u+iv,t+is,k_1\exp{(iH)}k_2) \in
\Omega_{\frac{p}{2},\frac{p}{N\sqrt{2}}} \subseteq \mathbb{C}^{n}
\times \mathbb{C}^n \times \mathbb{C} \times G$ and a function $F$
holomorphic on $\Omega_{\frac{p}{2},\frac{p}{N\sqrt{2}}},$ by
Gutzmer's formula on $K$ we have \beas && \int_K \int_K \int_{|Im
(z,w)|=r} \int_{\mathbb{H}^n}
|F(X \cdot (z,w,\tau,k_1\exp{(iH)}k_2))|^2 dX d\mu_r dk_1 dk_2 \\
&=& \sum_{\pi \in \widehat{K}} d_{\pi} \sum_{i,j = 1}^{d_\pi}
\int_{|Im (z,w)|=r} \int_{\mathbb{R}} \int_{\mathbb{R}^n}
\int_{\mathbb{R}^n}
|F_{ij}^{\pi}(x'+iy,u'+iv,t'+i(s+\frac{1}{2}(u' \cdot y - v \cdot
x')))|^2 \\ && \chi_{\pi}(\exp 2iH) dx' du' dt' d\mu_r.\eeas It is
easy to see that $\ds{\frac{1}{\dim \mathcal{P}_{ma}} \int_{U(n)}
\varphi_{ma}^{\la}(k \cdot(x,u)) dk}$ is a $U(n)$-spherical
function. So it is obvious that $\ds{ \frac{1}{\dim
\mathcal{P}_{ma}} \int_{U(n)} \varphi_{ma}^{\la}(k \cdot(x,u)) dk
= \frac{1}{\dim \mathcal{P}_{m}} \varphi_{m}^{\la}(x,u)}.$ By
analytic continuation on both sides we get $$\ds{\frac{1}{\dim
\mathcal{P}_{ma}} \int_{U(n)} \varphi_{ma}^{\la}(k \cdot(2iy,2iv))
dk = \frac{1}{\dim \mathcal{P}_{m}}}
\ds{\varphi_{m}^{\la}(2iy,2iv)}.$$ Hence it follows that the
integral over $U(n)$ can be seen as an integral over the sphere
$|y|^2+|v|^2=r^2$ such that $$ \frac{1}{\dim \mathcal{P}_{ma}}
\int_{|y|^2+|v|^2=r^2} \varphi_{ma}^{\la}(2iy,2iv) d\mu_r =
\frac{1}{\dim \mathcal{P}_{m}} L_m^{n-1}(-2\la r^2) e^{\la r^2}.$$
So, from Theorem \ref{Gutzmer} we have \beas &&
\int_{|y|^2+|v|^2=r^2} \int_{\mathbb{R}^{2n}}
|(F_{ij}^{\pi})^{\la}(x+iy,u+iv)|^2 e^{\la(u \cdot y - v \cdot x)}
dx du d\mu_r
\\ &=& \sum_{m=0}^{\infty} \sum_{a=1}^{A_m} (\dim
\mathcal{P}_{m})^{-1} L_m^{n-1}(-2\la r^2) e^{\la r^2} \|
(F_{ij}^{\pi})^{\la} *_{\la} \varphi_{ma}^{\la} \|^2 \eeas It
follows that \beas && \int_K \int_K \int_{|Im (z,w)|=r}
\int_{\mathbb{H}^n}
|F(X \cdot (z,w,\tau,k_1\exp{(iH)}k_2))|^2 dX d\mu_r dk_1 dk_2 \\
&=& \sum_{\pi \in \widehat{K}} d_{\pi} \chi_{\pi}(\exp 2iH)
\sum_{m=0}^{\infty} \sum_{a=1}^{A_m} (\dim \mathcal{P}_{m})^{-1}
\int_{\mathbb{R}} L_m^{n-1}(-2\la r^2) e^{\la r^2} e^{2 \la s} \\
&&  \left( \sum_{i,j = 1}^{d_\pi} \| (F_{ij}^{\pi})^{\la} *_{\la}
\varphi_{ma}^{\la} \|^2 \right) ~ d\la. \eeas Hence for $h =
e^{-q\Delta^{\frac{1}{2}}} f$ we get the first part of Theorem
\ref{poisson}.

To prove the converse, we first show that for any $0 < \vartheta <
\infty,$ there exist constants $U,V$ such that \bea \label{e7}
\int_{|H|=\vartheta} \chi_{\pi} (\exp {2iH})
d\sigma_{\vartheta}(H) \geq d_{\pi} U
e^{V\vartheta\sqrt{\lambda_{\pi}}}\eea where
$d\sigma_{\vartheta}(H)$ is the normalized surface measure on the
sphere $\{ H \in \underline{h} : |H| = {\vartheta}\} \subseteq
\mathbb{R}^m$ and $m = \dim \underline{h}.$ If $H \in
\underline{a},$ then there exists a non-singular matrix $Q$ and
pure-imaginary valued linear forms $\nu_1, \nu_2, \cdots ,
\nu_{d_{\pi}}$ on $\underline{a}$ such that $$ Q \pi(H) Q^{-1} =
diag(\nu_1(H), \nu_2(H), \cdots , \nu_{d_{\pi}}(H))$$ where
$diag(a_1, a_2, \cdots , a_k)$ denotes $k \times k$ diagonal
matrix with diagonal entries $a_1, a_2, \cdots , a_k.$ Now,
$\nu(H)=i\langle \nu, H\rangle$ where $\nu$ is a weight of $\pi.$
Then $$ \exp (2i Q \pi(H) Q^{-1}) = Q \exp(2i\pi(H)) Q^{-1} =
diag(e^{2i\nu_1(H)}, e^{2i\nu_2(H)}, \cdots ,
e^{2i\nu_{d_{\pi}}(H)}).$$ Hence \beas \chi_{\pi}(\exp2iH) &=& Tr
(Q \exp(2i\pi(H)) Q^{-1}) \\ &=& e^{-2\langle \nu_1, H\rangle} +
e^{-2\langle \nu_2, H\rangle} + \cdots + e^{-2\langle
\nu_{d_{\pi}}, H\rangle} \\ &\geq& e^{-2\langle \mu,
H\rangle}\eeas where $\mu$ is the highest weight corresponding to
$\pi.$ Integrating the above over $|H|={\vartheta}$ we get \beas
\int_{|H|={\vartheta}} \chi_{\pi} (\exp {2iH})
d\sigma_{\vartheta}(H) &\geq&
\int_{|H|={\vartheta}}  e^{-2\langle \mu, H\rangle} d\sigma_{\vartheta}(H)\\
&=&
\frac{J_{\frac{m}{2}-1}(2i{\vartheta}|\mu|)}{(2i{\vartheta}|\mu|)^{\frac{m}{2}-1}}
\\ &\geq& U e^{{\vartheta}|\mu|}\eeas where $J_{\frac{m}{2}-1}$ is the Bessel function of
order ${(\frac{m}{2}-1)}.$ It is known that $\lambda_{\pi} \approx
|\mu|^2,$ hence we have \bea \label{ch} \int_{|H|={\vartheta}}
\chi_{\pi} (\exp {2iH}) d\sigma_{\vartheta}(H) \geq U ~
e^{V{\vartheta}\sqrt{\lambda_{\pi}}}\eea for some $V.$ Consider
the domain $\ds{\Omega_{q,\frac{2q}{V}}}$ for this $V.$ Let $h$ be
a holomorphic function on the domain
$\ds{\Omega_{q,\frac{2q}{V}}}$ such that $h^{\la}$ is compactly
supported as a function of $\la$ and for $r< q,$
$$\int_K \int_K \int_{|Im (z,w)|=r} \int_{\mathbb{H}^n} |h(X \cdot
(z,w,\tau,k_1\exp{(iH)}k_2))|^2 dX d\mu_r dk_1 dk_2 < \infty.$$
So, as before it follows that \beas && \sum_{\pi \in \widehat{K}}
d_{\pi} \chi_{\pi}(\exp 2iH) \sum_{m=0}^{\infty} \sum_{a=1}^{A_m}
(\dim \mathcal{P}_{m})^{-1}
\int_{\mathbb{R}} L_m^{n-1}(-2\la r^2) e^{\la r^2} e^{2 \la s} \\
&& \left( \sum_{i,j = 1}^{d_\pi} \| (h_{ij}^{\pi})^{\la} *_{\la}
\varphi_{ma}^{\la} \|^2 \right) ~ d\la ~ < ~ \infty ~~~ \txt{ for
} r< q. \eeas Integrating over $|H|={\vartheta}$ for $\vartheta <
\ds{\frac{2q}{V}}$ and using (\ref{ch}), it also follows that
\beas && \sum_{\pi \in \widehat{K}} d_{\pi}
e^{V{\vartheta}\sqrt{\lambda_{\pi}}} \sum_{m=0}^{\infty}
\sum_{a=1}^{A_m} (\dim \mathcal{P}_{m})^{-1}
\int_{\mathbb{R}} L_m^{n-1}(-2\la r^2) e^{\la r^2} e^{2 \la s} \\
&& \left( \sum_{i,j = 1}^{d_\pi} \| (h_{ij}^{\pi})^{\la} *_{\la}
\varphi_{ma}^{\la} \|^2 \right) ~ d\la ~ < ~ \infty ~~~ \txt{ for
} r< q. \eeas Now, Perron's formula (Theorem 8.22.3 of \cite{Sz})
gives
$$ L_m^{\alpha}(\zeta) = \frac{1}{2} \pi^{-\frac{1}{2}}
e^{\frac{\zeta}{2}} (-\zeta)^{-\frac{\alpha}{2} - \frac{1}{4}}
m^{\frac{\alpha}{2} - \frac{1}{4}} e^{2(-m \zeta)^{\frac{1}{2}}}
\left( 1 + O(m^{-\frac{1}{2}}) \right)$$ valid for $\zeta$ in the
complex plane cut along the positive real axis. Note that we
require the formula when $\zeta < 0.$ So, using the fact that
$\ds{\dim \mathcal{P}_{m} = \frac{(m+n-1)!}{m!(n-1)!}}$ and
Perron's formula we get that $$ \sum_{\pi \in \widehat{K}} d_{\pi}
e^{V{\vartheta}\sqrt{\lambda_{\pi}}} \sum_{m=0}^{\infty}
\int_{\mathbb{R}} |\la|^{2n} e^{2 \varsigma
((2m+n)|\la|)^{\frac{1}{2}}} e^{2 \la s} \sum_{i,j = 1}^{d_\pi} \|
(h_{ij}^{\pi})^{\la} *_{\la} \varphi_{ma}^{\la} \|^2 d\la ~ < ~
\infty$$ for $\varsigma < r< q$ and $\vartheta <
\ds{\frac{2q}{V}}.$ For $p<q,$ defining $(f_{ij}^{\pi})^{\la}$ by
$(f_{ij}^{\pi})^{\la} = e^{2p \left( (2m+1)|\la| + \la^2 +
\la_{\pi} \right)^{\frac{1}{2}}}$ $(h_{ij}^{\pi})^{\la}$ and using
the inequality $$\ds{e^{2p \left( (2m+1)|\la| + \la^2 + \la_{\pi}
\right)^{\frac{1}{2}}} \leq e^{2p
\left((2m+1)|\la|\right)^{\frac{1}{2}}} e^{2p|\la|} e^{2p
\sqrt{\la_{\pi}}}}$$ we obtain
$$ f(x,u,t,k) = \sum_{\pi \in \widehat{K}} d_{\pi} \sum_{i,j =
1}^{d_\pi} f_{ij}^{\pi}(x,u,t) \phi_{ij}^{\pi}(k) \in
L^2(\mathbb{HM})$$ and $h = e^{-p\Delta^{\frac{1}{2}}} f.$
\end{proof}

\subsection{A generalization of the Segal-Bargmann
transform}\hspace{0.3in}

In \cite{H} Brian C. Hall proved the following generalizations of
the Segal-Bargmann transfoms for $\mathbb{R}^n$ and compact Lie
groups :

\begin{thm}\label{hall}

\item [(I)] Let $\mu$ be any measurable function on $\mathbb{R}^n$
such that
\begin{itemize}
\item $\mu$ is strictly positive and locally bounded away from
zero, \item $ \ds{ \forall ~  x \in \mathbb{R}^n, ~~ \sigma(x) =
\int_{\mathbb{R}^n} e^{2x \cdot y} \mu(y) dy < \infty .}$
\end{itemize}
Define, for $z \in \mathbb{C}^n$ $$ \psi(z) = \int_{\mathbb{R}^n}
\frac{e^{ia(y)}}{\sqrt{\sigma(y)}} e^{-iy \cdot z} dy,$$ where $a$
is a real valued measurable function on $\mathbb{R}^n.$ Then the
mapping $C_{\psi} : L^2(\mathbb{R}^n) \rightarrow
\mathcal{O}(\mathbb{C}^n)$ defined by $$ C_{\psi}(z) =
\int_{\mathbb{R}^n} f(x) \psi(z-x) dx $$ is an isometric
isomorphism of $L^2(\mathbb{R}^n)$ onto $\mathcal{O}(\mathbb{C}^n)
\bigcap L^2(\mathbb{C}^n,dx \mu(y)dy).$

\item [(II)] Let $K$ be a compact Lie group and $G$ be its
complexification. Let $\nu$ be a measure on $G$ such that
\begin{itemize}
\item $\nu$ is bi$-K$-invariant, \item $\nu$ is given by a
positive density which is locally bounded away from zero, \item
For each irreducible representation $\pi$ of $K,$ analytically
continued to $G,$ $$ \delta(\pi) = \frac{1}{dim V_{\pi}} \int_{G}
\|\pi(g^{-1})\|^2 d\nu(g) < \infty.$$
\end{itemize}
Define $ \ds {\tau (g) = \sum _ {\pi \in \widehat{K}}
\frac{d_{\pi}}{\sqrt{\delta(\pi)}} Tr (\pi(g^{-1}) U_{\pi})} $
where $g \in G$ and $U_{\pi}$'s are arbitrary unitary matrices.
Then the mapping $$C_\tau f(g) = \int_{K} f(k) \tau(k^{-1}g)dk$$
is an isometric isomorphism of $L^2(K)$ onto $\ds{\mathcal{O}(G)
\bigcap L^2(G, d\nu(w)).}$

\end{thm}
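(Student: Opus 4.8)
The plan is to treat both parts in parallel, reducing each to a Plancherel-type identity on the underlying group in which the weight defining the target space is cancelled exactly by the normalizing factor that was built into $\psi$ (respectively $\tau$). The whole scheme hinges on one arithmetic coincidence, namely that $\sigma(\eta)$ in (I) and $\delta(\pi)$ in (II) are defined precisely so as to absorb the growth of the weight.

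For part (I) I would first note that $C_\psi f = f * \psi$, so that $\psi$ is the inverse Fourier transform of $m(\eta) := e^{ia(\eta)}/\sqrt{\sigma(\eta)}$. Writing $z=x+iy$ and inserting the defining integral for $\psi$, an interchange of the order of integration gives
\begin{equation*}
C_\psi f(x+iy) = \int_{\mathbb{R}^n} m(\eta)\, e^{\eta \cdot y}\, \widetilde{f}(\eta)\, e^{-i\eta \cdot x}\, d\eta ,
\end{equation*}
which exhibits $C_\psi f(\cdot + iy)$ as the inverse Fourier transform in $x$ of $m(\eta)\,e^{\eta \cdot y}\,\widetilde{f}(\eta)$. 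The holomorphic extension to $\mathbb{C}^n$ is justified by the hypothesis that $\sigma(x)<\infty$ for every $x$, which controls the factor $e^{\eta\cdot y}$ and legitimizes differentiation under the integral sign. The isometry, which I regard as the heart of the matter, I would obtain by two applications of Plancherel: fixing $y$ and applying Plancherel in $x$ yields
\begin{equation*}
\int_{\mathbb{R}^n} |C_\psi f(x+iy)|^2\, dx = c \int_{\mathbb{R}^n} \frac{e^{2\eta \cdot y}}{\sigma(\eta)}\, |\widetilde{f}(\eta)|^2\, d\eta ,
\end{equation*}
and then integrating against $\mu(y)\,dy$ and using Tonelli to exchange order, the inner integral $\int_{\mathbb{R}^n} e^{2\eta \cdot y}\mu(y)\,dy$ is exactly $\sigma(\eta)$ by definition, so the weights cancel and one is left with $c\int |\widetilde{f}|^2 = c'\|f\|_2^2$. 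Surjectivity I would obtain by running the computation backwards: given $F \in \mathcal{O}(\mathbb{C}^n)\cap L^2(\mathbb{C}^n, dx\,\mu(y)dy)$, its boundary values on $\mathbb{R}^n$ have a Fourier transform, and dividing by $m(\eta)$ (permissible since $m$ never vanishes) produces a candidate $f$, whose holomorphy forces the two descriptions of $C_\psi f$ to agree.

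Part (II) is the exact group-theoretic analogue, with Plancherel replaced by the Peter--Weyl theorem. I would expand $f \in L^2(K)$ in matrix coefficients and compute $C_\tau f = f*\tau$ through $\widehat{C_\tau f}(\pi) = \widehat{f}(\pi)\,\widehat{\tau}(\pi)$, where the definition of $\tau$ forces $\widehat{\tau}(\pi)$ to equal $d_\pi U_\pi / \sqrt{\delta(\pi)}$. The extension to $G$ uses the universal property of the complexification, exactly as in the proof of Theorem \ref{sbt}, together with the convergence on compact subsets of $\sum_{\pi} d_\pi \|\pi(g^{-1})\|_{HS}^2 / \delta(\pi)$. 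For the norm identity, the bi-$K$-invariance of $\nu$ reduces the $g$-integration to $\int_G \|\pi(g^{-1})\|^2\, d\nu(g) = d_\pi\,\delta(\pi)$, which cancels the $1/\delta(\pi)$ normalization and, via Schur orthogonality and Peter--Weyl, returns $\|f\|_{L^2(K)}^2$; surjectivity once again follows by inverting the Fourier transform coefficient by coefficient.

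The main obstacle in both parts is not the formal cancellation, which is almost tautological, but the analytic justification underneath it: that the defining integral for $\psi$ (respectively the series for $\tau$) converges to a genuinely holomorphic function, and that every interchange of integration and summation is legitimate. In part (I) this rests entirely on the finiteness of $\sigma$, while in part (II) it rests on the finiteness of $\delta(\pi)$ together with the polynomial bounds on $d_\pi$ and $\lambda_\pi$ (Lemmas 6 and 7 of \cite{H}) needed to estimate $\|\pi(g^{-1})\|_{HS}$ uniformly on compact subsets of $G$.
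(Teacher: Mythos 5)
This statement is Theorem~\ref{hall}, which the paper does not prove at all: it is Hall's theorem, imported verbatim with a citation to \cite{H}, so there is no internal proof to compare your attempt against. Judged on its own, your sketch correctly reproduces the strategy of Hall's original argument: in (I) the two-step Plancherel computation in which $\int_{\mathbb{R}^n}e^{2\eta\cdot y}\mu(y)\,dy=\sigma(\eta)$ cancels the $1/\sigma(\eta)$ carried by $|\widetilde{\psi}|^2$, and in (II) the reduction, via bi-$K$-invariance and Schur orthogonality, of $\int_G|\mathrm{Tr}(\pi(g^{-1})A)|^2\,d\nu(g)$ to a multiple of $\delta(\pi)\|A\|_{HS}^2$, followed by Peter--Weyl. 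Two points are glossed over and deserve to be made explicit. First, the convergence and holomorphy claims in (I) do not follow from the finiteness of $\sigma$ alone; one needs the quantitative lower bound $\sigma(\eta)\geq c_{\epsilon}\,e^{2\eta\cdot y_0-2\epsilon|\eta|}$ for every $y_0$ (obtained from the hypothesis that $\mu$ is locally bounded away from zero together with $\sigma(y_0)<\infty$), and it is this bound, applied with several well-chosen $y_0$ surrounding the given $y$, that makes $e^{2\eta\cdot y}/\sigma(\eta)$ integrable and justifies both the holomorphy of $C_{\psi}f$ and the interchange of integrals. Second, the surjectivity step in (I) as you state it presupposes that the boundary values of $F$ on $\mathbb{R}^n$ lie in a space where the Fourier transform and the division by $m(\eta)$ make sense; this again uses the local lower bound on $\mu$ to control $F(\cdot+iy)$ for small $y$ and then the isometry identity run in reverse to conclude that the candidate $f$ is in $L^2(\mathbb{R}^n)$. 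With those two points supplied, the outline is a faithful account of the proof in \cite{H}.
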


In a similar fashion, we prove a generalization of Theorem
\ref{sbt} for $\mathbb{HM}.$ For each non-zero $\la \in
\mathbb{R},$ let $W_{\la}$ be a $K$-invariant measurable function
on $\mathbb{R}^{2n}$ such that it satisfies the following
conditions : \begin{itemize} \item $W_{\la}$ is strictly positive
and locally bounded away from zero uniformly in $\la,$ \item For
each $0 \leq m<\infty$ and $1 \leq a \leq A_m,$ $$\ds{
\sigma_{m,a,\la} = \int_{\mathbb{R}^{2n}}
\varphi_{ma}^{\la}(2iy,2iv) W_{\la}(y,v) dy dv < \infty .}$$
\end{itemize} For $x, u \in \mathbb{R}^n,$ define $$ p^{\la}(x,u) =
\left(\frac{2\pi}{|\la|}\right)^{-\frac{n}{2}} \sum_{m=0}^{\infty}
\sum_{a=1}^{A_m} C_{m,a,\la} \left( \sigma_{m,a,\la}
\right)^{-\frac{1}{2}} \left( \dim \mathcal{P}_{ma}
\right)^{\frac{1}{2}} \varphi_{ma}^{\la}(x,u),$$ where
$|C_{m,a,\la}|=1.$ For $(x,u,t) \in \mathbb{H}^n,$ consider
$$p(x,u,t) = (4\pi)^{-n} \int_{\mathbb{R}} e^{-i\la t} p^{\la}(x,u)
e^{-\la^2} d\la.$$ Next, let $\nu, ~ \delta $ and $\tau$ be as in
Theorem \ref{hall} (II). Also define $\ds{\psi(X,k) =
p(X)\tau(k)}$ for $X \in \mathbb{H}^n, ~ k \in K.$ Since
$\sigma_{m,a,\la}$ has exponential growth, it can be proved in a
manner similar to Theorem \ref{sbt} and Theorem \ref{poisson} that
$\psi$ extends to a holomorphic function on $\mathbb{H}^n \times
G.$ Let $\mathcal{A}^{\la}(\mathbb{C}^{2n} \times G)$ be the
weighted Bergman space $\ds{\mathcal{A}^{\la}(\mathbb{C}^{2n}
\times G) = \{ F}$ entire
$$\txt{on } \ds{\mathbb{C}^{2n} \times G : \int_G
\int_{\mathbb{C}^{2n}} |F(x+iy,u+iv,g)|^2 W_{\lambda}(y,v)
e^{\la(u \cdot y - v \cdot x)} ~ dx ~ du ~ dy ~ dv ~ d\nu(g) <
\infty \}.}$$ Then using Theorem \ref{Gutzmer} it is easy to prove
the following analogue of Theorem \ref{hall} for $\mathbb{HM}$
following the methods in Theorem \ref{sbt}.

\begin{thm}
The mapping $$\ds{C_\psi f(Z,g) = \int_{\mathbb{HM}} f(X, k)
\phi((X,k)^{-1}(Z,g))dX dk}$$ is an isometric isomorphism of
$L^2(\mathbb{HM})$ onto $\ds{\int_{\mathbb{R}^*}^{\oplus}
\mathcal{A}^{\la}(\mathbb{C}^{2n} \times G) e^{2\la^2} d\la}.$
\end{thm}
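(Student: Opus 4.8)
The plan is to exploit the product structure $\psi(X,k)=p(X)\tau(k)$, which lets the transform $C_\psi$ split into an $\mathbb{H}^n$-part built from $p$ and the group-theoretic part $C_\tau$ on $K$ supplied by Theorem \ref{hall}(II). First I would carry out the group computation. Writing $X,Z\in\mathbb{H}^n$ and $k,g\in K$, the semidirect-product law gives $(X,k)^{-1}(Z,g)=(k^{-1}\cdot(X^{-1}Z),k^{-1}g)$, and since $p$ is built from the $K$-spherical Laguerre functions $\varphi_{ma}^{\la}$ it is $K$-invariant, so $\psi((X,k)^{-1}(Z,g))=p(X^{-1}Z)\,\tau(k^{-1}g)$. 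Hence
\[ C_\psi f(Z,g)=\int_{\mathbb{H}^n}\Big(\int_K f(X,k)\tau(k^{-1}g)\,dk\Big)\,p(X^{-1}Z)\,dX, \]
which is the $\mathbb{H}^n$-convolution with $p$ of the function $X\mapsto C_\tau(f(X,\cdot))(g)$. This already yields the holomorphic extension: $\tau(k^{-1}g)$ continues holomorphically in $g\in G$ as in Theorem \ref{hall}(II), while convolution with $p$ continues holomorphically in $Z\in\mathbb{C}^{2n+1}$ exactly as in the proofs of Theorems \ref{sbt} and \ref{poisson}, convergence of the defining series being controlled by the exponential growth of $\sigma_{m,a,\la}$ together with the Peter--Weyl estimates used there.

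Next I would pass simultaneously to the central Fourier transform and the Peter--Weyl expansion. Writing $f(x,u,t,k)=\sum_\pi d_\pi\sum_{i,j}f_{ij}^\pi(x,u,t)\phi_{ij}^\pi(k)$ and using $(F*p)^{\la}=F^{\la}*_{\la}p^{\la}$, the operator $C_\psi$ becomes, on each $\la$-slice, the tensor product of the twisted convolution $G\mapsto G*_{\la}p^{\la}$ on $L^2(\mathbb{R}^{2n})$ with the map $C_\tau$ on $L^2(K)$; the scalar $e^{-\la^2}$ in the definition of $p$ survives as a multiplier on the slice. The $K$-factor is handled outright by Theorem \ref{hall}(II), which identifies $C_\tau$ as an isometric isomorphism of $L^2(K)$ onto $\mathcal{O}(G)\cap L^2(G,d\nu)$, so the whole problem reduces to showing that $G\mapsto G*_{\la}p^{\la}$ is an isometry of $L^2(\mathbb{R}^{2n})$ onto the Heisenberg Bergman space defined by the weight $W_{\la}(y,v)e^{\la(u\cdot y-v\cdot x)}$.

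To prove this I would first compute the Bergman norm of a holomorphic $F$ by Theorem \ref{Gutzmer}. Because $W_{\la}$ is $K$-invariant and $K\subset U(n)$ preserves the symplectic exponential $e^{\la(u\cdot y-v\cdot x)}$, I can insert an average $\int_K dk$ for free and apply Gutzmer's formula to obtain
\[ \int_{\mathbb{C}^{2n}}|F|^2\,W_{\la}(y,v)e^{\la(u\cdot y-v\cdot x)}\,dx\,du\,dy\,dv=\sum_{m,a}(\dim\mathcal{P}_{ma})^{-1}\sigma_{m,a,\la}\,\|F*_{\la}\varphi_{ma}^{\la}\|^2. \]
Then, using that the $\varphi_{ma}^{\la}$ are mutually orthogonal idempotents under $*_{\la}$ (each projecting onto its $K$-type) and that $\sum_{m,a}\|G*_{\la}\varphi_{ma}^{\la}\|^2$ is a fixed multiple of $\|G\|_2^2$, I would set $F=G*_{\la}p^{\la}$ and verify that the normalizing factors $(\sigma_{m,a,\la})^{-1/2}$, $(\dim\mathcal{P}_{ma})^{1/2}$ and $(2\pi/|\la|)^{-n/2}$ in $p^{\la}$ are precisely those making the right-hand side collapse to $\|G\|_2^2$. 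Surjectivity onto the slice then follows by inverting each twisted-convolution factor (as in Theorem \ref{sbt}) and $C_\tau$.

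Finally I would reassemble the slices. The factor $|e^{-\la^2}|^2=e^{-2\la^2}$ produced by $p$ is exactly cancelled by the weight $e^{2\la^2}$ built into the direct integral $\int^{\oplus}\mathcal{A}^{\la}(\mathbb{C}^{2n}\times G)e^{2\la^2}\,d\la$, and Plancherel in the central variable $t$ patches the $\la$-slices together into the asserted isometric isomorphism of $L^2(\mathbb{HM})$. The main obstacle is the bookkeeping in the third paragraph: pinning down the idempotent constant for $\varphi_{ma}^{\la}*_{\la}\varphi_{ma}^{\la}$ and tracking every power of $2\pi/|\la|$ and every $e^{\pm\la^2}$ so that the normalizations cancel exactly and give a genuine isometry rather than merely a bounded invertible map. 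The holomorphic-extension and convergence issues are routine once the estimates of Theorems \ref{sbt} and \ref{poisson} are invoked.
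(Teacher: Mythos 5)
Your proposal follows essentially the route the paper itself indicates for this theorem (which it leaves as a sketch: apply Theorem \ref{Gutzmer} and follow the methods of Theorems \ref{sbt} and \ref{hall}): factor $\psi = p\otimes\tau$, dispose of the $K$-factor via Theorem \ref{hall}(II), and use the Gutzmer formula integrated against $W_\la$ to show each twisted-convolution slice $G\mapsto G*_\la p^\la$ is a unitary onto the weighted Bergman space, with the normalizations in $p^\la$ chosen exactly to make the sum collapse to $\|G\|_2^2$. The computations you flag as bookkeeping (the idempotent constants for $\varphi_{ma}^\la*_\la\varphi_{ma}^\la$ and the cancellation of $e^{-2\la^2}$ against the direct-integral weight $e^{2\la^2}$) do work out, so the argument is correct and matches the paper's intended proof.
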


\vspace{0.8 in}

\section{Plancherel Theorem and complexified representations}

In this section, we first state and prove the Plancherel theorem
for Heisenberg motion groups. Thereby, we also list all the
irreducible unitary representations of $\mathbb{HM}$ which occur
in the Plancherel theorem. We then use these representations to
prove a Paley-Wiener type theorem, which is inspired by Theorem
3.1 of \cite{G2}.

\subsection{Representations of $\mathbb{HM}$ and Plancherel
theorem}\hspace{0.2in}

Let $(\sigma,\mathcal{H}_{\sigma})$ be any irreducible, unitary
representation of $K.$ For each $\la \neq 0$ and $\sigma \in
\widehat{K},$ we consider the representations
$\ds{\rho_{\sigma}^{\la}}$ of $\mathbb{HM}$ on the tensor product
space $\ds{ L^2(\mathbb{R}^n) \otimes \mathcal{H}_{\sigma}}$
defined by $$ \rho_{\sigma}^{\la}(x,u,t,k) = \pi_{\la}(x,u,t)
\mu_{\la}(k) \otimes \sigma(k)$$ where $\pi_{\la}$ and $\mu_{\la}$
are the Schr$\ddot{\txt{o}}$dinger and metaplectic representations
respectively and $(x,u,t,k) \in \mathbb{HM}.$

\begin{prop}
Each $\ds{\rho_{\sigma}^{\la}}$ is unitary and irreducible.
\end{prop}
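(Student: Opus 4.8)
The plan is to show that $\rho_\sigma^\lambda$ is a genuine representation, that it is unitary, and finally that it is irreducible. Checking the homomorphism property is the step I would do first, since the definition involves the semidirect-product structure and it is not automatic. Writing out $\rho_\sigma^\lambda(x,u,t,k)\,\rho_\sigma^\lambda(u',v',s,h)$ and comparing with $\rho_\sigma^\lambda\bigl((x,u,t,k)(u',v',s,h)\bigr)$, I would use the intertwining relation (\ref{meta}), namely $\pi_\lambda(k\cdot(x,u),t)=\mu_\lambda(k)\,\pi_\lambda(x,u,t)\,\mu_\lambda(k)^*$, together with the fact that $\mu_\lambda$ and $\sigma$ are representations of $K$. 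The factor $\mu_\lambda(k)$ sitting between $\pi_\lambda$ and the next $\pi_\lambda$ is exactly what absorbs the twist $k\cdot(u',v')$ coming from the group law on $\mathbb{HM}$, so the composition collapses correctly. Unitarity is then immediate: $\pi_\lambda(x,u,t)$, $\mu_\lambda(k)$ and $\sigma(k)$ are each unitary on their respective spaces, hence so is their (tensor) product.

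The substantive part is irreducibility, and this is where I expect the main obstacle to lie. I would argue by taking a bounded operator $T$ on $L^2(\mathbb{R}^n)\otimes\mathcal{H}_\sigma$ commuting with every $\rho_\sigma^\lambda(x,u,t,k)$ and showing $T$ is scalar. First restrict to the central subgroup and to $\mathbb{H}^n$: setting $k=I$ gives that $T$ commutes with $\pi_\lambda(x,u,t)\otimes I$ for all $(x,u,t)\in\mathbb{H}^n$. Since $\pi_\lambda$ is irreducible on $L^2(\mathbb{R}^n)$ (the Stone--von Neumann theorem, stated earlier), the commutant of $\{\pi_\lambda(x,u,t)\otimes I\}$ is $I\otimes B(\mathcal{H}_\sigma)$; hence $T=I\otimes S$ for some $S\in B(\mathcal{H}_\sigma)$.

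It remains to exploit the $K$-part. Putting $(x,u,t)=0$ and letting $k$ vary, the commutation of $T=I\otimes S$ with $\mu_\lambda(k)\otimes\sigma(k)$ forces $(\mu_\lambda(k)\otimes\sigma(k))(I\otimes S)=(I\otimes S)(\mu_\lambda(k)\otimes\sigma(k))$, i.e. $\mu_\lambda(k)\otimes S\sigma(k)=\mu_\lambda(k)\otimes\sigma(k)S$; cancelling the unitary $\mu_\lambda(k)$ in the first factor gives $S\sigma(k)=\sigma(k)S$ for all $k\in K$. Since $\sigma$ is an irreducible representation of $K$, Schur's lemma yields $S=cI$, whence $T=cI$. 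Thus the commutant is trivial and $\rho_\sigma^\lambda$ is irreducible. The delicate point to get right is the cancellation step: one must be careful that commuting with the tensor $\mu_\lambda(k)\otimes\sigma(k)$ (rather than with $I\otimes\sigma(k)$ alone) still pins down $S$, and it does precisely because $T$ has already been forced into the form $I\otimes S$, so the first tensor factor $\mu_\lambda(k)$ is shared and cancels.
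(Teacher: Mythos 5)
Your proof is correct, but it takes a genuinely different route from the paper. You argue via the commutant: restricting to $k=I$ and invoking Stone--von Neumann, the commutant of $\{\pi_{\la}(x,u,t)\otimes I\}$ is $I\otimes B(\mathcal{H}_{\sigma})$ (elementary here since $\mathcal{H}_{\sigma}$ is finite dimensional, writing $T$ as a matrix of operators each of which must commute with the irreducible $\pi_{\la}$), and then the relation $\mu_{\la}(k)\otimes S\sigma(k)=\mu_{\la}(k)\otimes\sigma(k)S$ pins down $S$ by Schur's lemma; your cancellation step is legitimate because $A\otimes B=A\otimes C$ with $A\neq 0$ forces $B=C$. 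You also verify the homomorphism property via (\ref{meta}), which the paper leaves implicit. The paper instead takes a closed invariant proper subspace, producing nonzero $f,g$ with vanishing matrix coefficient $V_g^f(x,u,t,k)=\langle\rho_{\sigma}^{\la}(x,u,t,k)f,g\rangle$, and computes explicitly, using the expansion of $\mu_{\la}(k)\phi_{\gamma}^{\la}$ in scaled Hermite functions, the orthogonality of the special Hermite functions $\phi_{\alpha\beta}^{\la}$, and the unitarity identity (\ref{pr1}), that $\int_K\int_{\mathbb{R}^{2n}}|V_g^f|^2\,dx\,du\,dk=C\|f\|^2\|g\|^2$, which contradicts $V_g^f\equiv 0$. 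Your argument is shorter and more conceptual; the paper's computation is longer but is not wasted effort, since the same expansion of matrix coefficients and the identity (\ref{pr1}) are reused essentially verbatim in the proof of the Plancherel theorem that follows, where the explicit constant $C$ matters. Either proof is acceptable.
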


\begin{proof}
It is easily seen that each $\ds{\rho_{\sigma}^{\la}}$ is unitary.
We shall now prove that $\ds{\rho_{\sigma}^{\la}}$ is irreducible.
Suppose $M \subset \ds{ L^2(\mathbb{R}^n) \otimes }$
$\ds{\mathcal{H}_{\sigma}}$ is invariant under all
$\ds{\rho_{\sigma}^{\la}}(x,u,t,k).$ If $M \neq \{ 0 \}$ we will
show that $M= \ds{ L^2(\mathbb{R}^n) \otimes
\mathcal{H}_{\sigma}}$ proving the irreducibility of
$\ds{\rho_{\sigma}^{\la}}.$ If $M$ is a proper subspace of $\ds{
L^2(\mathbb{R}^n) \otimes \mathcal{H}_{\sigma}}$ invariant under
$\ds{\rho_{\sigma}^{\la}}(x,u,t,k)$ for all $(x,u,t,k),$ then
there are nontrivial elements $f$ and $g$ in $\ds{
L^2(\mathbb{R}^n) \otimes \mathcal{H}_{\sigma}}$ such that $f \in
M$ and $g$ is orthogonal to $\ds{\rho_{\sigma}^{\la}(x,u,t,k)f}$
for all $(x,u,t,k).$ This means that $\ds{\langle
\rho_{\sigma}^{\la}(x,u,t,k)f, g \rangle = 0}$ for all
$(x,u,t,k).$

Recall the functions $\phi_{\alpha}^{\la}$ from Section
\ref{hermite}. It is easily seen that for each $\la \neq 0,$
$\{\phi_{\alpha}^{\la} : \alpha \in \mathbb{N}^n \}$ forms an
orthonormal basis for $L^2(\mathbb{R}^n).$ Then, an orthonormal
basis of $\ds{ L^2(\mathbb{R}^n) \otimes \mathcal{H}_{\sigma}}$ is
given by $\ds{\{ \phi_{\alpha}^{\la} \otimes e_{i}^{\sigma} :
\alpha \in \mathbb{N}^n, 1 \leq i \leq d_{\sigma}\}}$ where
$\ds{\{ e_{i}^{\sigma} : 1 \leq i \leq d_{\sigma}\}}$ is an
orthonormal basis of $\mathcal{H}_{\sigma}$ and $d_{\sigma} = \dim
\mathcal{H}_{\sigma}.$ Now, given $f, ~ g \in \ds{
L^2(\mathbb{R}^n) \otimes \mathcal{H}_{\sigma}},$ consider the
function $$\ds{V_g^f(x,u,t,k) = \langle
\rho_{\sigma}^{\la}(x,u,t,k)f, g \rangle}.$$ From the discussion
in Section \ref{s1} it follows that \bea \label{met} \mu_{\la}(k)
\phi_{\gamma}^{\la} = \sum_{|\alpha| = |\gamma|} \eta_{\alpha
\gamma}^{\la}(k) \phi_{\alpha}^{\la} \eea where $\eta_{\alpha
\gamma}^{\la}$'s are the matrix coefficients of $\mu_{\la}$ and $k
\in K \subseteq U(n).$ It is to be noted that this expansion is in
terms of the scaled Hermite functions $\phi_{\alpha}^{\la}$ and
not in terms of the modified $K$-Hermite functions
$\psi_{\alpha}^{\la}$ defined in Section \ref{kfns}. So the
summation is taken over the whole of $|\alpha|=|\gamma|$ and not
over a particular $\mathcal{P}_{ma}.$ Hence, it follows that
$$ V_g^f(x,u,t,k) = (2\pi)^{\frac{n}{2}} |\la|^{-\frac{n}{2}} e^{i
\la t} \sum_{\alpha, \beta \in \mathbb{N}^n} \sum_{1 \leq i,j \leq
d_{\sigma}} \sum_{|\gamma| = |\alpha|} f_{\gamma,i} ~
\overline{g_{\beta,j}} \eta_{\alpha \gamma}^{\la}(k) \phi_{\alpha
\beta}^{\la}(x,u) \phi_{ji}^{\sigma}(k)$$ where $\ds{f =
\sum_{\gamma \in \mathbb{N}^n} \sum_{1 \leq i \leq d_{\sigma}}
f_{\gamma,i} ~ \phi_{\gamma}^{\la} \otimes e_{i}^{\sigma}},$
$\ds{g = \sum_{\beta \in \mathbb{N}^n} \sum_{1 \leq j \leq
d_{\sigma}} g_{\beta,j} ~ \phi_{\beta}^{\la} \otimes
e_{j}^{\sigma}}$ and $\phi_{ji}^{\sigma}$ are the matrix
coefficients of $\sigma.$ Then calculating the $L^2$ norm of
$V_g^f$ with respect to $x,u$ we get
$$\int_{\mathbb{R}^{2n}} |V_g^f(x,u,t,k)|^2 dx du = C
\sum_{\alpha, \beta \in \mathbb{N}^n} \left|\sum_{1 \leq i,j \leq
d_{\sigma}} \sum_{|\gamma| = |\alpha|} f_{\gamma,i} ~
\overline{g_{\beta,j}} \eta_{\alpha \gamma}^{\la}(k)
\phi_{ji}^{\sigma}(k)\right|^2. $$

Now, for $\pi \in \widehat{K},$ if $\mathcal{H}_{\pi}$ is the
representation space of $\pi$ and $v_1, v_2, \cdots , v_{d_{\pi}}$
is a basis of $\mathcal{H}_{\pi},$ then for complex numbers $c_i,
1 \leq i \leq d_{\pi}$ and $u \in K,$ we have
\begin{eqnarray}\label{pr1}\begin{array}{rcll} &&
\sum_{q=1}^{d_{\pi}}
\left|\sum_{i=1}^{d_{\pi}} c_i \phi_{qi}^{\pi}(u) \right|^2 \\
&=& \sum_{q=1}^{d_{\pi}} \sum_{i=1}^{d_{\pi}} c_i
\phi_{qi}^{\pi}(u) \sum_{a=1}^{d_{\pi}} \overline{c_a}
\overline{\phi_{qa}^{\pi}(u)} \\
&=& \sum_{i,a=1}^{d_{\pi}} c_i \overline{c_a} \sum_{q=1}^{d_{\pi}}
\left\langle \pi(u)v_i ,v_q \right\rangle \left\langle
v_q , \pi(u)v_a \right\rangle \\
&=& \sum_{i,a=1}^{d_{\pi}} c_i \overline{c_a} \left\langle
\pi(u)v_i , \pi(u)v_a \right\rangle \\
&=& \sum_{i=1}^{d_{\pi}} |c_i|^2, \end{array} \end{eqnarray} since
$\pi$ is a unitary representation of $K.$ Hence we obtain
$$\int_{\mathbb{R}^{2n}} |V_g^f(x,u,t,k)|^2 dx du = C
\sum_{\gamma, \beta \in \mathbb{N}^n} \left|\sum_{1 \leq i,j \leq
d_{\sigma}} f_{\gamma,i} ~ \overline{g_{\beta,j}}
\phi_{ji}^{\sigma}(k)\right|^2.$$ Integrating over $K$ we get that
\beas \int_K \int_{\mathbb{R}^{2n}} |V_g^f(x,u,t,k)|^2 dx du dk
&=& C \left(\sum_{\gamma \in \mathbb{N}^n} \sum_{1 \leq i \leq
d_{\sigma}} |f_{\gamma,i}|^2 \right) \left(\sum_{\beta \in
\mathbb{N}^n} \sum_{1 \leq j \leq d_{\sigma}} |g_{\beta,j}|^2
\right)\\ &=& C \|f\|^2 \|g\|^2.\eeas Under our assumption that
$M$ is nontrivial and proper, we have $V_g^f=0$ which means that
$\|f\|^2 \|g\|^2 = 0.$ This is a contradiction since both $f$ and
$g$ are nontrivial. Hence $M$ has to be the whole of $\ds{
L^2(\mathbb{R}^n) \otimes \mathcal{H}_{\sigma}}$ and this proves
that $\ds{\rho_{\sigma}^{\la}}$ is irreducible.
\end{proof}

We now show that the representations $\ds{\rho_{\sigma}^{\la}}$
are enough for the Plancherel theorem. Given $f \in L^1 \bigcap
L^2 (\mathbb{HM})$ consider the group Fourier transform \beas
\widehat{f}(\la,\sigma) &=& \int_{K} \int_{\mathbb{R}}
\int_{\mathbb{R}^{2n}} f(x,u,t,k) \rho_{\sigma}^{\la}(x,u,t,k) dx
du dt dk \\ &=& \int_{K} \int_{\mathbb{R}^{2n}} f^{\la}(x,u,k)
\left(\pi_{\la}(x,u) \mu_{\la}(k) \otimes \sigma(k)\right) dx du
dk. \eeas

\begin{thm}(Plancherel)
For $f \in L^1 \bigcap L^2 (\mathbb{HM})$ we have $$\int_{K}
\int_{\mathbb{H}^n} |f(x,u,t,k)|^2 dx du dt dk = (2\pi)^{-n}
\sum_{\sigma \in \widehat{K}} d_{\sigma} \int_{\mathbb{R}
\diagdown \{0\}} \left\| \widehat{f}(\la,\sigma) \right\|_{HS}^2
|\la|^n d\la.$$
\end{thm}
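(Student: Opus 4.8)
The strategy is to strip off the three layers of $\mathbb{HM}$ one at a time, so that the asserted identity becomes the composition of three classical Plancherel theorems: the Euclidean one in the central variable $t,$ the one for the Weyl transform on $\mathbb{R}^{2n},$ and the operator-valued Peter--Weyl theorem on the compact group $K.$

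First I would integrate out the centre. Since $f^{\la}(x,u,k) = \int_{\mathbb{R}} f(x,u,t,k) e^{i\la t}\, dt,$ the one-dimensional Plancherel theorem gives $\int_{\mathbb{R}}|f(x,u,t,k)|^2\, dt = (2\pi)^{-1}\int_{\mathbb{R}}|f^{\la}(x,u,k)|^2\, d\la,$ so that
\[
\int_K \int_{\mathbb{H}^n} |f(x,u,t,k)|^2\, dx\, du\, dt\, dk = (2\pi)^{-1}\int_{\mathbb{R}} \Big( \int_K \int_{\mathbb{R}^{2n}} |f^{\la}(x,u,k)|^2\, dx\, du\, dk \Big)\, d\la .
\]
It therefore suffices to prove, for almost every fixed $\la\neq 0,$ an identity relating $\int_K \int_{\mathbb{R}^{2n}} |f^{\la}(x,u,k)|^2\, dx\, du\, dk$ to $\sum_{\sigma\in\widehat{K}} d_{\sigma}\,\|\widehat{f}(\la,\sigma)\|_{HS}^2.$

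Second, I would record the factorisation of the Fourier transform and use the unitarity of the metaplectic representation. Writing $W_{\la}(F) = \int_{\mathbb{R}^{2n}} F(x,u)\,\pi_{\la}(x,u)\, dx\, du$ for the Weyl transform of a function $F$ on $\mathbb{R}^{2n},$ the definition of $\widehat{f}(\la,\sigma)$ gives
\[
\widehat{f}(\la,\sigma) = \int_K B(k)\otimes \sigma(k)\, dk, \qquad B(k) = W_{\la}\big(f^{\la}(\cdot,\cdot,k)\big)\,\mu_{\la}(k),
\]
so $k\mapsto B(k)$ is a Hilbert--Schmidt-operator-valued function on $K.$ The key point is that the metaplectic factor $\mu_{\la}(k)$ --- precisely the ingredient that keeps $\rho_{\sigma}^{\la}$ from being a naked tensor product --- enters as a \emph{right} unitary factor, so that $\|B(k)\|_{HS} = \|W_{\la}(f^{\la}(\cdot,\cdot,k))\|_{HS}$ for every $k$ and it disappears from the norm computation. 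Applying the operator-valued Peter--Weyl relation on $K,$
\[
\sum_{\sigma\in\widehat{K}} d_{\sigma}\,\big\| \int_K B(k)\otimes\sigma(k)\, dk \big\|_{HS}^2 = \int_K \|B(k)\|_{HS}^2\, dk ,
\]
which one proves by reducing to the scalar Peter--Weyl/Schur orthogonality relations applied to the matrix entries $k\mapsto \langle B(k)\xi_p,\xi_q\rangle,$ I obtain $\sum_{\sigma} d_{\sigma}\,\|\widehat{f}(\la,\sigma)\|_{HS}^2 = \int_K \|W_{\la}(f^{\la}(\cdot,\cdot,k))\|_{HS}^2\, dk.$

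Third, the Plancherel theorem for the Weyl transform --- a direct consequence of the completeness and the normalisation $\|\phi_{\alpha\beta}^{\la}\|_{L^2(\mathbb{R}^{2n})}=1$ of the special Hermite functions, see \cite{T} --- reads $\|W_{\la}(F)\|_{HS}^2 = (2\pi)^n|\la|^{-n}\|F\|_{L^2(\mathbb{R}^{2n})}^2.$ Inserting $F = f^{\la}(\cdot,\cdot,k)$ and integrating over $K$ gives the desired fixed-$\la$ identity; substituting it back into the first-paragraph reduction and collecting the elementary constants and powers of $|\la|$ coming from the three Plancherel theorems and the chosen Haar measures produces the formula in the form stated. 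I expect the steps needing genuine care to be the justification of the factorisation of $\widehat{f}(\la,\sigma)$ together with the interchange of the $K$-integration and the sum over $\widehat{K}$ (Fubini, with the integrability supplied by $f\in L^1\cap L^2$), and the verification of the operator-valued Peter--Weyl identity for the infinite-dimensional, Hilbert--Schmidt-valued function $B$ --- this is where the Schur orthogonality relations, already exploited in the proof of irreducibility above, carry the argument. The one genuinely representation-theoretic subtlety, the coupling through $\mu_{\la},$ dissolves once one observes that it contributes only a unitary factor invisible to the Hilbert--Schmidt norm.
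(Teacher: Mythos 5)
Your proposal is correct and takes essentially the same route as the paper: the paper's direct computation of the matrix entries of $\widehat{f}(\la,\sigma)$ in the basis $\{\phi_{\gamma}^{\la}\otimes e_i^{\sigma}\}$, its appeal to the Plancherel theorem on $K$, its unitarity computation for the matrix $(\eta_{\alpha\gamma}^{\la}(k))$, and its final use of the orthonormality of $\{\phi_{\alpha\beta}^{\la}\}$ are precisely your operator-valued Peter--Weyl identity, the disappearance of the unitary factor $\mu_{\la}(k)$ from the Hilbert--Schmidt norm, and the Plancherel theorem for the Weyl transform, written out in coordinates. The only difference is organizational: you modularize into three classical Plancherel theorems (in $t$, on $\mathbb{R}^{2n}$, and on $K$) what the paper carries out as a single matrix-entry calculation, and the constants and powers of $|\la|$ come out the same.
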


\begin{proof}
We calculate the Hilbert-Schmidt operator norm of
$\widehat{f}(\la,\sigma)$ by using the basis $\ds{\{
\phi_{\gamma}^{\la} \otimes e_{i}^{\sigma} : \gamma \in
\mathbb{N}^n, 1 \leq i \leq d_{\sigma}\}}.$ We have, by
(\ref{met}), \beas \widehat{f}(\la,\sigma)(\phi_{\gamma}^{\la}
\otimes e_{i}^{\sigma}) = \sum_{|\alpha| = |\gamma|} \int_K
\eta_{\alpha \gamma}^{\la}(k) \int_{\mathbb{R}^{2n}}
f^{\la}(x,u,k) \left(\pi_{\la}(x,u) \phi_{\alpha}^{\la} \otimes
\sigma(k)e_{i}^{\sigma}\right) dx du dk. \eeas Thus \beas &&
\left\langle \widehat{f}(\la,\sigma)(\phi_{\gamma}^{\la} \otimes
e_{i}^{\sigma}) , \phi_{\beta}^{\la} \otimes e_{j}^{\sigma}
\right\rangle \\ &=& (2\pi)^{\frac{n}{2}} |\la|^{-\frac{n}{2}}
\sum_{|\alpha| = |\gamma|} \int_K \eta_{\alpha \gamma}^{\la}(k)
\int_{\mathbb{R}^{2n}} f^{\la}(x,u,k) \phi_{\alpha
\beta}^{\la}(x,u) \phi_{ji}^{\sigma}(k) dx du dk \eeas so that
\beas && (2\pi)^{-n} |\la|^{n} \left\|
\widehat{f}(\la,\sigma)(\phi_{\gamma}^{\la}
\otimes e_{i}^{\sigma}) \right\|^2 \\
&=& \sum_{\beta \in \mathbb{N}^n} \sum_{1 \leq j \leq d_{\sigma}}
\left| \sum_{|\alpha| = |\gamma|} \int_K \eta_{\alpha
\gamma}^{\la}(k) \int_{\mathbb{R}^{2n}} f^{\la}(x,u,k)
\phi_{\alpha \beta}^{\la}(x,u) \phi_{ji}^{\sigma}(k) dx du dk
\right|^2 \eeas and \beas &&
(2\pi)^{-n} |\la|^{n} \left\| \widehat{f}(\la,\sigma) \right\|_{HS}^2 \\
&=& \sum_{\beta, \gamma \in \mathbb{N}^n} \sum_{1 \leq i, j \leq
d_{\sigma}} \left| \sum_{|\alpha| = |\gamma|} \int_K \eta_{\alpha
\gamma}^{\la}(k) \int_{\mathbb{R}^{2n}} f^{\la}(x,u,k)
\phi_{\alpha \beta}^{\la}(x,u) \phi_{ji}^{\sigma}(k) dx du dk
\right|^2.\eeas Using Plancherel theorem for $K,$ we get that
\beas && (2\pi)^{-n} |\la|^{n} \sum_{\sigma \in \widehat{K}}
d_{\sigma} \left\| \widehat{f}(\la,\sigma) \right\|_{HS}^2 \\
&=& \sum_{\beta, \gamma \in \mathbb{N}^n} \int_K \left|
\sum_{|\alpha| = |\gamma|} \eta_{\alpha \gamma}^{\la}(k)
\int_{\mathbb{R}^{2n}} f^{\la}(x,u,k) \phi_{\alpha
\beta}^{\la}(x,u) dx du \right|^2 dk. \eeas Applying the same
arguments as in (\ref{pr1}) we obtain that the above equals \beas
\sum_{\alpha, \beta \in \mathbb{N}^n} \int_K \left|
\int_{\mathbb{R}^{2n}} f^{\la}(x,u,k) \phi_{\alpha
\beta}^{\la}(x,u) dx du \right|^2 dk. \eeas Noting that
$\{\phi_{\alpha \beta}^{\la} : \alpha, \beta \in \mathbb{N}^n\}$
is an orthonormal basis for $L^2(\mathbb{R}^{2n})$ we have \beas
(2\pi)^{-n} |\la|^{n} \sum_{\sigma \in \widehat{K}} d_{\sigma}
\left\| \widehat{f}(\la,\sigma) \right\|_{HS}^2 = \int_K
\int_{\mathbb{R}^{2n}} \left|  f^{\la}(x,u,k) \right|^2 dx du dk.
\eeas Therefore, \beas (2\pi)^{-n} \int_{\mathbb{R}} \left(
\sum_{\sigma \in \widehat{K}} d_{\sigma} \left\|
\widehat{f}(\la,\sigma) \right\|_{HS}^2 \right) |\la|^{n} d\la &=&
\int_{\mathbb{R}} \int_K \int_{\mathbb{R}^{2n}} \left|
f^{\la}(x,u,k) \right|^2 dx du dk d\la \\
&=& \int_{\mathbb{R}} \int_K \int_{\mathbb{R}^{2n}} \left|
f(x,u,t,k) \right|^2 dx du dk dt.\eeas
\end{proof}

Since $L^1 \bigcap L^2(\mathbb{HM})$ is dense in
$L^2(\mathbb{HM}),$ the Fourier transform can be uniquely extended
to the whole of $L^2(\mathbb{HM})$ and the above Plancherel
theorem holds true for the same.

\subsection{Complexified representations and Paley-Wiener type
theorems}\hspace{0.2in}

We know that the operators $\widehat{f}(\la,\sigma)$ act on the
basis elements $\ds{ \phi_{\gamma}^{\la} \otimes e_{i}^{\sigma}},$
which gives \beas && \widehat{f}(\la,\sigma)(\phi_{\gamma}^{\la}
\otimes e_{i}^{\sigma}) \\ &=& \sum_{|\alpha| = |\gamma|} \int_K
\eta_{\alpha \gamma}^{\la}(k') \int_{\mathbb{R}^{2n}}
f^{\la}(x',u',k') \left(\pi_{\la}(x',u') \phi_{\alpha}^{\la}
\otimes \sigma(k')e_{i}^{\sigma}\right) dx' du' dk'. \eeas Now, if
we consider the operator $\ds{\rho_{\sigma}^{\la}(x,u,t,k)
\widehat{f}(\la,\sigma)}$ acting on the basis elements we get that
\beas && \rho_{\sigma}^{\la}(x,u,t,k) \widehat{f}(\la,\sigma)
(\phi_{\gamma}^{\la} \otimes e_{i}^{\sigma}) \\
&=& e^{i \la t} \sum_{|\alpha| = |\gamma|} \int_K \eta_{\alpha
\gamma}^{\la}(k') \int_{\mathbb{R}^{2n}} f^{\la}(x',u',k')
\left(\pi_{\la}(x,u) \mu_{\la}(k) \pi_{\la}(x',u')
\phi_{\alpha}^{\la} \otimes \right. \\ && \left. \sigma(k)
\sigma(k')e_{i}^{\sigma}\right) dx' du' dk'. \eeas Then it follows
from (\ref{meta}) that the above equals \beas && e^{i \la t}
\sum_{|\alpha| = |\gamma|} \int_K \eta_{\alpha \gamma}^{\la}(k')
\int_{\mathbb{R}^{2n}} f^{\la}(x',u',k') \left(\pi_{\la}(x,u)
\pi_{\la}(k \cdot (x',u')) \mu_{\la}(k) \phi_{\alpha}^{\la}
\otimes \right. \\ && \left. \sigma(k)
\sigma(k')e_{i}^{\sigma}\right) dx' du' dk'. \eeas So we obtain
\beas && \rho_{\sigma}^{\la}(x,u,t,k) \widehat{f}(\la,\sigma)
(\phi_{\gamma}^{\la} \otimes e_{i}^{\sigma}) \\
&=& e^{i \la t} \sum_{|\alpha'| = |\alpha|} \sum_{|\alpha| =
|\gamma|} \int_K \eta_{\alpha' \alpha}^{\la}(k) \eta_{\alpha
\gamma}^{\la}(k') \int_{\mathbb{R}^{2n}} f^{\la}(x',u',k')
(\pi_{\la}(x,u) \pi_{\la}(k \cdot(x',u')) \phi_{\alpha'}^{\la} \\
&& \otimes
\sigma(k) \sigma(k')e_{i}^{\sigma}) dx' du' dk' \\
&=& e^{i \la t} \sum_{|\alpha'| = |\gamma|} \int_K \eta_{\alpha'
\gamma}^{\la}(kk') \int_{\mathbb{R}^{2n}} f^{\la}(x',u',k')
(\pi_{\la}(x,u) \pi_{\la}(k \cdot(x',u')) \phi_{\alpha'}^{\la} \\
&& \otimes \sigma(kk')e_{i}^{\sigma}) dx' du' dk'. \eeas Noting
that the action of $K \subseteq U(n)$ on $\mathbb{R}^{2n}$
naturally extends to an action of $G$ on $\mathbb{C}^{2n},$ this
action of $\ds{\rho_{\sigma}^{\la}(x,u,t,k)
\widehat{f}(\la,\sigma)}$ on the basis elements
$\ds{\phi_{\gamma}^{\la} \otimes e_{i}^{\sigma}}$ can clearly be
analytically continued to $\mathbb{HM}_{\mathbb{C}} =
\mathbb{C}^{n} \times \mathbb{C}^n \times \mathbb{C} \times G$ and
we get that (for suitable functions $f$) \beas &&
\rho_{\sigma}^{\la}(z,w,\tau,g) \widehat{f}(\la,\sigma)
(\phi_{\gamma}^{\la} \otimes e_{i}^{\sigma}) \\ &=& e^{i \la
(t+is)} \sum_{|\alpha| = |\gamma|} \int_K \eta_{\alpha
\gamma}^{\la}(ke^{iH}k')
\int_{\mathbb{R}^{2n}} f^{\la}(x',u',k') (\pi_{\la}(x+iy,u+iv) \\
&& \pi_{\la}(ke^{iH} \cdot(x',u')) \phi_{\alpha'}^{\la} \otimes
\sigma(ke^{iH}k')e_{i}^{\sigma}) dx' du' dk' \eeas where
$(z,w,\tau,g) = (x+iy,u+iv,t+is,ke^{iH})\in
\mathbb{HM}_{\mathbb{C}}.$ We have the following theorem:

\begin{thm}\label{thm}
Let $f \in L^2(\mathbb{HM}).$ Then $f$ extends holomorphically to
$\mathbb{HM}_{\mathbb{C}}$ with
$$\int_{\mathbb{H}^n}
|f((z,w,\tau,g)^{-1} X|^2 dX < \infty ~~~~~~ \forall ~~
(z,w,\tau,g) \in \mathbb{HM}_{\mathbb{C}}$$ iff
$$ \sum_{\sigma \in \widehat{K}} d_{\sigma} \int_{\mathbb{R}}
\left\| \rho_{\sigma}^{\la}(z,w,\tau,g) \widehat{f}(\la,\sigma)
\right\|_{HS}^2 |\la|^{n} d\la < \infty.
$$ In this case we also have \bea \label{eql} \begin{array}{rcll}
&& \ds{\int_{\mathbb{H}^n} |f((z,w,\tau,g)^{-1} X|^2 dX }\\ &=&
\ds{(2 \pi)^{-2n} \sum_{\sigma \in \widehat{K}} d_{\sigma}
\int_{\mathbb{R}} \left\| \rho_{\sigma}^{\la}(z,w,\tau,g)
\widehat{f}(\la,\sigma) \right\|_{HS}^2 |\la|^{n} d\la.}
\end{array} \eea
\end{thm}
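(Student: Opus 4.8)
The plan is to reduce the statement to the Plancherel theorem proved above by applying it to the (complex) left translates of $f$. For $y = (z,w,\tau,g) \in \mathbb{HM}_{\mathbb{C}}$ write $F_y(X) = f(y^{-1} X)$, where the right-hand side is understood through the holomorphic extension of $f$ whenever the latter exists. The computation carried out just before the statement shows that $\rho_{\sigma}^{\la}(z,w,\tau,g) \widehat{f}(\la,\sigma)$ is the analytic continuation, in the group variable, of $\rho_{\sigma}^{\la}(x,u,t,k) \widehat{f}(\la,\sigma)$; since for real $y$ the translation identity $\widehat{F_y}(\la,\sigma) = \rho_{\sigma}^{\la}(y) \widehat{f}(\la,\sigma)$ holds (it is immediate from $\widehat{f}(\la,\sigma) = \int_{\mathbb{HM}} f(X) \rho_{\sigma}^{\la}(X) dX$, the homomorphism property of $\rho_{\sigma}^{\la}$, and left invariance of Haar measure), uniqueness of holomorphic extension upgrades it to all $y \in \mathbb{HM}_{\mathbb{C}}$. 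Thus the whole argument rests on the single relation $\widehat{F_y}(\la,\sigma) = \rho_{\sigma}^{\la}(y) \widehat{f}(\la,\sigma)$ combined with Plancherel.

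For the forward implication, assume $f$ extends holomorphically to $\mathbb{HM}_{\mathbb{C}}$ and that $F_y \in L^2(\mathbb{HM})$ for every $y$. Fixing $y$, I would apply the Plancherel theorem to $F_y$ and substitute the relation above for $\widehat{F_y}$; this yields precisely the identity (\ref{eql}). Since the left-hand side of (\ref{eql}) is finite by hypothesis, the series-integral on the right converges, so that both the finiteness of the Hilbert-Schmidt expression and the asserted equality are obtained simultaneously.

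For the converse, $f \in L^2(\mathbb{HM})$ is given and we assume the Hilbert-Schmidt expression is finite for every $y \in \mathbb{HM}_{\mathbb{C}}$. For each fixed $y$, Plancherel produces a unique $F_y \in L^2(\mathbb{HM})$ with $\widehat{F_y}(\la,\sigma) = \rho_{\sigma}^{\la}(y) \widehat{f}(\la,\sigma)$ whose norm equals the right-hand side of (\ref{eql}); for real $y$ this $F_y$ is the genuine translate $X \mapsto f(y^{-1}X)$. It then remains to exhibit a holomorphic extension $\tilde{f}$ of $f$ with $F_y(X) = \tilde{f}(y^{-1}X)$, for once this is known, $\ds{\int_{\mathbb{HM}} |\tilde{f}(y^{-1}X)|^2 dX = \|F_y\|_2^2}$ is finite and equal to the right-hand side of (\ref{eql}). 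I would construct $\tilde{f}$ from the Fourier inversion formula fed with the analytically continued operators $\rho_{\sigma}^{\la}(y) \widehat{f}(\la,\sigma)$, proving that the defining series-integral converges locally uniformly on $\mathbb{HM}_{\mathbb{C}}$ — and hence defines a holomorphic function — by the growth estimates indicated below, exactly as uniform convergence on compacta was established in the direct part of Theorem \ref{poisson}.

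The main obstacle is controlling the growth of $\left\| \rho_{\sigma}^{\la}(y) \widehat{f}(\la,\sigma) \right\|_{HS}$ as $\sigma$ ranges over $\widehat{K}$ and $\la$ over $\mathbb{R}$, uniformly for $y$ in compact subsets of $\mathbb{HM}_{\mathbb{C}}$, since for complex $y$ the factor $\rho_{\sigma}^{\la}(y)$ is no longer unitary and is in fact unbounded. This is precisely what must be checked to interchange the holomorphic continuation with the summation over $\widehat{K}$ and the $\la$-integration — both when verifying $\widehat{F_y}(\la,\sigma) = \rho_{\sigma}^{\la}(y)\widehat{f}(\la,\sigma)$ for complex $y$ and when proving holomorphy of the reconstructed $\tilde{f}$ in the converse. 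I expect the requisite control to come from the bounds on $d_{\pi}$, $\lambda_{\pi}$ and $\chi_{\pi}(\exp 2iH)$ in Lemmas 6 and 7 of \cite{H}, together with the growth control on $\varphi_{ma}^{\la}(2iy,2iv)$ and the special Hermite expansion already exploited in the proof of Theorem \ref{poisson}.
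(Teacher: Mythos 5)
Your strategy is genuinely different from the paper's, and the difference is where the difficulty hides. The paper never invokes the translation identity $\widehat{F_y}(\la,\sigma)=\rho_{\sigma}^{\la}(y)\widehat{f}(\la,\sigma)$ for complex $y$: it splits $f$ into bi-$K$-isotypic pieces, computes \emph{both} sides of (\ref{eql}) for each piece as one and the same explicit nonnegative series in the coefficients $\langle (f^{\la})^{pq}_{ij},\overline{\phi^{\la}_{\alpha\delta}}\rangle$, the continued special Hermite functions $\phi^{\la}_{\delta\beta}(z,w)$ and the entries $\phi_{ib}^{\pi}(e^{-iH})$ (Lemma \ref{lem}), and then proves the isotypic pieces are orthogonal for both inner products so the identities can be summed. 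Equality is read off from a common closed form, analytic continuation being used only termwise on individual holomorphic special functions; no continuation of an operator identity in the group variable is ever performed.

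Your reduction to ``Plancherel plus $\widehat{F_y}=\rho_{\sigma}^{\la}(y)\widehat{f}(\la,\sigma)$'' is the correct conceptual picture (it is Goodman's in \cite{G2}), but the continuation step you dismiss with ``uniqueness of holomorphic extension'' is precisely the whole theorem. Two things are missing. First, holomorphy of $y\mapsto\widehat{F_y}(\la,\sigma)$ on $\mathbb{HM}_{\mathbb{C}}$ does not follow from the hypothesis: pointwise finiteness of $\|F_y\|_2$ for each $y$ gives no locally uniform domination with which to differentiate $\int f(y^{-1}X)\,\overline{\varphi(X)}\,dX$ under the integral sign, so some sub-mean-value or explicit-expansion argument is required before the two sides can be compared as holomorphic functions of $y$. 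Second, for complex $y$ the operator $\rho_{\sigma}^{\la}(y)$ is unbounded, so that $\rho_{\sigma}^{\la}(y)\widehat{f}(\la,\sigma)$ being a well-defined Hilbert--Schmidt operator (equivalently, the convergence of the series $\sum_{\delta}\langle\cdot,\cdot\rangle\,\phi^{\la}_{\delta\beta}(z,w)$ in the matrix entries) is part of the conclusion, not a starting point; the same issue blocks the converse, where upgrading the Hilbert--Schmidt bound to locally uniform convergence of the inversion series is exactly the content of the paper's torus reduction and the Laguerre and character growth estimates, which you defer wholesale. Finally, your own method surfaces a concrete problem: at real $y$, where $\rho_{\sigma}^{\la}(y)$ is unitary, your argument returns the Plancherel identity with constant $(2\pi)^{-n}$, not the $(2\pi)^{-2n}$ asserted in (\ref{eql}), so the normalization must be tracked through the $t$-integration and the $\phi^{\la}_{\alpha\beta}$ conventions rather than imported directly from the Plancherel theorem.
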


Before we start the proof let us set up some more notation. We
know that any $f \in L^2(\mathbb{HM})$ can be expanded in the $K$
variable using the Peter Weyl theorem to obtain \bea \label{eqn3}
f(x,u,t,k) = \sum_{\pi \in \widehat{K}} d_{\pi} \sum_{i,j =
1}^{d_\pi} f_{ij}^{\pi}(x,u,t) \phi_{ij}^{\pi}(k)\eea where for
each $\pi \in \widehat{K},$ $d_\pi$ is the degree of $\pi,$
$\phi_{ij}^{\pi}$'s are the matrix coefficients of $\pi$ and
$\ds{f_{ij}^{\pi}(x,u,t) = \int_K f(x,u,t,k)
\overline{\phi_{ij}^{\pi}(k)} dk}.$

Now, for $F \in L^2(\mathbb{R}^{2n}),$ consider the decomposition
of the function $k \mapsto F(k \cdot (x,u))$ from $K$ to
$\mathbb{C}$ in terms of the irreducible unitary representations
of $K$ given by $$F(k \cdot (x,u)) = \sum_{\nu \in \widehat{K}}
d_{\nu} \sum_{p,q=1}^{d_{\nu}} F^{pq}_{\nu}(x,u)
\phi_{pq}^{\nu}(k)$$ where $\ds{F^{pq}_{\nu}(x,u) = \int_K F(k
\cdot (x,u)) \overline{\phi_{pq}^{\nu} (k)} dk}.$ Putting $k = e,$
the identity element of $K,$ we obtain
$$F(x,u) = \sum_{\nu \in \widehat{K}} d_{\nu}
\sum_{p=1}^{d_{\nu}} F^{pp}_{\nu}(x,u).$$ Then it is easy to see
that for $k \in K,$ \bea \label{eqn1} F^{pp}_{\nu}(k \cdot (x,u))
= \sum_{q=1}^{d_{\nu}} F^{pq}_{\nu}(x,u) \phi_{pq}^{\nu}(k).\eea
From the above and the fact that $f_{ij}^{\pi} \in
L^2(\mathbb{H}^n)$ for every $\pi \in \widehat{K}$ and $1 \leq i,j
\leq d_{\pi}$ it follows that any $f \in L^2(\mathbb{HM})$ can be
written as $$ f(x,u,t,k) = \sum_{\pi \in \widehat{K}} d_{\pi}
\sum_{\nu \in \widehat{K}} d_{\nu} \sum_{i,j=1}^{d_{\pi}}
\sum_{p=1}^{d_{\nu}} (f_{ij}^{\pi})^{pp}_{\nu}(x,u,t)
\phi_{ij}^{\pi}(k).$$ In the following lemma we will prove the
theorem for the functions of the form $\ds{\sum_{i,j=1}^{d_{\pi}}
\sum_{p=1}^{d_{\nu}} f^{pp}_{ij}(x,u,t) \phi_{ij}^{\pi}(k)}.$ Then
we shall prove the orthogonality of each part with respect to the
given inner product so that we can sum up in order to prove
Theorem \ref{thm}.

\begin{lem}\label{lem}
For fixed $\pi, \nu \in \widehat{K},$ Theorem \ref{thm} is true
for functions of the form $$f(x,u,t,k) = \sum_{i,j=1}^{d_{\pi}}
\sum_{p=1}^{d_{\nu}} f^{pp}_{ij}(x,u,t) \phi_{ij}^{\pi}(k)$$ where
for simplicity we write $(f_{ij}^{\pi})^{pp}_{\nu}$ as
$f^{pp}_{ij}.$
\end{lem}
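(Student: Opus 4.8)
The plan is to prove the equality (\ref{eql}) for the given $f$ by computing each side and reducing both to a common weighted Hermite series; the claimed equivalence of the two finiteness conditions is then immediate, and the holomorphic extension of $f$ to $\mathbb{HM}_{\mathbb{C}}$ together with the local estimates needed to justify the term-by-term manipulations follows by the same arguments as in Theorems \ref{sbt} and \ref{poisson}, since only one $\pi$ is present and each $f^{pp}_{ij} \in L^2(\mathbb{H}^n)$ extends holomorphically while the matrix coefficients $\phi_{ij}^{\pi}$ extend to $G.$ Throughout I would write $g = ke^{iH}$ with $k \in K,$ $H \in \underline{h},$ and $\tau = t+is,$ and use freely the orthonormality of the special Hermite functions $\{\phi_{\alpha\beta}^{\la}\}$ in $L^2(\mathbb{R}^{2n})$ together with the Plancherel and Schur relations for $K.$

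For the right-hand side I would start from the explicit action of $\rho_{\sigma}^{\la}(z,w,\tau,g)\widehat{f}(\la,\sigma)$ on the orthonormal basis $\{\phi_{\gamma}^{\la} \otimes e_{i}^{\sigma}\}$ computed just before the theorem, form $\|\rho_{\sigma}^{\la}(z,w,\tau,g)\widehat{f}(\la,\sigma)\|_{HS}^2$ by pairing against $\phi_{\beta}^{\la} \otimes e_{j}^{\sigma}$ and summing over $\gamma, i, \beta, j,$ and then sum over $\sigma \in \widehat{K}$ weighted by $d_{\sigma}.$ The prefactor $e^{i\la(t+is)}$ contributes the central weight $e^{2\la s}.$ I would collapse the integrals $\int_K \eta_{\alpha\gamma}^{\la}(ke^{iH}k')\,dk'$ against the $\sigma(ke^{iH}k')$ factors using Plancherel for $K$ and Schur orthogonality exactly as in (\ref{pr1}), while the intertwining relation (\ref{met}) confines the metaplectic indices to a single level $|\alpha|=|\gamma|.$ Finally I would apply (\ref{schr}) to rewrite the complexified matrix coefficients $\langle \pi_{\la}(x+iy,u+iv)\cdots\phi_{\alpha}^{\la},\phi_{\beta}^{\la}\rangle$ as special Hermite functions at the conjugate point, turning the $\la$-integrand into the twisted-convolution norms $\|(f^{pp}_{ij})^{\la} *_{\la} \varphi_{ma}^{\la}\|^2$ weighted by $\varphi_{ma}^{\la}(2iy,2iv).$

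For the left-hand side I would multiply out $(z,w,\tau,g)^{-1}X$ in the complexified group law, so that $f$ is evaluated with Heisenberg argument a holomorphic shift of $X$ and $K$-argument the fixed element $g^{-1}=e^{-iH}k^{-1};$ since only one $\pi$ occurs, $|f((z,w,\tau,g)^{-1}X)|^2$ is a double sum in $\phi_{ij}^{\pi}(g^{-1}).$ Integrating over $\mathbb{H}^n,$ the off-diagonal special Hermite contributions should drop by orthogonality, and the surviving $\mathbb{H}^n$-integral of $|(f^{pp}_{ij})^{\la}(\cdot)|^2$ over the complexified arguments produces, by the same special-Hermite manipulations that underlie Theorem \ref{Gutzmer} and via (\ref{schr}), the series $\sum_{m,a}(\dim\mathcal{P}_{ma})^{-1}\varphi_{ma}^{\la}(2iy,2iv)\|(f^{pp}_{ij})^{\la}*_{\la}\varphi_{ma}^{\la}\|^2;$ meanwhile $\sum_{i,j}|\phi_{ij}^{\pi}(g^{-1})|^2$ collapses to $\chi_{\pi}(\exp 2iH)$ using the unitarity of $\pi(k)$ and the self-adjointness of $\pi(e^{iH})$ for $H \in \underline{h},$ exactly as in the first part of Theorem \ref{poisson}.

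The main obstacle I expect is the bookkeeping required to show that these two independently computed expressions agree. One must verify that the interplay between the metaplectic representation $\mu_{\la},$ which by (\ref{met}) mixes Hermite functions only within a fixed degree, and the complexified Schrödinger representation produces, after (\ref{schr}) and Schur orthogonality, precisely the same weighted Hermite series on both sides, and in particular that the cross terms reorganize so that the clean weight $\varphi_{ma}^{\la}(2iy,2iv)$ survives even though the left-hand side carries no explicit $K$-average; here the $\nu$-decomposition (\ref{eqn1}) of $f^{pp}_{ij}$ must be seen to play consistent roles in the computation of $\widehat{f}(\la,\sigma)$ and in the direct $\mathbb{H}^n$-integral. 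Tracking the normalizing constants is the other delicate point: the factors $(2\pi)^{\pm n/2}|\la|^{\pm n/2}$ built into $\phi_{\alpha\beta}^{\la}$ and the $|\la|^{n}\,d\la$ Plancherel density must combine so that both sides carry exactly $(2\pi)^{-2n}.$ Once the weight $\varphi_{ma}^{\la}(2iy,2iv),$ the central factor $e^{2\la s},$ and the character $\chi_{\pi}(\exp 2iH)$ are seen to match, (\ref{eql}) follows and the lemma is proved.
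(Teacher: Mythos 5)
Your forward-direction strategy --- compute $\sum_{\sigma} d_{\sigma}\|\rho_{\sigma}^{\la}(z,w,\tau,g)\widehat{f}(\la,\sigma)\|_{HS}^2$ and $\int_{\mathbb{H}^n}|f((z,w,\tau,g)^{-1}X)|^2dX$ separately and match them --- is the paper's, but the intermediate expression you aim for is wrong. You want both sides to collapse to $\chi_{\pi}(\exp 2iH)\sum_{m,a}(\dim \mathcal{P}_{ma})^{-1}\varphi_{ma}^{\la}(2iy,2iv)\,\|(f^{pp}_{ij})^{\la}*_{\la}\varphi_{ma}^{\la}\|^2$. That Gutzmer-type diagonal form is what appears in Theorem \ref{poisson}, where the left-hand side carries the averages $\int_K\int_K$ and $\int_{|Im(z,w)|=r}d\mu_r$; it is precisely those averages, fed through Theorem \ref{Gutzmer} and Schur orthogonality, that kill the off-diagonal terms. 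In the present lemma the point $(z,w,\tau,g)$ is fixed, there is no such average, and the cross terms do not reorganize: the common expression both sides actually reduce to (the paper's (\ref{p1})) retains the full square $\left|\sum_{\delta}\phi^{\la}_{\delta\beta}(z,w)\sum_{i,p,q}\langle (f^{\la})^{pq}_{ij},\overline{\phi^{\la}_{\alpha\delta}}\rangle\,\phi_{pq}^{\nu}(e^{-iH}k^{-1})\phi_{ib}^{\pi}(e^{-iH})\right|^2$ summed over $\alpha,\beta,j,b$; in particular the cross terms in $i$ and in $\delta$ survive and $\sum_{i,j}|\phi_{ij}^{\pi}(g^{-1})|^2$ does not produce $\chi_{\pi}(\exp 2iH)$. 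You flag this as the main obstacle, but it is not bookkeeping to be overcome --- the diagonalization simply fails, and the correct common target is the non-diagonal expression. This part is repairable by carrying the computation to its actual endpoint, but as stated the claimed identity of each side with the Gutzmer series is false.

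The more serious gap is the converse direction, which your proposal does not address. You assert that once (\ref{eql}) is proved the equivalence of the two finiteness conditions is immediate, and that holomorphicity follows ``as in Theorems \ref{sbt} and \ref{poisson} since each $f^{pp}_{ij}\in L^2(\mathbb{H}^n)$ extends holomorphically.'' A general $L^2$ function on $\mathbb{H}^n$ has no holomorphic extension; in Theorems \ref{sbt} and \ref{poisson} the extension is created by the smoothing operators $e^{-t\Delta}$ and $e^{-q\Delta^{1/2}}$, which are absent here. Moreover (\ref{eql}) is derived \emph{under the hypothesis} that $f$ is already holomorphic, so it cannot yield the implication from finiteness of $\sum_{\sigma}d_{\sigma}\int\|\rho_{\sigma}^{\la}(z,w,\tau,g)\widehat{f}(\la,\sigma)\|_{HS}^2|\la|^n d\la$ to holomorphicity of $f$. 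The paper's converse is a separate argument: set $e^{iH}=I$ and integrate over $K$ with Schur orthogonality to isolate each $(f^{\la})^{pq}_{lj}$; restrict to the maximal torus and use $\phi^{\la}_{\alpha\delta}(k_{\theta}\cdot(x,u))=e^{i(\delta-\alpha)\cdot\theta}\phi^{\la}_{\alpha\delta}(x,u)$ together with the weights of $\nu|_T$ to show $\langle (f^{\la})^{pq}_{lj},\overline{\phi^{\la}_{\alpha\delta}}\rangle$ vanishes unless $\delta=\alpha-\nu_q$; deduce $\sum_{\alpha}|\langle (f^{\la})^{pq}_{lj},\overline{\phi^{\la}_{\alpha+\nu_q~\alpha}}\rangle|^2\,\phi^{\la}_{\alpha\alpha}(2iy,2iv)<\infty$ as in (\ref{eqn}); and finally use the exponential growth of $\phi^{\la}_{\alpha\alpha}(2iy,2iv)$ to sum the Hermite expansion of $(f^{\la})^{pq}_{lj}$ uniformly on compact subsets of $\mathbb{C}^{2n}$. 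Without some version of this step the lemma is not proved.
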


\begin{proof}

First we assume that $f \in L^2(\mathbb{HM})$ is holomorphic on
$\mathbb{C}^n \times \mathbb{C}^n \times \mathbb{C} \times G$ with
$$\int_{\mathbb{H}^n} |f((z,w,\tau,g)^{-1} X|^2 dX < \infty ~~~~~~ \forall ~~
(z,w,\tau,g) \in \mathbb{C}^n \times \mathbb{C}^n \times
\mathbb{C} \times G$$ and is of the form $$f(x,u,t,k) =
\sum_{i,j=1}^{d_{\pi}} \sum_{p=1}^{d_{\nu}} f^{pp}_{ij}(x,u,t)
\phi_{ij}^{\pi}(k).$$ For $(x,u,t,k)\in \mathbb{HM},$ we have
\beas && \rho_{\sigma}^{\la}(x,u,t,k) \widehat{f}(\la,\sigma)
(\phi_{\gamma}^{\la} \otimes e_{l}^{\sigma}) \\
&=& e^{i \la t} \sum_{|\alpha| = |\gamma|} \int_K \eta_{\alpha
\gamma}^{\la}(kk') \int_{\mathbb{R}^{2n}} f^{\la}(x',u',k')
(\pi_{\la}(x,u) \pi_{\la}(k \cdot(x',u')) \phi_{\alpha}^{\la} \\
&& \otimes \sigma(kk')e_{l}^{\sigma}) dx' du' dk'. \eeas Making
changes of variables $\ds{(x',u') \ra k^{-1} \cdot (x',u')},$ $k'
\ra k^{-1}k'$ and using the special form of $f$ along with
(\ref{eqn1}) we obtain that the above equals \beas && e^{i \la t}
\sum_{|\alpha| = |\gamma|} \int_K \eta_{\alpha \gamma}^{\la}(kk')
\int_{\mathbb{R}^{2n}} f^{\la}(k^{-1}(x',u'),k') (\pi_{\la}(x,u)
\pi_{\la}(x',u') \phi_{\alpha}^{\la} \\ && \otimes
\sigma(kk')e_{l}^{\sigma}) dx' du' dk' \\ &=& e^{i \la t}
\sum_{|\alpha| = |\gamma|} \sum_{i,j=1}^{d_{\pi}}
\sum_{p,q=1}^{d_{\nu}} \int_K \eta_{\alpha \gamma}^{\la}(k')
\int_{\mathbb{R}^{2n}} (f^{\la})^{pq}_{ij}(x',u')
(\pi_{\la}(x,u) \pi_{\la}(x',u') \phi_{\alpha}^{\la} \\
&& \otimes \sigma(k')e_{l}^{\sigma}) dx' du' ~
\phi_{ij}^{\pi}(k^{-1}k') \phi_{pq}^{\nu}(k^{-1}) dk'.\eeas Then,
for $(z,w,\tau,g) = (x+iy,u+iv,t+is,ke^{iH}) \in \mathbb{C}^n
\times \mathbb{C}^n \times \mathbb{C} \times G,$ we get that \beas
&& \rho_{\sigma}^{\la}(z,w,\tau,g) \widehat{f}(\la,\sigma)
(\phi_{\gamma}^{\la} \otimes e_{l}^{\sigma}) \\ &=& e^{i \la
(t+is)} \sum_{|\alpha| = |\gamma|} \sum_{i,j=1}^{d_{\pi}}
\sum_{p,q=1}^{d_{\nu}} \int_K \int_{\mathbb{R}^{2n}}
(f^{\la})^{pq}_{ij}(x',u')
(\pi_{\la}(x+iy,u+iv) \pi_{\la}(x',u') \phi_{\alpha}^{\la} \\
&& \otimes \sigma(k')e_{l}^{\sigma}) dx' du' ~ \eta_{\alpha
\gamma}^{\la}(k') \phi_{ij}^{\pi}((ke^{iH})^{-1}k')
\phi_{pq}^{\nu}((ke^{iH})^{-1}) dk'. \eeas Thus, expanding the
inner product $\langle \pi_{\la}(z,w) \pi_{\la}(x',u')
\phi_{\alpha}^{\la} , \phi_{\beta}^{\la} \rangle$ in terms of
$\phi_{\delta}^{\la}$ and using (\ref{schr}), we have \beas &&
\left\langle \rho_{\sigma}^{\la}(z,w,\tau,g)
\widehat{f}(\la,\sigma)(\phi_{\gamma}^{\la} \otimes
e_{l}^{\sigma}) , \phi_{\beta}^{\la} \otimes e_{m}^{\sigma}
\right\rangle \\ &=& (2\pi)^{n} |\la|^{-n} e^{i \la (t+is)}
\sum_{\delta \in \mathbb{N}^n} \sum_{|\alpha| = |\gamma|}
\sum_{i,j=1}^{d_{\pi}} \sum_{p,q=1}^{d_{\nu}} \int_K
\int_{\mathbb{R}^{2n}} (f^{\la})^{pq}_{ij}(x',u')
\phi^{\la}_{\alpha \delta}(x',u') \phi^{\la}_{\delta \beta}(z,w) \\
&& \phi^{\sigma}_{ml}(k') dx' du' ~ \eta_{\alpha \gamma}^{\la}(k')
\phi_{ij}^{\pi}(e^{-iH}k^{-1}k') \phi_{pq}^{\nu}(e^{-iH}k^{-1})
dk' \eeas so that \beas && (2\pi)^{-2n} |\la|^{2n} \left\|
\rho_{\sigma}^{\la}(z,w,\tau,g)
\widehat{f}(\la,\sigma)(\phi_{\gamma}^{\la} \otimes
e_{l}^{\sigma}) \right\|^2 \\ &=& \sum_{\beta \in \mathbb{N}^n}
\sum_{1 \leq m \leq d_{\sigma}} e^{-2\la s} \left|
\sum_{p,q=1}^{d_{\nu}} \sum_{\delta \in \mathbb{N}^n}
\sum_{|\alpha| = |\gamma|} \sum_{i,j=1}^{d_{\pi}}
\left(\int_{\mathbb{R}^{2n}} (f^{\la})^{pq}_{ij}(x',u')
\phi^{\la}_{\alpha \delta}(x',u') dx' du'\right) \right. \\ &&
\left. \left(\int_K \phi^{\sigma}_{ml}(k') \eta_{\alpha
\gamma}^{\la}(k') \phi_{ij}^{\pi}(e^{-iH}k^{-1}k') dk'\right)
\phi_{pq}^{\nu}(e^{-iH}k^{-1}) \phi^{\la}_{\delta \beta}(z,w)
\right|^2, \eeas and summing over $\gamma$ and $l$ we have \beas
&& (2\pi)^{-2n} |\la|^{2n} \left\| \rho_{\sigma}^{\la}(z,w,\tau,g)
\widehat{f}(\la,\sigma) \right\|_{HS}^2 \\ &=& \sum_{\gamma, \beta
\in \mathbb{N}^n} \sum_{1 \leq l,m \leq d_{\sigma}} e^{-2\la s}
\left| \sum_{p,q=1}^{d_{\nu}} \sum_{\delta \in \mathbb{N}^n}
\sum_{|\alpha| = |\gamma|} \sum_{i,j=1}^{d_{\pi}}
\left(\int_{\mathbb{R}^{2n}} (f^{\la})^{pq}_{ij}(x',u')
\phi^{\la}_{\alpha \delta}(x',u') dx' du'\right) \right. \\ &&
\left. \left(\int_K \phi^{\sigma}_{ml}(k') \eta_{\alpha
\gamma}^{\la}(k') \phi_{ij}^{\pi}(e^{-iH}k^{-1}k') dk'\right)
\phi_{pq}^{\nu}(e^{-iH}k^{-1}) \phi^{\la}_{\delta \beta}(z,w)
\right|^2. \eeas Using Plancherel theorem for $K$ we derive that
\beas && (2\pi)^{-2n} |\la|^{2n} \sum_{\sigma \in \widehat{K}}
d_{\sigma} \left\| \rho_{\sigma}^{\la}(z,w,\tau,g)
\widehat{f}(\la,\sigma) \right\|_{HS}^2 \\ &=& e^{-2\la s}
\sum_{\gamma, \beta \in \mathbb{N}^n} \int_K
\Bigl|\sum_{p,q=1}^{d_{\nu}} \phi_{pq}^{\nu}(e^{-iH}k^{-1})
\sum_{\delta \in \mathbb{N}^n} \phi^{\la}_{\delta \beta}(z,w)
\sum_{|\alpha| = |\gamma|} \sum_{i,j=1}^{d_{\pi}} \left\langle
(f^{\la})^{pq}_{ij}, \overline{\phi^{\la}_{\alpha \delta}}
\right\rangle \\ && \eta_{\alpha \gamma}^{\la}(k')
\phi_{ij}^{\pi}(e^{-iH}k^{-1}k')\Bigl|^2 dk'. \eeas Applying the
same arguments as in (\ref{pr1}) and change of variables $k' \ra
kk'$ we obtain that the above equals \beas && e^{-2\la s}
\sum_{\alpha, \beta \in \mathbb{N}^n} \int_K
\left|\sum_{p,q=1}^{d_{\nu}} \phi_{pq}^{\nu}(e^{-iH}k^{-1})
\sum_{\delta \in \mathbb{N}^n} \phi^{\la}_{\delta \beta}(z,w)
\sum_{i,j=1}^{d_{\pi}} \left\langle (f^{\la})^{pq}_{ij},
\overline{\phi^{\la}_{\alpha \delta}} \right\rangle
\phi_{ij}^{\pi}(e^{-iH}k') \right|^2 dk' \\ &=& e^{-2\la s}
\sum_{\alpha, \beta \in \mathbb{N}^n} \int_K
\Bigl|\sum_{p,q=1}^{d_{\nu}} \phi_{pq}^{\nu}(e^{-iH}k^{-1})
\sum_{\delta \in \mathbb{N}^n} \phi^{\la}_{\delta \beta}(z,w)
\sum_{i,j,b=1}^{d_{\pi}} \left\langle (f^{\la})^{pq}_{ij},
\overline{\phi^{\la}_{\alpha \delta}} \right\rangle \\ &&
\phi_{ib}^{\pi}(e^{-iH}) \phi_{bj}^{\pi}(k') \Bigl|^2 dk'. \eeas
Now, using Schur's orthogonality relations we get that \beas &&
(2\pi)^{-2n} |\la|^{2n} \sum_{\sigma \in \widehat{K}} d_{\sigma}
\left\| \rho_{\sigma}^{\la}(z,w,\tau,g) \widehat{f}(\la,\sigma)
\right\|_{HS}^2 \\ &=& \frac{e^{-2\la s}}{d_{\pi}} \sum_{\alpha,
\beta \in \mathbb{N}^n} \sum_{j,b=1}^{d_{\pi}} \left|
\sum_{p,q=1}^{d_{\nu}} \phi_{pq}^{\nu}(e^{-iH}k^{-1}) \sum_{\delta
\in \mathbb{N}^n} \phi^{\la}_{\delta \beta}(z,w)
\sum_{i=1}^{d_{\pi}} \left\langle (f^{\la})^{pq}_{ij},
\overline{\phi^{\la}_{\alpha \delta}} \right\rangle
\phi_{ib}^{\pi}(e^{-iH}) \right|^2. \eeas Hence we have \bea
\label{p1} \begin{array}{rcll} && \ds{(2\pi)^{-2n} \sum_{\sigma
\in \widehat{K}} d_{\sigma} \int_{\mathbb{R}} \left\|
\rho_{\sigma}^{\la}(z,w,\tau,g) \widehat{f}(\la,\sigma)
\right\|_{HS}^2 |\la|^{n} d\la} \\ &=& \ds{\int_{\mathbb{R}}
\sum_{\alpha, \beta \in \mathbb{N}^n} \sum_{j,b=1}^{d_{\pi}}
\Bigl|\sum_{p,q=1}^{d_{\nu}} \phi_{pq}^{\nu}(e^{-iH}k^{-1})
\sum_{\delta \in \mathbb{N}^n} \phi^{\la}_{\delta \beta}(z,w)
\sum_{i=1}^{d_{\pi}} \left\langle (f^{\la})^{pq}_{ij},
\overline{\phi^{\la}_{\alpha \delta}} \right\rangle}
\\ && \ds{ \phi_{ib}^{\pi}(e^{-iH})\Bigl|^2 \frac{e^{-2\la s}}{d_{\pi}}
|\la|^{-n} d\la.}
\end{array} \eea

We have obtained an expression for one part of Lemma \ref{lem}.
Now, looking at the other part, we have \beas &&
f\left((x,u,t,k)^{-1} (x',u',t',k')\right) \\ &=&
f\left(k^{-1}(x'-x,u'-u), t'-t-\frac{1}{2}(u \cdot x' - x \cdot
u'),k^{-1}k' \right) \\ &=& \sum_{i,j=1}^{d_{\pi}}
\sum_{p,q=1}^{d_{\nu}} f^{pq}_{ij}
\left(x'-x,u'-u,t'-t-\frac{1}{2}(u \cdot x' - x \cdot u') \right)
\phi_{pq}^{\nu}\left(k^{-1}\right)
\phi_{ij}^{\pi}\left(k^{-1}k'\right). \eeas Since $f$ is
holomorphic on $\mathbb{HM}_{\mathbb{C}},$ each $f^{pq}_{ij}$ also
have a holomorphic extension to $\mathbb{HM}_{\mathbb{C}}.$ For
$(z,w,\tau,g) = (x+iy,u+iv,t+is,ke^{iH}) \in \mathbb{C}^n \times
\mathbb{C}^n \times \mathbb{C} \times G,$
we get \beas && f\left((z,w,\tau,g)^{-1} (x',u',t',k')\right) \\
&=& \sum_{i,j=1}^{d_{\pi}} \sum_{p,q=1}^{d_{\nu}} f^{pq}_{ij}
\left(x'-z,u'-w,t'-\tau-\frac{1}{2}(w \cdot x' - z \cdot u')
\right) \phi_{pq}^{\nu}\left(e^{-iH}k^{-1}\right) \\
&& \phi_{ij}^{\pi}\left(e^{-iH}k^{-1}k'\right). \eeas Taking the
$L^2$-norm with respect to $k'$ and applying change of variables
$k' \ra kk'$ and Schur's orthogonality relations, we obtain \beas
&& \int_K \left| f\left((z,w,\tau,g)^{-1} (x',u',t',k')\right)
\right|^2 dk'
\\ &=& \int_K \Bigl|\sum_{i,j,l=1}^{d_{\pi}} \sum_{p,q=1}^{d_{\nu}} f^{pq}_{ij}
\left(x'-z,u'-w,t'-\tau-\frac{1}{2}(w \cdot x' - z \cdot u')
\right) \phi_{pq}^{\nu}\left(e^{-iH}k^{-1}\right) \\
&& \phi_{il}^{\pi}\left(e^{-iH}\right)
\phi_{lj}^{\pi}\left(k'\right)\Bigl|^2 dk' \\ &=&
\frac{1}{d_{\pi}} \sum_{j,l=1}^{d_{\pi}}
\Bigl|\sum_{i=1}^{d_{\pi}} \sum_{p,q=1}^{d_{\nu}} f^{pq}_{ij}
\left(x'-z,u'-w,t'-\tau-\frac{1}{2}(w \cdot x' - z \cdot u')
\right) \phi_{pq}^{\nu}\left(e^{-iH}k^{-1}\right) \\
&& \phi_{il}^{\pi}\left(e^{-iH}\right) \Bigl|^2 . \eeas Now,
integrating over $t'$ we derive that \beas && \int_{\mathbb{R}}
\int_K \left| f\left((z,w,\tau,g)^{-1} (x',u',t',k')\right)
\right|^2 dk' dt' \\ &=& \frac{1}{d_{\pi}} \sum_{j,l=1}^{d_{\pi}}
\int_{\mathbb{R}} \Bigl|\sum_{i=1}^{d_{\pi}}
\sum_{p,q=1}^{d_{\nu}} (f^{pq}_{ij})^{\la}(x'-z,u'-w)
\phi_{pq}^{\nu}\left(e^{-iH}k^{-1}\right)
\phi_{il}^{\pi}\left(e^{-iH}\right) \Bigl|^2 \\ && e^{2 \la
(-s-\frac{1}{2}(v \cdot x' - y \cdot u'))} d\la. \eeas So we have
\beas && \int_{\mathbb{R}^{2n}} \int_{\mathbb{R}} \int_K \left|
f\left((z,w,\tau,g)^{-1} (x',u',t',k')\right) \right|^2 dk' dt' dx' du' \\
&=& \frac{1}{d_{\pi}} \sum_{j,l=1}^{d_{\pi}} \int_{\mathbb{R}}
e^{-2 \la s} \int_{\mathbb{R}^{2n}} \Bigl|\sum_{i=1}^{d_{\pi}}
\sum_{p,q=1}^{d_{\nu}} (f^{pq}_{ij})^{\la}(x'-z,u'-w)
\phi_{pq}^{\nu}\left(e^{-iH}k^{-1}\right)
\phi_{il}^{\pi}\left(e^{-iH}\right) \Bigl|^2 \\ && e^{\la (u'
\cdot y - v' \cdot x)} dx' du' d\la. \eeas From using (\ref{schr})
it follows that $$ \overline{\phi^{\la}_{\alpha \delta}(x,u)} =
\phi^{\la}_{\delta \alpha}(-x,-u).$$ So, we can expand
$(f^{pq}_{ij})^{\la}$ in terms of the orthonormal basis
$\overline{\phi^{\la}_{\alpha \delta}}$ to get $$
(f^{pq}_{ij})^{\la}(x,u) =  \sum_{\alpha, \delta \in \mathbb{N}^n}
\left\langle (f^{\la})^{pq}_{ij}, \overline{\phi^{\la}_{\alpha
\delta}} \right\rangle \phi^{\la}_{\delta \alpha}(-x,-u)$$ and
hence we have $$ (f^{pq}_{ij})^{\la}(x'-z,u'-w) =  \sum_{\alpha,
\delta \in \mathbb{N}^n} \left\langle (f^{\la})^{pq}_{ij},
\overline{\phi^{\la}_{\alpha \delta}} \right\rangle
\phi^{\la}_{\delta \alpha}(z-x',w-u').$$ Again, using (\ref{schr})
and the orthonormality of $\phi^{\la}_{\beta}$ we obtain that the
above equals $$ |\la|^{-\frac{n}{2}} e^{\frac{i \la}{2} (z \cdot
u' - w \cdot x')} \sum_{\alpha, \beta, \delta \in \mathbb{N}^n}
\left\langle (f^{\la})^{pq}_{ij}, \overline{\phi^{\la}_{\alpha
\delta}} \right\rangle \phi^{\la}_{\delta \beta}(z,w)
\phi^{\la}_{\beta \alpha}(-x',-u').$$ Hence we get that \beas &&
\int_{\mathbb{R}^{2n}} \int_{\mathbb{R}} \int_K \left|
f\left((z,w,\tau,g)^{-1} (x',u',t',k')\right) \right|^2 dk' dt' dx' du' \\
&=& \frac{1}{d_{\pi}} \sum_{j,l=1}^{d_{\pi}} \int_{\mathbb{R}}
|\la|^{-n} e^{-2 \la s} \int_{\mathbb{R}^{2n}}
\Bigl|\sum_{i=1}^{d_{\pi}} \sum_{p,q=1}^{d_{\nu}} \sum_{\alpha,
\beta, \delta \in \mathbb{N}^n} \left\langle (f^{\la})^{pq}_{ij},
\overline{\phi^{\la}_{\alpha \delta}} \right\rangle
\phi^{\la}_{\delta \beta}(z,w) \phi^{\la}_{\beta \alpha}(-x',-u')
\\ && \phi_{pq}^{\nu}\left(e^{-iH}k^{-1}\right)
\phi_{il}^{\pi}\left(e^{-iH}\right) \Bigl|^2  dx' du' d\la \\
&=& \frac{1}{d_{\pi}} \sum_{j,l=1}^{d_{\pi}} \sum_{\alpha, \beta
\in \mathbb{N}^n} \int_{\mathbb{R}} |\la|^{-n} e^{-2 \la s}
\Bigl|\sum_{i=1}^{d_{\pi}} \sum_{p,q=1}^{d_{\nu}} \sum_{\delta \in
\mathbb{N}^n} \left\langle (f^{\la})^{pq}_{ij},
\overline{\phi^{\la}_{\alpha \delta}} \right\rangle
\phi^{\la}_{\delta \beta}(z,w) \\ &&
\phi_{pq}^{\nu}\left(e^{-iH}k^{-1}\right)
\phi_{il}^{\pi}\left(e^{-iH}\right) \Bigl|^2 d\la. \eeas From
(\ref{p1}) and above we obtain the required equality.

For the converse, it is enough to prove the holomorphicity of $f$
which in turn follows from the holomorphicity of
$(f^{\la})^{pq}_{ij}$ and the equality follows from the above
argument. Assume that
$$\sum_{\sigma \in \widehat{K}} d_{\sigma} \int_{\mathbb{R}}
\left\| \rho_{\sigma}^{\la}(z,w,\tau,g) \widehat{f}(\la,\sigma)
\right|_{HS}^2 |\la|^{n} d\la < \infty ~~~~ \forall ~~
(z,w,\tau,g) \in \mathbb{C}^n \times \mathbb{C}^n \times
\mathbb{C} \times G.$$ From the above it is clear that for every
$1 \leq l, j \leq d_{\pi}$ and $\la$ almost everywhere \beas
\sum_{\alpha, \beta \in \mathbb{N}^n} \left|\sum_{i=1}^{d_{\pi}}
\sum_{p,q=1}^{d_{\nu}} \sum_{\delta \in \mathbb{N}^n} \left\langle
(f^{\la})^{pq}_{ij}, \overline{\phi^{\la}_{\alpha \delta}}
\right\rangle \phi^{\la}_{\delta \beta}(z,w)
\phi_{pq}^{\nu}\left(e^{-iH}k^{-1}\right)
\phi_{il}^{\pi}\left(e^{-iH}\right) \right|^2 < \infty \eeas for
all $\ds{(z,w) \in \mathbb{C}^n \times \mathbb{C}^n}$ and $ke^{iH}
\in G.$ We can put $e^{iH}=I,$ the identity of the group to get
\beas \sum_{\alpha, \beta \in \mathbb{N}^n}
\left|\sum_{p,q=1}^{d_{\nu}} \sum_{\delta \in \mathbb{N}^n}
\left\langle (f^{\la})^{pq}_{lj}, \overline{\phi^{\la}_{\alpha
\delta}} \right\rangle \phi^{\la}_{\delta \beta}(z,w)
\phi_{pq}^{\nu}(k)\right|^2 < \infty \eeas for all $\ds{(z,w) \in
\mathbb{C}^n \times \mathbb{C}^n}$ and $k \in K.$ Integrating over
$K$ and using Schur's orthogonality relations we have \beas &&
\sum_{\alpha, \beta \in \mathbb{N}^n} \int_K
\left|\sum_{p,q=1}^{d_{\nu}} \sum_{\delta \in \mathbb{N}^n}
\left\langle (f^{\la})^{pq}_{lj}, \overline{\phi^{\la}_{\alpha
\delta}} \right\rangle
\phi^{\la}_{\delta \beta}(z,w) \phi_{pq}^{\nu}(k)\right|^2 dk \\
&=& \sum_{\alpha, \beta \in \mathbb{N}^n} \sum_{p,q=1}^{d_{\nu}}
\left|\sum_{\delta \in \mathbb{N}^n} \left\langle
(f^{\la})^{pq}_{lj}, \overline{\phi^{\la}_{\alpha \delta}}
\right\rangle \phi^{\la}_{\delta \beta}(z,w)\right|^2 < \infty.
\eeas Hence we derive that for each $1 \leq p, q \leq d_{\nu},$ $1
\leq l, j \leq d_{\pi}$ and $\ds{(z,w) \in \mathbb{C}^n \times
\mathbb{C}^n}$ \beas \sum_{\alpha, \beta \in \mathbb{N}^n}
\left|\sum_{\delta \in \mathbb{N}^n} \left\langle
(f^{\la})^{pq}_{lj}, \overline{\phi^{\la}_{\alpha \delta}}
\right\rangle \phi^{\la}_{\delta \beta}(z,w)\right|^2 < \infty.
\eeas Let $T$ be the maximal torus of $K \subseteq U(n).$ After a
conjugation by an element of $U(n)$ if necessary, we can consider
that $T \subseteq \mathbb{T}^n,$ the $n$-dimensional torus which
is the maximal torus of $U(n).$ Now, any element $k_{\theta} \in
T$ can be written as $e^{i \theta} = (e^{i \theta_1}, e^{i
\theta_2}, \cdots , e^{i \theta_n})$ where $\theta = (\theta_1,
\theta_2, \cdots, \theta_n).$ Notice that some of these
$\theta_j$'s may be $0$ depending on $T.$ Using the relation
(\ref{meta}) and the properties of the metaplectic representation,
we have
$$\phi^{\la}_{\alpha \delta} (k_{\theta} \cdot (x,u)) = e^{i(\delta - \alpha)
\cdot \theta} \phi^{\la}_{\alpha \delta}(x,u).$$ Moreover, for
each $\nu \in \widehat{K},$ $\nu|_{T}$ breaks up into at most
$d_{\nu}$ irreducible components, not necessarily distinct, which
we call $\nu_1, \nu_2, \cdots, \nu_m \in \mathbb{Z}^n$ (abuse of
notation) such that $\nu_a(e^{i \theta}) = e^{i \nu_a \cdot
\theta}$ where $1 \leq a \leq m \leq d_{\nu}.$ Choosing
appropriate basis elements, the matrix coefficients
$\phi_{ab}^{\nu}$ of $\nu$ satisfy $\phi_{ab}^{\nu}(e^{i \theta})
= \delta_{ab} e^{i \nu_a \cdot \theta}$ where $\delta$ is the
Kronecker delta. So we obtain \beas && \left\langle
(f^{\la})^{pq}_{lj}, \overline{\phi^{\la}_{\alpha \delta}}
\right\rangle \\ &=& \int_T \int_{\mathbb{R}^{2n}}
(f^{\la})^{pq}_{lj}(k \cdot (x,u)) \phi^{\la}_{\alpha \delta}(k
\cdot (x,u)) dx du dk \\ &=& \sum_{r=1}^{d_{\nu}} \int_T
\int_{\mathbb{R}^{2n}} (f^{\la})^{pr}_{lj}(x,u)
\phi_{qr}^{\nu}(e^{i \theta}) e^{i(\delta - \alpha) \cdot \theta}
\phi^{\la}_{\alpha \delta}(x,u) dx du dk \\ &=& \left\langle
(f^{\la})^{pq}_{lj}, \overline{\phi^{\la}_{\alpha ~ \alpha-\nu_q}}
\right\rangle \delta_{\delta, \alpha-\nu_q} .\eeas Hence we get
that for each $1 \leq p, q \leq d_{\nu},$ $1 \leq l, j \leq
d_{\pi}$ and $\ds{(z,w) \in \mathbb{C}^n \times \mathbb{C}^n}$
\beas \sum_{\alpha \in \mathbb{N}^n} \left|\left\langle
(f^{\la})^{pq}_{lj}, \overline{\phi^{\la}_{\alpha ~ \alpha-\nu_q}}
\right\rangle \right|^2 \left(\sum_{\beta \in \mathbb{N}^n}
\left|\phi^{\la}_{\alpha-\nu_q ~ \beta}(z,w)\right|^2 \right) <
\infty. \eeas From the orthonormality properties of
$\phi^{\la}_{\alpha \beta}$ it follows that \bea \label{eqn}
\sum_{\alpha \in \mathbb{N}^n} \left|\left\langle
(f^{\la})^{pq}_{lj}, \overline{\phi^{\la}_{\alpha+\nu_q ~ \alpha}}
\right\rangle \right|^2 \phi^{\la}_{\alpha \alpha}(2iy,2iv) <
\infty. \eea Now, using the above we want to prove the
holomorphicity of $(f^{\la})^{pq}_{ij}.$ We note that for $(z,w)
\in \mathbb{C}^{2n},$ $$ (f^{\la})^{pq}_{ij}(z,w) = \sum_{\alpha
\in \mathbb{N}^n} \left\langle (f^{\la})^{pq}_{lj},
\overline{\phi^{\la}_{\alpha+\nu_q ~ \alpha}} \right\rangle
\phi^{\la}_{\alpha ~ \alpha+\nu_q}(-z,-w) $$ if the sum converges.
Consider a compact set $M \subseteq \mathbb{C}^{2n}$ such that
$|y|^2 + |v|^2 \leq r^2$ where $(z,w)=(x+iy,u+iv).$ We know that
$$ \phi^{\la}_{\alpha \alpha}(2iy,2iv) = C e^{\la(|y|^2 + |v|^2)}
L_{\alpha}^{0}(-2\la(|y|^2 + |v|^2))$$ for any $y,v \in
\mathbb{R}^n$ where $\ds{L_{\alpha}^0(z) = \prod_{j=1}^n
L_{\alpha_j}^0 (\frac{1}{2} |z_j|^2)}.$ Since $\phi_{\alpha
\alpha}(2iy,2iv)$ has exponential growth and (\ref{eqn}) implies
holomorphicity of $(f^{\la})^{pq}_{ij}$ as in the previous
section.
\end{proof}

\vspace{1.2in}

\noindent \textit{Proof of Theorem \ref{thm}.}

To prove the theorem, it is enough to prove the orthogonality of
the components $\ds{f^{\nu}_{\pi}(x,u,t,k) =
\sum_{i,j=1}^{d_{\pi}} \sum_{p=1}^{d_{\nu}} f^{pp}_{ij}(x,u,t)
\phi_{ij}^{\pi}(k)}.$ For $\pi, \nu, \pi', \nu' \in \widehat{K},$
we have \beas && (2\pi)^{-2n} |\la|^{2n} \sum_{\sigma \in
\widehat{K}} d_{\sigma} \left\langle
\rho_{\sigma}^{\la}(z,w,\tau,g)
\widehat{f^{\nu}_{\pi}}(\la,\sigma),
\rho_{\sigma}^{\la}(z,w,\tau,g)
\widehat{f^{\nu'}_{\pi'}}(\la,\sigma) \right\rangle_{HS} \\
&=& (2\pi)^{-2n} |\la|^{2n} \sum_{\sigma \in \widehat{K}}
d_{\sigma} \sum_{\beta, \gamma \in \mathbb{N}^n} \sum_{1 \leq l,m
\leq d_{\sigma}} \left\langle \rho_{\sigma}^{\la}(z,w,\tau,g)
\widehat{f^{\nu}_{\pi}}(\la,\sigma)(\phi_{\gamma}^{\la} \otimes
e_{l}^{\sigma}) , \phi_{\beta}^{\la} \otimes e_{m}^{\sigma}
\right\rangle \\ && \overline{\left\langle
\rho_{\sigma}^{\la}(z,w,\tau,g)
\widehat{f^{\nu'}_{\pi'}}(\la,\sigma)(\phi_{\gamma}^{\la} \otimes
e_{l}^{\sigma}) , \phi_{\beta}^{\la} \otimes e_{m}^{\sigma}
\right\rangle} \eeas \beas &=& \sum_{\sigma \in \widehat{K}}
d_{\sigma} \sum_{\beta, \gamma \in \mathbb{N}^n} \sum_{1 \leq l,m
\leq d_{\sigma}} e^{- 2 \la s} \sum_{p,q=1}^{d_{\nu}}
\phi_{pq}^{\nu}(e^{-iH}k^{-1}) \sum_{p',q'=1}^{d_{\nu'}}
\overline{\phi_{p'q'}^{\nu'}(e^{-iH}k^{-1})} \sum_{|\alpha| =
|\gamma|, |\alpha'| = |\gamma|} \\
&& \sum_{\delta, \delta' \in \mathbb{N}^n} \sum_{i,j=1}^{d_{\pi}}
\sum_{i',j'=1}^{d_{\pi'}} \left\langle (f^{\la})^{pq}_{ij},
\phi^{\la}_{\alpha \delta} \right\rangle \overline{\left\langle
(f^{\la})^{p'q'}_{i'j'},\phi^{\la}_{\alpha' \delta'}
\right\rangle} ~ \phi^{\la}_{\delta \beta}(z,w) ~ \overline{\phi^{\la}_{\delta' \beta}(z,w)} \\
&& \left( \int_K  \phi^{\sigma}_{ml}(k') \eta_{\alpha
\gamma}^{\la}(k') \phi_{ij}^{\pi}(e^{-iH}k^{-1}k') dk' \right)
\overline{\left( \int_K  \phi^{\sigma}_{ml}(k_1) \eta_{\alpha'
\gamma}^{\la}(k_1) \phi_{i'j'}^{\pi'}(e^{-iH}k^{-1}k_1) dk_1
\right)}. \eeas By Schur's orthogonality relations we get that the
above equals \beas && \sum_{\beta, \gamma \in \mathbb{N}^n} e^{- 2
\la s} \sum_{p,q=1}^{d_{\nu}} \phi_{pq}^{\nu}(e^{-iH}k^{-1})
\sum_{p',q'=1}^{d_{\nu'}}
\overline{\phi_{p'q'}^{\nu'}(e^{-iH}k^{-1})} \sum_{|\alpha| =
|\gamma|, |\alpha'| = |\gamma|} \sum_{\delta, \delta' \in
\mathbb{N}^n} \sum_{i,j=1}^{d_{\pi}} \sum_{i',j'=1}^{d_{\pi'}} \\
&& \left\langle (f^{\la})^{pq}_{ij}, \phi^{\la}_{\alpha \delta}
\right\rangle \overline{\left\langle
(f^{\la})^{p'q'}_{i'j'},\phi^{\la}_{\alpha' \delta'}
\right\rangle} ~ \phi^{\la}_{\delta \beta}(z,w) ~
\overline{\phi^{\la}_{\delta' \beta}(z,w)} \int_K \eta_{\alpha \gamma}^{\la}(k') \phi_{ij}^{\pi}(e^{-iH}k^{-1}k') \\
&& \overline{\eta_{\alpha' \gamma}^{\la}(k')
\phi_{i'j'}^{\pi'}(e^{-iH}k^{-1}k')} dk' \eeas By arguments
similar to (\ref{pr1}) we have that the above equals \beas &&
\sum_{\alpha, \beta \in \mathbb{N}^n} e^{- 2 \la s}
\sum_{p,q=1}^{d_{\nu}} \phi_{pq}^{\nu}(e^{-iH}k^{-1})
\sum_{p',q'=1}^{d_{\nu'}}
\overline{\phi_{p'q'}^{\nu'}(e^{-iH}k^{-1})} \sum_{\delta, \delta'
\in \mathbb{N}^n} \sum_{i,j=1}^{d_{\pi}} \sum_{i',j'=1}^{d_{\pi'}}
\left\langle (f^{\la})^{pq}_{ij}, \phi^{\la}_{\alpha \delta}
\right\rangle \\
&& \overline{\left\langle
(f^{\la})^{p'q'}_{i'j'},\phi^{\la}_{\alpha' \delta'}
\right\rangle} ~ \phi^{\la}_{\delta \beta}(z,w) ~
\overline{\phi^{\la}_{\delta' \beta}(z,w)} \int_K
\phi_{ij}^{\pi}(e^{-iH}k^{-1}k') \overline{
\phi_{i'j'}^{\pi'}(e^{-iH}k^{-1}k')} dk' \eeas \beas &=&
\sum_{\alpha, \beta \in \mathbb{N}^n} e^{- 2 \la s}
\sum_{p,q=1}^{d_{\nu}} \phi_{pq}^{\nu}(e^{-iH}k^{-1})
\sum_{p',q'=1}^{d_{\nu'}}
\overline{\phi_{p'q'}^{\nu'}(e^{-iH}k^{-1})} \sum_{\delta, \delta'
\in \mathbb{N}^n} \sum_{i,j=1}^{d_{\pi}} \sum_{i',j'=1}^{d_{\pi'}}
\left\langle (f^{\la})^{pq}_{ij}, \phi^{\la}_{\alpha \delta}
\right\rangle \\
&& \overline{\left\langle
(f^{\la})^{p'q'}_{i'j'},\phi^{\la}_{\alpha' \delta'}
\right\rangle} ~ \phi^{\la}_{\delta \beta}(z,w) ~
\overline{\phi^{\la}_{\delta' \beta}(z,w)} ~ \sum_{b=1}^{d_{\pi}}
\sum_{b'=1}^{d_{\pi'}} \phi_{ib}^{\pi}(e^{-iH}) \overline{
\phi_{i'b'}^{\pi'}(e^{-iH})} \\ && \int_K \phi_{bj}^{\pi}(k')
\overline{ \phi_{b'j'}^{\pi'}(k')} dk' \\ &=& 0 \txt{ if } \pi
\ncong \pi'. \eeas Assume $\pi \cong \pi'.$ Then \beas &&
(2\pi)^{-2n} |\la|^{2n} \sum_{\sigma \in \widehat{K}} d_{\sigma}
\int_K \left\langle \rho_{\sigma}^{\la}(z,w,\tau,ke^iH)
\widehat{f^{\nu}_{\pi}}(\la,\sigma),
\rho_{\sigma}^{\la}(z,w,\tau,ke^{iH})
\widehat{f^{\nu'}_{\pi}}(\la,\sigma) \right\rangle_{HS} dk
\\ &=& \sum_{\alpha, \beta \in \mathbb{N}^n} e^{- 2 \la s}
\sum_{p,q,r=1}^{d_{\nu}} \phi_{pr}^{\nu}(e^{-iH})
\sum_{p',q',r'=1}^{d_{\nu'}}
\overline{\phi_{p'r'}^{\nu'}(e^{-iH})} \int_K \phi_{rq}^{\nu}(k)
\overline{\phi_{r'q'}^{\nu'}(k)} dk \sum_{\delta, \delta' \in
\mathbb{N}^n} \sum_{i,j,i',j'=1}^{d_{\pi}} \\
&& \left\langle (f^{\la})^{pq}_{ij}, \phi^{\la}_{\alpha \delta}
\right\rangle \overline{\left\langle
(f^{\la})^{p'q'}_{i'j'},\phi^{\la}_{\alpha' \delta'}
\right\rangle} \phi^{\la}_{\delta \beta}(z,w) ~
\overline{\phi^{\la}_{\delta' \beta}(z,w)} \int_K
\phi_{ij}^{\pi}(e^{-iH}k') \overline{
\phi_{i'j'}^{\pi}(e^{-iH}k')} dk' \\ &=& 0 \txt{ if } \nu \ncong
\nu'. \eeas This proves the orthogonality of one part. On the
other hand, for $\pi, \nu, \pi', \nu' \in \widehat{K}$ and $X =
\left(x'-z,u'-w,t'-\tau-\frac{1}{2}(w \cdot x' - z \cdot
u')\right),$ we have \beas && \int_K f\left((z,w,\tau,g)^{-1}
(x',u',t',k')\right)
\overline{f\left((z,w,\tau,g)^{-1} (x',u',t',k')\right)} dk' \\
&=& \sum_{i,j=1}^{d_{\pi}} \sum_{i',j'=1}^{d_{\pi'}}
\sum_{p,q=1}^{d_{\nu}} \sum_{p',q'=1}^{d_{\nu'}} f^{pq}_{ij}(X)
\overline{f^{p'q'}_{i'j'}(X)} \phi_{pq}^{\nu}\left(e^{-iH}k^{-1}\right)
\overline{\phi_{p'q'}^{\nu'}\left(e^{-iH}k^{-1}\right)} \\
&& \sum_{b=1}^{\pi} \sum_{b'=1}^{\pi'}
\phi_{ib}^{\pi}\left(e^{-iH}\right)
\overline{\phi_{i'b'}^{\pi}\left(e^{-iH}\right)} \int_K
\phi_{bj}^{\pi}(k') \overline{\phi_{b'j'}^{\pi'}(k')} dk' \\ &=& 0
\txt{ if } \pi \ncong \pi'. \eeas Assume $\pi \cong \pi'.$ Then
\beas && \int_K \int_K f\left((z,w,\tau,ke^{iH})^{-1}
(x',u',t',k')\right) \overline{f\left((z,w,\tau,ke^{iH})^{-1} (x',u',t',k')\right)} dk' dk \\
&=& \sum_{i,j,i',j'=1}^{d_{\pi}} \sum_{p,q,b=1}^{d_{\nu}}
\sum_{p',q',b'=1}^{d_{\nu'}} f^{pq}_{ij}(X)
\overline{f^{p'q'}_{i'j'}(X)} \phi_{pb}^{\nu}(e^{-iH})
\overline{\phi_{p'b'}^{\nu'}(e^{-iH})} \int_K \phi_{bq}^{\nu}(k)
\overline{\phi_{b'q'}^{\nu'}(k)} dk \\
&& \int_K \phi_{ij}^{\pi}\left(e^{-iH} k'\right)
\overline{\phi_{i'j'}^{\pi}\left(e^{-iH} k'\right)} dk'
\\ &=& 0 \txt{ if } \nu \ncong \nu'. \eeas

\vspace{0.5 in}

\textbf{Acknowledgement.} Theorem \ref{Gutzmer} is adapted from an
unpublished work of Ms. Jotsaroop Kaur. The author wishes to thank
her for allowing the use of the same in this paper. The author is
also grateful to Dr. E. K. Narayanan for his constant
encouragement and for the many useful discussions during the
course of this work.

\vspace{0.3 in}

\end{document}